\numberwithin{equation}{section}
\definecolor{MyBlue}{cmyk}{1,0.13,0,0.63}
\definecolor{MyGreen}{cmyk}{0.91,0,0.88,0.52}
\definecolor{MyRed}{rgb}{.6,0,0}
\newcommand{\mylinkcolor}{MyBlue}
\newcommand{\mycitecolor}{MyGreen}
\newcommand{\myurlcolor}{MyRed}
\theoremstyle{plain}
\newtheorem{thm}{Theorem}[section]
\newtheorem{lem}[thm]{Lemma}
\newtheorem{prop}[thm]{Proposition}
\newtheorem{coro}[thm]{Corollary}
\theoremstyle{definition}
\newtheorem{defn}[thm]{Definition}
\newtheorem{remark}[thm]{Remark}
\newtheorem{example}[thm]{Example}
\newtheoremstyle{myAssumption}
  {\topsep}   % ABOVESPACE
  {\topsep}   % BELOWSPACE
  {\normalfont}  % BODYFONT
  {0pt}       % INDENT (empty value is the same as 0pt)
  {\bfseries} % HEADFONT
  {.}         % HEADPUNCT
  {5pt plus 1pt minus 1pt} % HEADSPACE
  {\thmname{#1}\thmnote{ #3}}          % CUSTOM-HEAD-SPEC, eg: \thmname{#1}\thmnumber{ #2}\thmnote{ (#3)}
\theoremstyle{myAssumption}
\newtheorem*{assumption*}{Assumption}
\newcommand{\customlabel}[2]{%
   \protected@write \@auxout {}{\string \newlabel {#1}{{#2}{\thepage}{#2}{#1}{}} }%
   \hypertarget{#1}{}
}
\renewcommand{\eqref}[1]{\labelcref{#1}}
\crefname{thm}{Theorem}{Theorems}
\crefname{lem}{Lemma}{Lemmas}
\crefname{prop}{Proposition}{Propositions}
\crefname{coro}{Corollary}{Corollaries}
\crefname{defn}{Definition}{Definitions}
\crefname{remark}{Remark}{Remarks}
\crefname{example}{Example}{Examples}
\def\@endtheorem{\endtrivlist}% NEW
\def\thm@space@setup{%
  \thm@preskip=4pt plus 2pt minus 2pt
  \thm@postskip=\thm@preskip
}
\renewenvironment{proof}[1][\proofname]{\par
  \pushQED{\qed}%
%  \normalfont \topsep6\p@\@plus6\p@\relax % OLD
  \normalfont \topsep4\p@\relax % NEW
  \trivlist
  \item[\hskip\labelsep
        \itshape
    #1\@addpunct{.}]\ignorespaces
}{%
  \popQED\endtrivlist\@endpefalse
}
\setlist{topsep=4pt plus 2pt minus 2pt,partopsep=0pt,itemsep=2pt plus 2pt minus 2pt,parsep=0.5\parskip}
\newcommand{\MR}[1]{}
\let\OLDthebibliography\thebibliography
\renewcommand\thebibliography[1]{
  \addcontentsline{toc}{section}{\refname}
  \OLDthebibliography{#1}
  \setlength{\parskip}{0pt}
  \setlength{\itemsep}{0pt plus 0.3ex}
}
\newcommand{\R}{\mathbb{R}}
\newcommand{\C}{\mathbb{C}}
\newcommand{\Z}{\mathbb{Z}}
\newcommand{\bD}{\mathbb{D}}
\newcommand{\A}{\mathcal{A}}
\newcommand{\B}{\mathcal{B}}
\newcommand{\E}{\mathcal{E}}
\newcommand{\mH}{\mathcal{H}}
\newcommand{\mL}{\mathcal{L}}
\newcommand{\mK}{\mathcal{K}}
\newcommand{\mN}{\mathcal{N}}
\newcommand{\D}{\mathcal{D}}
\newcommand{\pS}{\mathcal{S}}
\newcommand{\pT}{\mathcal{T}}
\newcommand{\pF}{\mathcal{F}}
\newcommand{\gradS}{\Gamma_\pS}
\newcommand{\gradF}{\Gamma_\pF}
\newcommand{\gradD}{\Gamma_\D}
\DeclareMathOperator{\Dom}{Dom}
\DeclareMathOperator{\Ran}{Ran}
\DeclareMathOperator{\Index}{Index}
\DeclareMathOperator{\ind}{ind}
\DeclareMathOperator{\relind}{rel-ind}
\DeclareMathOperator{\End}{End}
\DeclareMathOperator{\Ker}{Ker}
\DeclareMathOperator{\Id}{Id}
\DeclareMathOperator{\Span}{span}
\DeclareMathOperator{\supp}{supp}
\renewcommand{\bar}[1]{\overline{#1}}
\newcommand{\K}{\textnormal{K}}
\newcommand{\KK}{\textnormal{KK}}
\newcommand{\SF}{\textnormal{sf}}
\newcommand{\til}[1]{\widetilde{#1}}
\newcommand{\into}{\hookrightarrow}
\newcommand{\la}{\langle}
\newcommand{\ra}{\rangle}
\newcommand{\bF}{\mathtt{F}}
\newcommand{\mattwo}[4]{
  \begin{pmatrix}#1&#2\\ #3&#4\end{pmatrix}
}
\newcommand{\matfour}[4]{
  \begin{pmatrix}#1\\ #2\\ #3\\ #4\end{pmatrix}
}
\def\myTitle{Dirac--Schrödinger operators, index theory, and spectral flow}
\title{Dirac--Schrödinger operators,\\ index theory, and spectral flow}
\author{
Koen van den Dungen%
\footnote{Email: kdungen@uni-bonn.de}
\\[2mm]
{\small Mathematisches Institut}, 
{\small Universität Bonn}\\
{\small Endenicher Allee 60, D-53115 Bonn}
}
\date{}
\begin{document}

\maketitle

\begin{abstract}
\noindent
In this article we study generalised Dirac--Schrödinger operators in arbitrary signatures (with or without gradings), providing a general $\KK$-theoretic framework for the study of index pairings and spectral flow. 
We provide a general Callias Theorem, which shows that the index (or the spectral flow, or abstractly the $\K$-theory class) of Dirac--Schrödinger operators can be computed on a suitable compact hypersurface. 
Furthermore, if the zero eigenvalue is isolated in the spectrum of the Dirac operator, we relate the index (or spectral flow) of Dirac--Schrödinger operators to the index (or spectral flow) of corresponding Toeplitz operators. 
Combining both results, we obtain an index (or spectral flow) equality relating Toeplitz operators on the noncompact manifold to Toeplitz operators on the compact hypersurface. 
Our results generalise various known results from the literature, while presenting these results in a common unified framework. 

\vspace{\baselineskip}
\noindent
\emph{Keywords}: 
Dirac--Schrödinger operators, 
index theory, 
spectral flow, 
Callias Theorem, 
Toeplitz operators, 
$\KK$-theory. 

\noindent
\emph{Mathematics Subject Classification 2020}: 
19K35, % Kasparov theory ($KK$-theory)
19K56, % Index theory
58J20, % Index theory and related fixed point theorems
58J30. % Spectral flows
\end{abstract}

\section{Introduction}

A Dirac--Schrödinger operator on a (typically noncompact, complete) Riemannian manifold $M$ consists of a Dirac-type operator $\D$ together with a \emph{potential} $\pS$. 
Classically, the potential is given by a self-adjoint endomorphism on some auxiliary vector bundle (of finite rank) over $M$. 
Most commonly, one considers an \emph{odd}-dimensional manifold $M$ (for which the Dirac operator is \emph{ungraded}) along with an \emph{ungraded} potential. 
Nevertheless, some results have appeared in the literature also for even-dimensional manifolds (for which the Dirac operator is naturally \emph{$\Z_2$-graded}). 
Altogether, we may distinguish between the following four cases, labelled by their signature $(p,q)$ with $p,q\in\{0,1\}$:
\begin{itemize}
\item the odd-odd signature $\boldsymbol{(p,q)=(1,1)}$: $\pS$ and $\D$ are both ungraded;
\item the even-even signature $\boldsymbol{(p,q)=(0,0)}$: $\pS$ and $\D$ are both $\Z_2$-graded;
\item the even-odd signature $\boldsymbol{(p,q)=(0,1)}$: $\pS$ is $\Z_2$-graded and $\D$ is ungraded;
\item the odd-even signature $\boldsymbol{(p,q)=(1,0)}$: $\pS$ is ungraded and $\D$ is $\Z_2$-graded. 
\end{itemize}
In each of these signatures, we will denote the corresponding Dirac--Schrödinger operator by $\pS\times\D$; its precise construction depends on the signature $(p,q)$ and will be described in \cref{defn:gen_DS} below. 

In this paper, our aim is to prove the following three types of results on Dirac--Schrödinger operators, in a rather general setting, and in particular for each of the four possible signatures $(p,q)$: 
\begin{description}
\item[Pairings:] 
A Dirac--Schrödinger operator $\pS\times\D$ is Fredholm, and it represents (via its index, or via its spectral flow, or abstractly via its class in $\K$-theory) the pairing of the $\K$-theory class of the potential $\pS$ with the $\K$-homology class of $\D$. 
\item[Callias Theorems:] 
The index (or spectral flow, or class in $\K$-theory) of a Dirac--Schrö\-dinger operator can be computed on a suitable compact hypersurface. 
\item[Toeplitz Theorems:] The Dirac--Schrödinger operator $\pS\times\D$ and its associated Toeplitz operator, obtained by compressing the potential $\pS$ to the kernel of $\D$, have the same index (or the same spectral flow, or the same class in $\K$-theory). 
\end{description}
Such results have previously appeared in various forms in the literature, and in various of the four signatures described above. 
For instance, Bunke \cite{Bun95} first showed that Dirac--Schrödinger operators represent pairings of $\K$-theory with $\K$-homology in both the odd-odd signature and the even-even signature. The pairing in the odd-odd signature was also generalised to potentials consisting of families of \emph{unbounded} self-adjoint operators by Kaad and Lesch \cite[\S8]{KL13} (see also \cite{vdD19_Index_DS}). 
Abstract versions of these pairings (on spectral triples) in the odd-odd and even-even signatures have recently been studied in \cite{SS23}. 

The classical Callias Theorem is well-established in the odd-odd signature \cite{Cal78,Ang90,BM92,Ang93a,Rad94,Bun95,Kuc01,Gesztesy-Waurick16}, 
and a generalisation for potentials consisting of families of \emph{unbounded} operators is given in \cite{vdD25_Callias}. 
An analogue of the Callias Theorem in the even-even signature was described by Bunke \cite[\S2.4]{Bun95}. 

Toeplitz theorems were given in the even-even signature for the \emph{index} of Toeplitz operators \cite{GH96,Bun00} and in the odd-even signature for the \emph{spectral flow} of a family of Toeplitz operators by Braverman \cite{Bra19}, with applications to topological insulators and the bulk-edge correspondence. 

Our main goal in this paper is thus to present a single unified framework, which not only neatly incorporates all of the aforementioned results, but also extends these results to a more general setting and to all four signatures. 
In particular, we mention here a few new results that emerge from our general framework:
\begin{itemize}
\item 
We provide a Callias Theorem for the even-odd and odd-even signatures, which are particularly interesting in the context of spectral flow. 
\item 
We prove a new Toeplitz index theorem (in the even-even signature) as well as a Toeplitz spectral flow theorem (in the odd-even signature), relating the index or spectral flow of Toeplitz operators on a noncompact manifold with Toeplitz operators on a compact hypersurface. 
\end{itemize}

Let us briefly introduce our framework and summarise our main results. 
Let $M$ be a connected Riemannian manifold, and let $\D$ be an essentially self-adjoint elliptic first-order differential operator on $M$ (for instance, a Dirac-type operator).
Moreover, we fix a $C^*$-algebra $A$, and we consider a family of regular self-adjoint operators $\{\pS(x)\}_{x\in M}$ with compact resolvents and with common domain on a Hilbert $C^*$-module $E$ over $A$, such that $\{\pS(x)\}_{x\in M}$ is uniformly invertible outside a compact subset $K\subset M$ (see assumption \ref{ass:A} in \cref{sec:gen_DS}). 
Furthermore, we assume that $\pS(\cdot)$ is weakly differentiable and that $\big[\D,\pS(\cdot)\big] \big(\pS(\cdot)\pm i\big)^{-1}$ is well-defined and bounded (see assumption \ref{ass:B} in \S\ref{sec:gen_DS_Fred}). 
We provide in \cref{sec:gen_DS} a generalised construction of a \emph{Dirac--Schrödinger operator} $\pS\times\D$ in each of the four signatures $(p,q)$. 
We continue in \S\ref{sec:rel_ind_thm} to generalise the $\K$-theoretic relative index theorem \cite[Theorem 4.1]{vdD25_Callias} (which goes back to \cite{Bun95}) to arbitrary signatures. 
In \S\ref{sec:gen_DS_Fred} and \S\ref{sec:gen_DS_Kasp_prod} we prove that the Dirac--Schrödinger operator $\pS\times\D$ is Fredholm, and that it represents the $\K_{p+q}(A)$-valued pairing of the $\K$-theory class of the potential $\pS(\cdot)$ with the $\K$-homology class of $\D$: 
\[
\begin{array}{rclcl}
\K_p\big(C_0(M,A)\big) &\times& \K^q\big(C_0(M)\big) &\to& \K_{p+q}(A) , \\
{}[\pS(\cdot)] &\times& [\D] &\mapsto& [\pS\times\D] . 
\end{array}
\]
Most of the literature on this topic considers the case $A=\C$. Then, for the odd-odd and even-even signatures, the class $[\pS\times\D] \in \K_0(\C)$ corresponds to the Fredholm index of $(\pS\times\D)_+$ in $\K_0(\C) \simeq \Z$. 
For the odd-even and even-odd signatures, for which $p+q=1$, the pairing $\K_p(C_0(M)) \times \K^q(C_0(M)) \to \K_1(\C) = \{0\}$ is of course trivial. 
However, in these signatures one can still obtain a nontrivial pairing by considering instead the $C^*$-algebra $A=C(S^1)$, and this is in fact one of our main reasons for including this auxiliary $C^*$-algebra $A$. 
For instance, the setting under consideration in \cite{Bra19} takes $A=C(S^1)$ with the odd-even signature $(p,q)=(1,0)$, and the pairing then corresponds to the \emph{spectral flow} of the family of Dirac--Schrödinger operators $\{\pS_t\times\D\}_{t\in S^1}$ on $M$: 
\begin{equation*}
\begin{array}{rclcl}
\K_1(C_0(M\times S^1)) &\times& \K^0(C_0(M)) &\to& \K_1(C(S^1)) \simeq \Z , \\
{}\big[\{\pS_t(\cdot)\}_{t\in S^1}\big] &\times& [\D] &\mapsto& \SF\left( \{\pS_t\times\D\}_{t\in S^1} \right) .
\end{array}
\end{equation*}
A similar approach would be possible also in the even-odd signature (though the author is not aware of this case being present in the literature). 

Next, in \cref{sec:gen_Callias} we provide a Callias theorem, generalising the main result of \cite{vdD25_Callias} to arbitrary signatures. 
We recall that the potential $\pS(\cdot)$ is assumed to be invertible outside of a compact subset $K\subset M$; we now assume that $K$ has a smooth boundary $N = \partial K$. 
The \emph{Callias theorem} (\cref{thm:Callias} below) then states that, under suitable assumptions, the class of the Dirac--Schrödinger operator can be computed from a pairing on the compact hypersurface $N$:
\begin{align*}
[\pS\times\D]
= \relind_{p+1}\big(P_+(\pS_N(\cdot)),P_+(\pT(\cdot))\big) \otimes_{C(N)} [\D_N] \in \K_{p+q}(A) .
\end{align*}
Here $\relind_{p+1}$ denotes the \emph{even} (if $p+1=0$) or \emph{odd} (if $p+1=1$) relative index of two projections. 
The even relative index is the `usual' relative index of pairs of projections. 
The case $p=0$ requires us to consider also the \emph{odd} relative index, as described in \cite[\S8.1]{Wah07} and briefly recalled in \S\ref{sec:rel-ind}. 

In \S\ref{sec:classical_Callias}, we specialise to the case where $A$ is unital and $E$ is finitely generated and projective. 
To emphasise this restriction, we will denote the potential by $\pF$ instead of $\pS$. 
In the odd-odd signature, the Callias Theorem yields a pairing between the even $\K$-theory $\K_0(C(N,A))$ and the even $\K$-homology $\K^0(C(N))$, and we will show that, as in the classical Callias Theorem, the index of the Dirac--Schrödinger operator can be computed via the \emph{even index pairing} on $N$ (see \cref{thm:11_index}):
\[
[\pF\times\D] 
= \big[ P_+(\pF_N) \big] \otimes_{C(N)} [\D_N] 
= \Index\big( P_+(\pF_N) \D_N P_+(\pF_N) \big) \in \K_0(A) .
\]
Furthermore, in the even-even signature, we have instead a pairing between the odd $\K$-theory $\K_1(C(N,A))$ and odd $\K$-homology $\K^1(C(N))$, which can now be computed in terms of the \emph{odd index pairing} or the \emph{spectral flow pairing} on $N$ (see \cref{thm:00_sf}, generalising a result from \cite[Theorem 2.16]{Bun95}). 
For nontrivial $A$, we can also consider the odd-even pairing taking values in $\K_1(A)$. In particular, if $A = C(S^1,B)$ for some unital $C^*$-algebra $B$, then the odd-even pairing can be interpreted as a spectral flow taking values in $\K_0(B)$ (see \cref{coro:10_sf}). 

As a consequence of the Callias Theorem, one can formulate the cobordism invariance of the index, as we describe in \S\ref{sec:cobordism}. 
Although one might not really need our generalisation of the Callias Theorem to prove cobordism invariance, it is nevertheless an advantage of our approach, considering all possible signatures simultaneously, that we can easily formulate cobordism invariance results also in other signatures. As an example, we present the cobordism invariance of the spectral flow pairing in \cref{coro:cobordism_sf}. 

In \cref{sec:Toeplitz}, we continue with the special case where $A$ is unital and $E$ is finitely generated and projective, and we again denote the potential by $\pF$ (instead of $\pS$). 
Our main additional assumption is now that zero is an isolated point of the spectrum of $\D$. 
We can then consider the \emph{Toeplitz operator}, given by the potential $\pF$ compressed to the kernel of $\D$: 
\[
T_\pF = P_0 \circ (\pF\times\D) \circ P_0 ,
\]
where $P_0$ denotes the projection onto the kernel of $\D$. 
Our final main result is that the Dirac--Schrödinger operator $\pF\times\D$ and its corresponding Toeplitz operator $T_\pF$ represent the same class in $\K$-theory: 
\[
[T_\pF] = [\pF\times\D] \in \K_{p+q}(A) .
\]
We will see that the assumption on $\D$ implies that this class is in fact trivial in the case $q=1$. 
However, as described in \S\ref{sec:Toeplitz_00} and \S\ref{sec:Toeplitz_10}, we obtain a nontrivial index theorem in the even-even signature $(p,q)=(0,0)$ (generalising results from \cite{GH96,Bun00}, where $A=\C$), as well as a nontrivial spectral flow theorem in the odd-even signature $(p,q)=(1,0)$ (generalising a result from \cite{Bra19}, where $A=C(S^1)$). 

In \S\ref{sec:Toeplitz_Callias} we combine our results on Toeplitz operators with the Callias Theorem. 
This allows us to obtain equalities for the index (or spectral flow) of Toeplitz operators on the noncompact manifold $M$ and on the compact hypersurface $N$. 
In the even-even signature $(p,q)=(0,0)$ we obtain the following \emph{index theorem for Toeplitz operators} (see \cref{coro:Toeplitz_index}):
\[
\Index\big( P_0(\D_+) \pF_+ P_0(\D_+) \big) - \Index\big( P_0(\D_-) \pF_+ P_0(\D_-) \big)
= \Index\big( P_+(\D_N) {\pF_+}|_N P_+(\D_N) \big) . 
\]
Similarly, in the odd-even signature $(p,q)=(1,0)$ we obtain a \emph{spectral flow theorem for Toeplitz operators} (see \cref{coro:Toeplitz_sf}), relating families of Toeplitz operators on $M$ and on the hypersurface $N$:
\begin{multline*}
\SF\big( \big\{P_0(\D_+) \pF_t P_0(\D_+) \big\}_{t\in S^1} \big) - \SF\big( \big\{P_0(\D_-) \pF_t P_0(\D_-) \big\}_{t\in S^1} \big) \\
= \SF\big( \big\{ P_+(\pF_N(t)) \D_N P_+(\pF_N(t)) \big\}_{t\in S^1} \big) .
\end{multline*}
We illustrate both these results by the example of the Dolbeault--Dirac operator on a strongly pseudoconvex domain in \S\ref{sec:pseudoconvex}. 

Finally, Appendix \ref{sec:prelim} collects several facts regarding Fredholm operators on Hilbert modules, recalling in particular the definitions and properties of the (even or odd) relative index of a pair of projections and of the (even or odd) spectral flow of a path of Fredholm operators. 
Appendix \ref{sec:index_pairings} describes the pairing (via the Kasparov product) of $\K$-theory with $\K$-homology in the presence of an auxiliary $C^*$-algebra $A$. In particular, we adapt the well-known descriptions of the even and odd index pairings in order to also describe the even-odd and odd-even pairings (which may be nonzero in the presence of a nontrivial $C^*$-algebra $A$).

\subsection*{Notation}

Throughout this paper, let $A$ be a $\sigma$-unital $C^*$-algebra, and let $E$ be a (possibly $\Z_2$-graded) countably generated Hilbert $C^*$-module over $A$ (or Hilbert $A$-module for short) with $A$-valued inner product $\la\cdot|\cdot\ra$. 
(The reader unfamiliar with $C^*$-modules may consider the special case $A=\C$, so that $E$ is simply a separable Hilbert space. For an introduction to Hilbert $C^*$-modules, we refer to \cite{Lance95}.) 
The norm of $\psi\in E$ is given by $\|\psi\| = \|\la\psi|\psi\ra\|^{\frac12}$. 

The space of adjointable linear operators $E\to E$ is denoted by $\mL_A(E)$.
For any $\psi,\eta\in E$, the rank-one operators $\theta_{\psi,\eta}$ are defined by $\theta_{\psi,\eta}\xi := \psi \la\eta|\xi\ra$ for $\xi\in E$. 
The compact endomorphisms $\mK_A(E)$ are given by the closure of the space of finite linear combinations of rank-one operators. 
For two Hilbert $A$-modules $E_1$ and $E_2$, the adjointable linear operators $E_1\to E_2$ are denoted by $\mL_A(E_1,E_2)$. 

A densely defined operator $S$ is called \emph{regular} if $S$ is closed, the adjoint $S^*$ is densely defined, and $1+S^*S$ has dense range (note that on a Hilbert space, every closed operator is regular). 
A densely defined, closed, symmetric operator $S$ is regular and self-adjoint if and only if the operators $S\pm i$ are surjective \cite[Lemma 9.8]{Lance95}.

\subsection*{Acknowledgements}

The author thanks Matthias Lesch for interesting discussions and helpful suggestions. 
The author acknowledges financial support from the Hausdorff Center for Mathematics in Bonn, funded by the Deutsche Forschungsgemeinschaft (DFG, German Research Foundation) under Germany's Excellence Strategy -- EXC-2047/1 -- 390685813.

\section{Dirac--Schrödinger operators}
\label{sec:gen_DS}

In this section, we introduce our generalised construction of Dirac--Schrödinger operators, following the definitions of \cite[\S3.1]{vdD25_Callias}, but allowing for arbitrary signatures and $\Z_2$-gradings (the setting of \cite{vdD25_Callias} corresponds to the odd-odd signature described below). 

\begin{assumption*}[(A)]
\customlabel{ass:A}{(A)}
Let $A$ be a (trivially graded) $\sigma$-unital $C^*$-algebra, and let $E$ be a countably generated Hilbert $A$-module. 
Let $M$ be a connected Riemannian manifold (typically noncompact), and let $\D$ be an essentially self-adjoint elliptic first-order differential operator on a hermitian vector bundle $\bF\to M$.
Let $\{\pS(x)\}_{x\in M}$ be a family of regular self-adjoint operators with common domain on $E$ satisfying the assumptions
\begin{itemize}
\item[(A1)]
\customlabel{ass:A1}{(A1)}
The domain $W := \Dom\pS(x)$ is independent of $x\in M$, and the inclusion $W\into E$ is a compact endomorphism between Hilbert $A$-modules (where $W$ is viewed as a Hilbert $A$-module equipped with the graph norm of $\pS(x_0)$, for some $x_0\in M$). 
\item[(A2)]
\customlabel{ass:A2}{(A2)}
The map $\pS\colon M\to\mL_A(W,E)$ is norm-continuous. 
\item[(A3)]
\customlabel{ass:A3}{(A3)}
There is a compact subset $K\subset M$ such that $\pS(x)$ is uniformly invertible on $M\setminus K$. 
\end{itemize}
Furthermore, let $\gradS$ be a self-adjoint unitary operator on the Hilbert $A$-module $E$ (preserving the domain of $\pS(\cdot)$), and let $\gradD$ be a self-adjoint unitary smooth endomorphism on the bundle $\bF\to M$. 
We consider the following four different cases, labelled by their \emph{signatures} $(p,q) \in \Z_2\times\Z_2$:
\begin{description}
\item[the odd-odd signature $\boldsymbol{(p,q)=(1,1)}$:] $\gradS=\Id_E$ and $\gradD=\Id_\bF$; 
\item[the even-even signature $\boldsymbol{(p,q)=(0,0)}$:] $\pS(x)$ anti-commutes with $\gradS$ (for all $x\in M$), and $\D$ anti-commutes with $\gradD$; 
\item[the odd-even signature $\boldsymbol{(p,q)=(1,0)}$:] $\gradS=\Id_E$, and $\D$ anti-commutes with $\gradD$; 
\item[the even-odd signature $\boldsymbol{(p,q)=(0,1)}$:] $\pS(x)$ anti-commutes with $\gradS$ (for all $x\in M$), and $\gradD=\Id_\bF$. 
\end{description}
\end{assumption*}
By convention, we will always compute $p$ and $q$ modulo $2$ (e.g., if $p=1$, then $p+1\equiv0$). 
In the following, we will consider all four signatures $(p,q)$ simultaneously. 

Given the family of operators $\{\pS(x)\}_{x\in M}$ on $E$, we obtain a closed symmetric operator $\pS(\cdot)$ on $C_0(M,E)$, which is defined as the closure of the operator $\big(\pS(\cdot)\psi\big)(x) := \pS(x) \psi(x)$ on the initial dense domain $C_c(M,W)$. 
The operator $\pS(\cdot)$ on the Hilbert $C_0(M,A)$-module $C_0(M,E)$ is regular self-adjoint and Fredholm by \cite[Proposition 3.4]{vdD19_Index_DS}, so by \cref{prop:Fred_KK} it defines a class 
\[
[\pS(\cdot)] \in \KK^p(\C,C_0(M,A)) \simeq \K_p(C_0(M,A)) .
\]
Furthermore, since $\D$ is an essentially self-adjoint first-order differential operator, and since the ellipticity of $\D$ ensures that $\D$ also has locally compact resolvents \cite[Proposition 10.5.2]{Higson-Roe00}, we know that $(C_0^1(M), L^2(M,\bF), \D)$ is an (even or odd) spectral triple, which represents a $\K$-homology class 
\[
[\D] \in \KK^q(C_0(M),\C) \equiv \K^q(C_0(M)) . 
\]

We consider the balanced tensor product $L^2(M,E\otimes\bF) := C_0(M,E) \otimes_{C_0(M)} L^2(M,\bF)$. 
The operator $\pS(\cdot)\otimes1$ is well-defined on $\Dom\pS(\cdot) \otimes_{C_0(M)} L^2(M,\bF) \subset L^2(M,E\otimes\bF)$, and is denoted simply by $\pS(\cdot)$ as well. By \cite[Proposition 9.10]{Lance95}, $\pS(\cdot)$ is regular self-adjoint on $L^2(M,E\otimes\bF)$. 

The operator $1\otimes\D$ is not well-defined on $L^2(M,E\otimes\bF)$. Instead, using the canonical isomorphism $L^2(M,E\otimes\bF) \simeq E \otimes L^2(M,\bF)$, we consider the operator $1\otimes\D$ (or $\gradS\otimes\D$, if $(p,q)=(0,0)$) on $E \otimes L^2(M,\bF)$ with domain $E\otimes\Dom\D$. 
Alternatively, we can extend the exterior derivative on $C_0^1(M)$ to an operator 
\[
d \colon C_0^1(M,E) \xrightarrow{\simeq} E\otimes C_0^1(M) \xrightarrow{1\otimes d} E\otimes\Gamma_0(T^*M) \xrightarrow{\simeq} \Gamma_0(E\otimes T^*M) .
\]
Denoting by $\sigma$ the principal symbol of $\D$, we can define an operator $1\otimes_d\D$ on the Hilbert space $C_0(M,E) \otimes_{C_0(M)} L^2(M,\bF)$ by setting
\[
(1\otimes_d\D)(\xi\otimes\psi) := 
\begin{cases}
\gradS\xi\otimes\D\psi + (\gradS\otimes\sigma)(d\xi)\psi , & \quad \text{if } (p,q)=(0,0) , \\
\xi\otimes\D\psi + (1\otimes\sigma)(d\xi)\psi , & \quad \text{otherwise} .
\end{cases}
\]
We note that the case $(p,q)=(0,0)$ corresponds to the \emph{graded} tensor product. 
Under the isomorphism $C_0(M,E) \otimes_{C_0(M)} L^2(M,\bF) \simeq E \otimes L^2(M,\bF)$, the operator $1\otimes\D$ (or $\gradS\otimes\D$, if $(p,q)=(0,0)$) on $E \otimes L^2(M,\bF)$ agrees with $1\otimes_d\D$ on $C_0(M,E) \otimes_{C_0(M)} L^2(M,\bF)$. We will denote this operator on $L^2(M,E\otimes\bF)$ simply as $\D$. The operator $\D$ is regular self-adjoint on $L^2(M,E\otimes\bF)$ (see also \cite[Theorem 5.4]{KL13}). 

Finally, we also consider on $L^2(M,E\otimes\bF)$ the self-adjoint unitaries 
\[
\gradS \equiv \gradS\otimes1
\quad\text{and}\quad 
\gradD \equiv 1\otimes\gradD . 
\]
We define the Hilbert $A$-module $\til E$ as 
\begin{equation}
\label{eq:tilE}
\til E := 
\begin{cases}
L^2(M,E\otimes\bF)^{\oplus2} , \quad&\text{if } (p,q)=(1,1) , \\ 
L^2(M,E\otimes\bF) , \quad&\text{otherwise.}
\end{cases}
\end{equation}
If $p=0$ (so that $\gradS$ is non-trivial), we decompose $L^2(M,E\otimes\bF) = L^2(M,E_+\otimes\bF) \oplus L^2(M,E_-\otimes\bF)$ according to the eigenspaces $E_+$ and $E_-$ of $\gradS$ with eigenvalues $+1$ and $-1$, respectively, and with respect to this decomposition we will write 
\[
\pS(\cdot) = \mattwo{0}{\pS_-(\cdot)}{\pS_+(\cdot)}{0} 
\quad\text{and}\quad 
\D = \mattwo{\D|_{E_+}}{0}{0}{\D|_{E_-}} .
\]
Similarly, if $q=0$ (so $\gradD$ is non-trivial), we decompose $L^2(M,E\otimes\bF) = L^2(M,E\otimes\bF_+) \oplus L^2(M,E\otimes\bF_-)$ and write 
\[
\pS(\cdot) = \mattwo{\pS(\cdot)|_{\bF_+}}{0}{0}{\pS(\cdot)|_{\bF_-}} 
\quad\text{and}\quad 
\D = \mattwo{0}{\D_-}{\D_+}{0} .
\]

\begin{defn}
\label{defn:gen_DS}
Consider $M$, $\D$, and $\pS(\cdot)$ satisfying assumption \ref{ass:A}. 
In each of the four possible signatures, we define the \emph{product operator} $\pS \times_{p,q} \D$ on $\til E$ as follows. 
\begin{description}
\item[the odd-odd signature $\boldsymbol{(p,q)=(1,1)}$:] 
We define 
\begin{align*}
\pS \times_{1,1} \D &:= \mattwo{0}{\D+i\pS(\cdot)}{\D-i\pS(\cdot)}{0} ,
\end{align*}
on the initial domain $\big(C_c^1(M,\Dom\pS(x_0)) \otimes_{C_0^1(M)} \Dom\D\big)^{\oplus2}$. 
The operator $\pS \times_{1,1} \D$ is odd with respect to the $\Z_2$-grading $\Gamma := \mattwo{1}{0}{0}{-1}$. 

\item[the even-even signature $\boldsymbol{(p,q)=(0,0)}$:] 
We define 
\begin{align*}
\pS \times_{0,0} \D &:= \D + \pS(\cdot) ,
\end{align*}
on the initial domain $C_c^1(M,\Dom\pS(x_0)) \otimes_{C_0^1(M)} \Dom\D$. 
The operator $\pS \times_{0,0} \D$ is odd with respect to the $\Z_2$-grading $\Gamma := \gradS\otimes\gradD$. 

\item[the odd-even signature $\boldsymbol{(p,q)=(1,0)}$:] 
We define (cf.\ \cite[Example 2.38]{BMS16})
\begin{align*}
\pS \times_{1,0} \D &:= \D + \gradD \pS(\cdot) = \mattwo{\pS(\cdot)|_{\bF_+}}{\D_-}{\D_+}{-\pS(\cdot)|_{\bF_-}} ,
\end{align*}
on the initial domain $C_c^1(M,\Dom\pS(x_0)) \otimes_{C_0^1(M)} \Dom\D$. 
The operator $\pS \times_{1,0} \D$ is ungraded. 

\item[the even-odd signature $\boldsymbol{(p,q)=(0,1)}$:] 
We define (cf.\ \cite[Example 2.37]{BMS16})
\begin{align*}
\pS \times_{0,1} \D &:= \gradS\D + \pS(\cdot) = \mattwo{\D|_{E_+}}{\pS_-(\cdot)}{\pS_+(\cdot)}{-\D|_{E_-}} ,
\end{align*}
on the initial domain $C_c^1(M,\Dom\pS(x_0)) \otimes_{C_0^1(M)} \Dom\D$. 
The operator $\pS \times_{0,1} \D$ is ungraded. 
\end{description}
For arbitrary degrees $(p,q)$, we will often denote the product operator $\pS \times_{p,q} \D$ simply as $\pS \times \D$. 
The product operator is symmetric (hence closable), and (with slight abuse of notation) we will denote the closure simply by $\pS \times \D$ as well. 

The product operator $\pS \times \D$ is called a \emph{Dirac--Schrödinger operator} if $\pS \times \D$ is regular, self-adjoint, and Fredholm. 
In this case, it yields a class (\cref{prop:Fred_KK}) 
\[
[\pS \times \D] \in \KK^{p+q}(\C,A) .
\] 
\end{defn}
We note that, despite our use of the term `Dirac--Schrödinger' operator, we do not assume that the operator $\D$ is of Dirac-type (although a Dirac-type operator is of course a typical example). Furthermore, we note that regularity, self-adjointness, and the Fredholm property of $\pS \times \D$ do not follow automatically from assumption \ref{ass:A}.

\subsection{Relative index theorem}
\label{sec:rel_ind_thm}

The $\K$-theoretic relative index theorem, going back to Bunke \cite{Bun95}, is a `cutting and pasting' tool which is very useful for computing the index (or $\K$-theory class) of a Dirac--Schrödinger operator. 
For $j=1,2$, let $\bF_j\to M_j$, $\D_j$, and $\pS_j(\cdot)$ be as in assumption \ref{ass:A}, 
and assume that the operators $\{\pS_j(x)\}_{x\in M_j}$ act on the same Hilbert $A$-module $E$. 
Suppose we have partitions $M_j = \bar U_j \cup_{N_j} \bar V_j$, where $N_j$ are smooth compact hypersurfaces. 
Let $C_j$ be open tubular neighbourhoods of $N_j$, and assume that there exists an isometry $\phi\colon C_1\to C_2$ (with $\phi(N_1)=N_2$) covered by an isomorphism $\Phi\colon\bF_1|_{C_1} \to \bF_2|_{C_2}$, such that $\D_1|_{C_1} \Phi^* = \Phi^* \D_2|_{C_2}$ and $\pS_2(\phi(x)) = \pS_1(x)$ for all $x\in C_1$. 
We will identify $C_1$ with $C_2$ (as well as $N_1$ with $N_2$) via $\phi$, and we simply write $C$ (and $N$). Define two new Riemannian manifolds 
\begin{align*}
M_3 &:= \bar U_1 \cup_N \bar V_2 , & 
M_4 &:= \bar U_2 \cup_N \bar V_1 .
\end{align*}
Moreover, we glue the bundles using $\Phi$ to obtain hermitian vector bundles $\bF_3\to M_3$ and $\bF_4\to M_4$. For $j=3,4$, we then obtain corresponding operators $\D_j$ and $\pS_j(\cdot)$ satisfying assumption \ref{ass:A}. 
The following statement generalises the relative index theorem of \cite[Theorem 4.1]{vdD25_Callias} to arbitrary signatures $(p,q)$. 
\begin{thm}[Relative index theorem]
\label{thm:rel_index}
Assume that $\pS_1 \times \D_1$ and $\pS_2 \times \D_2$ are Dirac--Schrödinger operators with locally compact resolvents. 
Then $\pS_3 \times \D_3$ and $\pS_4 \times \D_4$ are also Dirac--Schrödinger operators with locally compact resolvents. 
Moreover, we have the equality 
\[
[\pS_1 \times \D_1]+[\pS_2 \times \D_2]=[\pS_3 \times \D_3]+[\pS_4 \times \D_4] \in \KK^{p+q}(\C,A) . 
\]
\end{thm}
\begin{proof}
The proof is very similar to the proofs of \cite[Theorem 4.7]{vdD19_Index_DS} and \cite[Theorem 4.1]{vdD25_Callias}, and therefore we will skip most of the details. 

First, we need to check that $\pS_3 \times \D_3$ and $\pS_4 \times \D_4$ are also regular self-adjoint and Fredholm with locally compact resolvents. This is proven in \cite[Theorem 4.1]{vdD25_Callias} for the signature $(p,q)=(1,1)$. However, in the three other signatures the same proof follows through (taking $\til\D^j:=\D^j$). 

Second, we need to prove that $[\pS_1 \times \D_1]+[\pS_2 \times \D_2]=[\pS_3 \times \D_3]+[\pS_4 \times \D_4]$. 
The proof is given in \cite[Theorem 4.7]{vdD19_Index_DS} for the signature $(p,q)=(1,1)$ (and essentially the same proof works for $(p,q)=(0,0)$). 
We will adapt the proof such that it works in all signatures $(p,q)$. 
For $j=1,\ldots,4$, we write $\til E_j := L^2(M_j,E\otimes\bF_j)^{\oplus2}$ if $(p,q)=(1,1)$ and $\til E_j := L^2(M_j,E\otimes\bF_j)$ otherwise (as in \cref{eq:tilE}). 
If $p+q\equiv0$, each $\til E_j$ is $\Z_2$-graded with grading operator $\Gamma_j$. 
We consider the Hilbert $A$-module $\til E_1\oplus \til E_2\oplus \til E_3\oplus \til E_4$, which we equip with the grading operator $\Gamma_1\oplus\Gamma_2\oplus(-\Gamma_3)\oplus(-\Gamma_4)$ if $p+q\equiv0$. 
We consider the operator 
\[
T := \big( \pS_1 \times \D_1 \big) \oplus \big( \pS_2 \times \D_2 \big) \oplus - \big( \pS_3 \times \D_3 \big) \oplus {-\big(\pS_4 \times \D_4 \big)} . 
\]
Since $[T] = \big[ \pS_1 \times \D_1 \big] + \big[ \pS_2 \times \D_2 \big] - \big[ \pS_3 \times \D_3 \big] - \big[ \pS_4 \times \D_4 \big] \in \KK^{p+q}(\C,A)$, we need to show that $[T]=0$. 
With the maps $\alpha_{ij}$ defined as in \cite[Proof of Theorem 4.7]{vdD19_Index_DS}, we now define (instead of $X$) the operator 
\[
\gamma := \matfour{0&0&\alpha_{13}^*&\alpha_{14}^*}{0&0&\alpha_{23}^*&-\alpha_{24}^*}{\alpha_{13}&\alpha_{23}&0&0}{\alpha_{14}&-\alpha_{24}&0&0} .
\]
Then $\gamma=\gamma^*$, $\gamma^2=1$, and if $p+q\equiv0$, $\gamma$ is odd. 
Moreover, we compute 
\[
T \gamma + \gamma T 
= \matfour{0&0&[\til\D_1,\chi_{U_1}]&[\til\D_1,\chi_{V_1}]}{0&0&[\til\D_2,\chi_{V_2}]&[-\til\D_2,\chi_{U_2}]}{{-[\til\D_1,\chi_{U_1}]}&-[\til\D_2,\chi_{V_2}]&0&0}{{-[\til\D_1,\chi_{V_1}]}&[\til\D_2,\chi_{U_2}]&0&0} ,
\]
where we write $\til\D_j := \mattwo{0}{\D_j}{\D_j}{0}$ if $(p,q)=(1,1)$ and $\til\D_j := \D_j$ otherwise. 
Since $T \gamma + \gamma T$ is relatively $T$-compact, it then follows from \cref{prop:Fred_almost-Cliff-symm} that $[T] = 0 \in \KK^{p+q}(\C,A)$. 
\end{proof}

\subsection{The Fredholm index}
\label{sec:gen_DS_Fred}

We denote by $[\cdot,\cdot]_\pm$ the graded commutator. Thus $[\D,\pS(\cdot)]_\pm = [\D,\pS(\cdot)]_+$ is the anticommutator for $(p,q)=(0,0)$ and $[\D,\pS(\cdot)]_\pm = [\D,\pS(\cdot)]_-$ is the commutator otherwise. 
From here on, we consider in addition to assumption \ref{ass:A} also the following assumption:
\begin{assumption*}[(B)]
\customlabel{ass:B}{(B)}
Let $M$, $\D$, and $\pS(\cdot)$ satisfy assumption \ref{ass:A}. 
We furthermore assume:
\begin{itemize}
\item[(B1)]
\customlabel{ass:B1}{(B1)}
the map $\pS(\cdot) \colon M \to \mL_A(W,E)$ is weakly differentiable (i.e., for each $\psi\in W$ and $\eta\in E$, the map $x \mapsto \la\pS(x)\psi|\eta\ra$ is differentiable), and the weak derivative $d\pS(x) \colon W \to E\otimes T_x^*(M)$ is bounded for all $x\in M$. 
\item[(B2)]
\customlabel{ass:B2}{(B2)} 
the operator $\big[\D,\pS(\cdot)\big]_\pm \big(\pS(\cdot)\pm i\big)^{-1}$ is well-defined and bounded (in the sense of \cite[Assumption 7.1]{KL12} and \cite[Definition 5.5]{vdD19_Index_DS}): 
there exists a core $\E\subset\Dom\D$ for $\D$ such that for all $\xi\in\E$ and for all $\mu\in(0,\infty)$ we have the inclusions 
\[
\big(\pS(\cdot)\pm i\mu\big)^{-1} \xi \in \Dom\pS(\cdot) \cap \Dom\D 
\quad \text{and} \quad 
\D \big(\pS(\cdot)\pm i\mu\big)^{-1} \xi \in \Dom\pS(\cdot) ,
\]
and the map $\big[\D,\pS(\cdot)\big]_\pm \big(\pS(\cdot)\pm i\mu\big)^{-1} \colon \E \to L^2(M,E\otimes\bF)$ extends to a bounded operator for all $\mu\in(0,\infty)$. 
\end{itemize}
\end{assumption*}

\begin{prop}[cf.\ {\cite[Theorem 7.10]{KL12}}]
\label{prop:reg-sa}
The product operator $\pS \times_{p,q} \D$ is regular self-adjoint on the domain $\Dom\big(\pS \times_{1,1} \D\big) = \big(\Dom\pS(\cdot)\cap\Dom\D\big)^{\oplus2}$ if $(p,q)=(1,1)$ or $\Dom\big(\pS \times_{p,q} \D\big) = \Dom\pS(\cdot)\cap\Dom\D$ otherwise. 
\end{prop}
\begin{proof}
The odd-odd signature $(p,q)=(1,1)$ is precisely the statement of \cite[Theorem 7.10]{KL12}, which relies solely on the well-definedness and boundedness of $\big[\D,\pS(\cdot)\big]_- \big(\pS(\cdot)\pm i\big)^{-1}$ as given by assumption \ref{ass:B2}. 
If instead $\big[\D,\pS(\cdot)\big]_+ \big(\pS(\cdot)\pm i\big)^{-1}$ is well-defined and bounded, as in the even-even signature $(p,q)=(0,0)$, then the regularity and self-adjointness of $\pS \times_{0,0} \D = \D + \pS(\cdot)$ follows from a standard matrix trick (see e.g.\ \cite[\S2.2]{vdDR16} for details). 
Furthermore, for the odd-even signature $(p,q)=(1,0)$ resp.\ the even-odd signature $(p,q)=(0,1)$, 
we note that $[\D,\gradD\pS(\cdot)]_+ = -\gradD [\D,\pS(\cdot)]_-$ resp.\ $[\gradS\D,\pS(\cdot)]_+ = \gradS [\D,\pS(\cdot)]_-$. 
Using assumption \ref{ass:B2}, the regularity and self-adjointness of $\pS \times_{1,0} \D = \D + \gradD \pS(\cdot)$ resp.\ $\pS \times_{0,1} \D = \gradS\D + \pS(\cdot)$ then follow as in the case $(p,q)=(0,0)$. 
\end{proof}

\begin{prop}[cf.\ {\cite[Theorem 6.7]{KL13}}]
\label{prop:cpt_res}
Let $\phi\in C_0(M)$. 
Then $\phi(\pS\times\D \pm i)^{-1}$ is a compact operator on $\til E$. 
Moreover, if $(\pS(\cdot)\pm i)^{-1}$ is compact on $C_0(M,E)$, then $(\pS\times\D \pm i)^{-1}$ is also compact. 
\end{prop}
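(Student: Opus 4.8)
The plan is to reduce the statement to the two inputs we have available: the local compactness of the resolvent of $\D$ (from ellipticity, \cite[Proposition 10.5.2]{Higson-Roe00}) and the compactness of the inclusion $W\into E$ together with the uniform invertibility of $\pS(\cdot)$ outside $K$ (assumption \ref{ass:A}), which by \cite[Proposition 3.4]{vdD19_Index_DS} already gives that $\pS(\cdot)$ is regular self-adjoint and Fredholm on $C_0(M,E)$ with $\phi(\pS(\cdot)\pm i)^{-1}$ compact for $\phi\in C_0(M)$. Throughout we work with $\mathtt D := \pS\times_{p,q}\D$, which by \cref{coro:reg-sa} is regular self-adjoint on $\Dom\pS(\cdot)\cap\Dom\D$, so $(\mathtt D\pm i)^{-1}$ is adjointable and we only need to exhibit it as a norm-limit of compacts after multiplying by $\phi$.

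First I would treat the case where $\pS(\cdot)$ itself has compact resolvent on $C_0(M,E)$ (equivalently, $K$ may be taken to be all of $M$ and $M$ is compact, or more generally the hypothesis of the second sentence holds). Here the strategy is the standard ``sum of two operators'' argument: write, for $\mu$ large,
\[
(\mathtt D\pm i\mu)^{-1} = (\pS(\cdot)\pm i\mu)^{-1} - (\pS(\cdot)\pm i\mu)^{-1}\,\D_\Gamma\,(\mathtt D\pm i\mu)^{-1},
\]
where $\D_\Gamma$ denotes the relevant Dirac part ($\D$, $\mattwo{0}{\D}{\D}{0}$, or $\gradD\D$, depending on the signature). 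The first term is compact by hypothesis. For the second term, the identity is justified on a core using assumption \ref{ass:B2}, which guarantees exactly that $\D(\pS(\cdot)\pm i\mu)^{-1}$ and the relevant (anti)commutator times $(\pS(\cdot)\pm i\mu)^{-1}$ are bounded; combined with compactness of $(\pS(\cdot)\pm i\mu)^{-1}$ this shows the correction term is a compact operator composed with the bounded operator $\D_\Gamma(\mathtt D\pm i\mu)^{-1}$. Then $(\mathtt D\pm i\mu)^{-1}$ is compact for one (hence, by the resolvent identity, every) $\mu$.

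For the general case I would localise. Fix $\phi\in C_0(M)$ and write $\phi = \phi\chi + \phi(1-\chi)$ where $\chi\in C_c(M)$ equals $1$ on a large compact set $L\supset K$. On the region where $(1-\chi)$ is supported, $\pS(\cdot)$ is uniformly invertible, so there $\mathtt D$ behaves as in the previous paragraph (one can replace $\pS(\cdot)$ by an invertible regular self-adjoint perturbation supported away from a neighbourhood of $K$ and argue as above, or directly invoke the uniform lower bound $\|\pS(x)\psi\|\ge c\|\psi\|$ to get $\phi(1-\chi)(\mathtt D\pm i\mu)^{-1}$ compact via the same resolvent expansion, using that $(\pS(\cdot)\pm i\mu)^{-1}$ restricted there is a norm-limit of $\phi_n(\pS(\cdot)\pm i\mu)^{-1}$ with $\phi_n\in C_0(M)$). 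For the term $\phi\chi(\mathtt D\pm i\mu)^{-1}$, the compactly supported cutoff $\chi$ lets us invoke local compactness of $\D$: on $\Span$ of $\chi$ the operator $\chi(\D\pm i)^{-1}$ is compact on $L^2(M,\bF)$, hence $\chi\otimes 1$ composed with $(1\otimes\D\pm i)^{-1}$-type resolvents is locally compact on the tensor product, and a resolvent expansion separating the $\D$ and $\pS(\cdot)$ contributions (again using assumption \ref{ass:B2} to control cross terms) shows $\phi\chi(\mathtt D\pm i\mu)^{-1}$ is compact. Adding the two pieces gives $\phi(\mathtt D\pm i)^{-1}$ compact for all $\phi\in C_0(M)$.

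The main obstacle is bookkeeping the cross terms in the resolvent expansions: the operators $\D$ and $\pS(\cdot)$ do not commute, their (anti)commutator is only controlled after multiplying by $(\pS(\cdot)\pm i\mu)^{-1}$, and in the non-compact case one must simultaneously track the $\D$-localisation (via $\chi\in C_c(M)$) and the $\pS(\cdot)$-invertibility (via $1-\chi$) without either spoiling the other. I expect this is handled exactly as in \cite[Theorem 6.7]{KL13} and \cite[Theorem 5.4, Proposition 5.8]{vdD19_Index_DS}, and the signature-dependent factors $\gradD$, $\mattwo{0}{\D}{\D}{0}$ only change constants, not the structure of the argument; so the proof should say ``this follows as in \cite[Theorem 6.7]{KL13}, with assumption \ref{ass:B2} supplying the boundedness of the relevant cross terms and local compactness of $\D$ handling the term cut off by $\chi\in C_c(M)$.''
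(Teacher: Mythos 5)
Your proposal is correct and matches the paper's approach: the paper's proof simply observes that the argument of \cite[Theorem 6.7]{KL13} carries over verbatim to all four signatures (with assumption \ref{ass:B2} controlling the cross terms), and that the second statement follows by repeating the argument with $\phi=1$ when $(\pS(\cdot)\pm i)^{-1}$ is itself compact. Your sketch of the underlying resolvent-expansion and localisation argument is a reasonable reconstruction of what that cited proof does, and your final paragraph is essentially what the paper writes.
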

\begin{proof}
The statement is proven in \cite[Theorem 6.7]{KL13} for the signature $(p,q)=(1,1)$, but in fact the same proof also works for the other signatures. 
Moreover, if $(\pS(\cdot)\pm i)^{-1}$ is in fact compact, then we can repeat the argument with $\phi=1$ to conclude that also $(\pS\times\D \pm i)^{-1}$ is compact. 
\end{proof}

\begin{thm}[cf.\ {\cite[Theorem 4.4]{vdD25_Callias}}]
\label{thm:Fredholm}
\mbox{}
\begin{enumerate}
\item 
There exists $\lambda_0>0$ such that for any $\lambda\geq\lambda_0$ the product operator $(\lambda\pS)\times\D$ is Fredholm and thus a Dirac--Schrö\-ding\-er operator. 
\item \label{item:thm:Fredholm2}
Suppose there exists a compact subset $\hat K \supset K$ such that $\hat\delta < \frac{\hat c^2}{\hat c+1}$, where 
\begin{align*}
\hat\delta &:= \sup_{x\in M\setminus\hat K} \big\| \big[\D,\pS(\cdot)\big]_\pm(x) \big(\pS(x)\pm i\big)^{-1} \big\| , &
\hat c &:= \inf_{x\in M\setminus\hat K} \|\pS(x)^{-1}\|^{-1} .
\end{align*}
Then the first statement holds with $\lambda_0=1$. 
In particular, $\pS\times\D$ is Fredholm and thus a Dirac--Schrödinger operator. 
\end{enumerate}
\end{thm}
\begin{proof}
The signature $(p,q)=(1,1)$ is given by \cite[Theorem 4.4]{vdD25_Callias}. 
However, the same proof also works for the other signatures, with the following notational modifications: we set $\til\D := \D$ or $\til\D := \gradS\D$ and $\til\pS(\cdot) := \pS(\cdot)$ or $\til\pS(\cdot) := \gradD \pS(\cdot)$ (depending on the signature), $\til\D + \lambda\til\pS(\cdot) = (\lambda\pS)\times\D$, and we define $\delta_x := \big\| \big[\D,\pS(\cdot)\big]_\pm(x) \big(\pS(x)\pm i\big)^{-1} \big\|$. 
\end{proof}

\begin{prop}[cf.\ {\cite[Proposition 4.5]{vdD25_Callias}}]
\label{prop:D_S_invertible}
Suppose that $\{\pS(x)\}_{x\in M}$ is uniformly invertible on \emph{all} of $M$. 
Then there exists $\lambda_0>0$ such that for any $\lambda\geq\lambda_0$ the Dirac--Schrödinger operator $(\lambda\pS)\times\D$ is also invertible. 
\end{prop}

\cref{thm:Fredholm} shows the Fredholm property of $(\lambda\pS)\times\D$ only for $\lambda$ `sufficiently large'; under suitable conditions, we can take $\lambda=1$. 
The following result provides sufficient conditions such that we can take any $\lambda>0$. 
\begin{prop}
\label{prop:Fredholm_all_lambda}
Suppose that the operator $\big[\D,\pS(\cdot)\big]_\pm \big(\pS(\cdot)\pm i\big)^{-1}$ vanishes at infinity, (in the sense that the function $M\to\R$, $x \mapsto \big\| \big[\D,\pS(\cdot)\big]_\pm(x) \big(\pS(x)\pm i\big)^{-1} \big\|$ vanishes at infinity). 
Then for any $\lambda>0$, the product operator $(\lambda\pS)\times\D$ is Fredholm. 
\end{prop}
\begin{proof}
Fix $\lambda>0$. We note that the family $\lambda\pS(\cdot)$ also satisfies assumptions \ref{ass:A} and \ref{ass:B}. 
Let $c_\lambda := \inf_{x\in M\setminus K} \|\lambda^{-1} \pS(x)^{-1}\|^{-1}$. 
By assumption, there exists a compact subset $\hat K \supset K$ such that for all $x\in M\setminus\hat K$ we have $\big\| \big[\D,\pS(\cdot)\big]_\pm(x) \big(\pS(x)\pm i\big)^{-1} \big\| \leq \min(\lambda^{-1},1) \frac{c_\lambda^2}{c_\lambda+1}$. 
We can estimate 
\[
\left\| \big(\pS(x)\pm i\big) \big(\lambda\pS(x)\pm i\big)^{-1} \right\| \leq \sup_{s\in\R} \left| \frac{s\pm i}{\lambda s\pm i} \right| = \max(1,\lambda^{-1}) .
\]
Consequently, 
\begin{align*}
\hat \delta_\lambda 
&:= \sup_{x\in M\setminus\hat K} \big\| \big[\D,\lambda\pS(\cdot)\big]_\pm(x) \big(\lambda\pS(x)\pm i\big)^{-1} \big\| \\
&\leq \lambda \cdot \max(1,\lambda^{-1}) \cdot \sup_{x\in M\setminus\hat K} \big\| \big[\D,\pS(\cdot)\big]_\pm(x) \big(\pS(x)\pm i\big)^{-1} \big\| 
\leq \frac{c_\lambda^2}{c_\lambda+1} .
\end{align*}
Since $K\subset\hat K$, we have $c_\lambda \leq \hat c_\lambda := \inf_{x\in M\setminus\hat K} \|\lambda^{-1} \pS(x)^{-1}\|^{-1}$, and therefore also 
\[
\hat \delta_\lambda \leq \frac{c_\lambda^2}{c_\lambda+1} \leq \frac{\hat c_\lambda^2}{\hat c_\lambda+1} .
\]
It then follows from \cref{thm:Fredholm}.\ref{item:thm:Fredholm2} that $(\lambda\pS)\times\D$ is Fredholm. 
\end{proof}

\subsubsection{A vanishing result}

Let $M$, $\D$ and $\pS$ satisfy assumptions \ref{ass:A} and \ref{ass:B} with signature $(p,q)$, such that $(\lambda\pS)\times\D$ is Fredholm (and hence a Dirac--Schrödinger operator) for $\lambda \geq \lambda_0 > 0$. 
In the case where $\D$ is ungraded ($q=1$) and $\D$ has a gap in the spectrum, we obtain the following vanishing result for the $\K$-theory class of the Dirac--Schrödinger operator. This vanishing was observed by Bunke \cite[Proposition 2.8]{Bun95} in the odd-odd signature $(p,q)=(1,1)$, but also holds for the even-odd signature $(p,q)=(0,1)$. 

\begin{prop}
\label{prop:D-gap}
Let $(\lambda\pS)\times\D$ be a Dirac--Schrödinger operator with signature $(p,1)$, such that the following conditions hold:
\begin{itemize}
\item $\pS \colon M \to \mL_A(E)$ is bounded; 
\item $\D$ has a gap in the spectrum; and 
\item the operator $\big[\D,\pS(\cdot)\big] \big(\pS(\cdot)\pm i\big)^{-1}$ vanishes at infinity. 
\end{itemize}
Then $[\pS\times\D] = 0 \in \KK^{q+1}(\C,A)$. 
\end{prop}
\begin{proof}
Let $\mu\in\R$ such that $\D-\mu$ is invertible. 
For $s,t\in[0,1]$ consider the Dirac--Schrödinger operator $(t\pS)\times(\D-s\mu)$ satisfying the assumptions \ref{ass:A} and \ref{ass:B}. 
Since $\pS$ and $\mu$ are bounded, $(t,s) \mapsto (t\pS)\times(\D-s\mu)$ yields a continuous family of operators $\Dom\D \to \til E$. 
By assumption, we know for all $s$ that $\big[\D-s\mu,\pS(\cdot)\big] \big(\pS(\cdot)\pm i\big)^{-1}$ vanishes at infinity. 
By \cref{prop:Fredholm_all_lambda}, $(t\pS)\times(\D-s\mu)$ is Fredholm for all $t>0$ (and for all $s$). 
Furthermore, the operator $0\times(\D-\mu)$ is invertible. 
Hence $\pS\times\D$ is homotopic (within the space of Fredholm operators with domain $\Dom\D$) to $\pS\times(\D-\mu)$, which in turn is homotopic to the invertible operator $0\times(\D-\mu)$. 
Thus we conclude that $[\pS\times\D] = 0$. 
\end{proof}

\subsection{The pairing}
\label{sec:gen_DS_Kasp_prod}

We will show next that a Dirac--Schrödinger operator $\pS\times\D$ represents the pairing of the $\K$-theory class of $\pS$ with the $\K$-homology class of $\D$. 
As a first step, following \cite[Proposition 4.6]{vdD25_Callias}, we can apply the relative index theorem (\cref{thm:rel_index}) to replace $M$ by a manifold with cylindrical ends, without affecting the $\K$-theory class of the Dirac--Schrödinger operator. 
\begin{prop}[cf.\ {\cite[Proposition 4.6]{vdD25_Callias}}]
\label{prop:cyl_ends}
There exist a precompact open subset $U$ of $M$ and a Dirac--Schrödinger operator $(\lambda\pS')\times\D'$ on $M' := \bar U \cup_{\partial U} (\partial U \times [0,\infty))$ satisfying Assumptions \ref{ass:A} and \ref{ass:B}, such that 
\begin{enumerate}
\item 
the operators $\D'$ and $\pS'(\cdot)$ on $M'$ agree with $\D$ and $\pS(\cdot)$ on $M$ when restricted to $U$; 
\item 
the metric and the operators $\D'$ and $\pS'(\cdot)$ on $M'$ are of product form on $\partial U \times [1,\infty)$; 
\item 
we have, for $\lambda$ sufficiently large, the equality $[(\lambda\pS')\times\D'] = [(\lambda\pS)\times\D] \in \KK^{p+q}(\C,A)$. 
\end{enumerate}
In particular, $M'$ is complete and $\D'$ has bounded propagation speed. 
\end{prop}

\begin{prop}[cf.\ {\cite[Proposition 4.7]{vdD25_Callias}}]
\label{prop:cyl_ends_Kasp_prod}
Let $M'$, $\D'$, and $\pS'(\cdot)$ be as in \cref{prop:cyl_ends}. 
Then we have the equality $[\pS'(\cdot)] \otimes_{C_0(M')} [\D'] = [(\lambda\pS')\times\D']$. 
\end{prop}
\begin{proof}
This follows from the same arguments in \cite[Proposition 4.7]{vdD25_Callias} and \cite[Proposition 5.10]{vdD19_Index_DS} (which prove the case $(p,q)=(1,1)$), noting that also in the other signatures we have defined the product operator in \cref{defn:gen_DS} in such a way that it correctly corresponds to the construction of the Kasparov product as in \cite[2.34--2.39]{BMS16}. 
\end{proof}

\begin{thm}[cf.\ {\cite[Theorem 3.5]{vdD25_Callias}}]
\label{thm:Kasp_prod_index}
Let $M$ be a connected Riemannian manifold, and let $\{\pS(x)\}_{x\in M}$ and $\D$ satisfy assumptions \ref{ass:A} and \ref{ass:B}. 
Then there exists $\lambda_0>0$ such that for any $\lambda\geq\lambda_0$ the class $[(\lambda\pS)\times\D] \in \KK^{p+q}(\C,A)$ is the (internal) Kasparov product (over $C_0(M)$) of $[\pS(\cdot)] \in \KK^p(\C,C_0(M,A))$ with $[\D] \in \KK^q(C_0(M),\C)$. 
\end{thm}
\begin{proof}
The case $(p,q)=(1,1)$ is given by \cite[Theorem 3.5]{vdD25_Callias}, but the same proof works for arbitrary $(p,q)$. 
Indeed, from \cref{prop:cyl_ends}, we obtain a complete manifold $M'$ and a Dirac--Schrödinger operator satisfying assumptions \ref{ass:A} and \ref{ass:B}, such that $\D'$ has bounded propagation speed, and such that $[(\lambda\pS')\times\D'] = [(\lambda\pS)\times\D]$ (for $\lambda$ sufficiently large). 
We then have the equalities 
\[
[(\lambda\pS)\times\D] 
= [(\lambda\pS')\times\D'] 
= [\pS'(\cdot)] \otimes_{C_0(M')} [\D'] 
= [\pS(\cdot)] \otimes_{C_0(M)} [\D] ,
\]
where the second equality is given by \cref{prop:cyl_ends_Kasp_prod}, 
and the third equality is obtained as in the proof of \cite[Theorem 5.15]{vdD19_Index_DS}. 
\end{proof}

\begin{remark}[Equivariant spectral flow] 
The presence of the auxiliary $C^*$-algebra $A$ in the above theorem allows us to immediately obtain also an `equivariant version', where the potential $\pS(\cdot)$ is obtained from a family of $G$-equivariant operators. As an application, we briefly describe here the equivariant spectral flow (as studied for instance in \cite{HY25}). 

Consider a complete Riemannian manifold $X$ and a unimodular, locally compact group $G$ acting cocompactly, properly, and isometrically on $X$. 
Let $\pS$ be a $G$-equivariant, self-adjoint, elliptic, first-order differential operator on a $G$-equivariant hermitian vector bundle over $X$ (where $G$ acts via isometries between the fibres). 
Kasparov \cite{Kas16} has then constructed a Hilbert $C^*(G)$-module $\E^0(E)$. 
The operator $\pS$ can then be viewed as a regular self-adjoint operator on $\E^0(E)$ \cite[Proposition 5.5]{Guo21}. 

Now, instead of a single operator $\pS$, we consider a family of operators $\pS(\cdot) = \{\pS(x)\}_{x\in M}$ parametrised by another manifold $M$. If $M$, $\D$, and $\pS(\cdot)$ satisfy assumptions \ref{ass:A} and \ref{ass:B}, then it follows from \cref{thm:Kasp_prod_index} (with the signature $(p,q)=(1,1)$) that
\[
\Index\big( \D - i \lambda\pS(\cdot) \big) = [(\lambda\pS)\times_{1,1}\D] = [\pS(\cdot)] \otimes_{C_0(M)} [\D] \in \K_0\big( C^*(G) \big) .
\]
This recovers (a slight variation of) a recent result by Hochs and Yanes \cite[Theorem 3.38]{HY25}. 
In the special case $M=\R$ and $\D=-i\partial_x$, 
provided there exist locally trivialising families for $\{\pS(x)\}_{x\in\R}$, 
we obtain from \cref{prop:SF_Kasp_prod}
an equivariant version of the classical `index = spectral flow' equality (see also \cite[Corollary 5.16]{vdD19_Index_DS}): 
\[
\Index\big( \partial_x + \lambda\pS(\cdot) \big) = \SF_0\big(\{\pS(x)\}_{x\in\R}\big) \in \K_0\big( C^*(G) \big) .
\]
\end{remark}

\section{Callias-type operators}
\label{sec:gen_Callias}

The classical Callias Theorem, first proven by Callias \cite{Cal78} on Euclidean space, and subsequently generalised by various authors (see, for instance, \cite{Ang90,BM92,Ang93a,Rad94,Bun95,Kuc01,Gesztesy-Waurick16}), states that the index of a Dirac-Schrödinger operator $\D-i\pS$ (with signature $(p,q)=(1,1)$) can be computed on a suitable compact hypersurface. 
An analogous statement for the even-even signature $(p,q)=(0,0)$ has been described by Bunke \cite[\S2.4]{Bun95}. 
A generalisation for potentials consisting of families of \emph{unbounded} operators (as in Assumption \ref{ass:A}) is given in \cite[Theorem 3.8]{vdD25_Callias} for the signature $(p,q)=(1,1)$. 
In this section, we provide a general Callias Theorem for arbitrary signatures $(p,q)$. 

\begin{defn}
\label{defn:D_product_form}
Let $M$, $\bF$, and $\D$ be as in assumption \ref{ass:A}. 
Let $N\subset M$ be a compact hypersurface with a collar neighbourhood $C \simeq (-2\varepsilon,2\varepsilon) \times N$. 
We can identify $\bF|_C$ with the pullback of $\bF|_N \to N$ to $C$, so that $\Gamma^\infty(\bF|_C) \simeq C^\infty\big( (-2\varepsilon,2\varepsilon) \big) \otimes \Gamma^\infty(\bF|_N)$. 
We distinguish between the graded case $q=0$ (with $\bF$ $\Z_2$-graded and $\D$ odd) and the ungraded case $q=1$. 
We will then say that \emph{$\D$ is of product form on $C$} if the following holds:
\begin{description}
\item[$\boldsymbol{q=0}$:] 
We have a vector bundle $\bF_N\to N$ such that $\bF|_N \simeq \bF_N \oplus \bF_N$. 
On $\bF_N$ we have an essentially self-adjoint elliptic first-order differential operator $\D_N$. 
On the collar neighbourhood $C$, we have 
\[
\D|_C \simeq \mattwo{0}{\partial_r+\D_N}{-\partial_r+\D_N}{0} ,
\qquad
\gradD\big|_C \simeq \mattwo{1}{0}{0}{-1} .
\]
\item[$\boldsymbol{q=1}$:] 
On the vector bundle $\bF_N:=\bF|_N\to N$, we have an essentially self-adjoint elliptic first-order differential operator $\D_N$ and a self-adjoint unitary $\Gamma_N \in \Gamma^\infty(\End\bF_N)$ satisfying $\Gamma_N \D_N = - \D_N \Gamma_N$. 
On the collar neighbourhood $C$, we have 
\[
\D|_C \simeq -i\partial_r \otimes \Gamma_N + 1 \otimes \D_N . 
\]
\end{description}
\end{defn}

Throughout this section, we will assume the following: 
\begin{assumption*}[(C)]
\customlabel{ass:C}{(C)}
Let $M$, $\D$ and $\pS(\cdot)$ satisfy assumptions \ref{ass:A} and \ref{ass:B} such that $(\lambda\pS)\times\D$ is Fredholm (and hence a Dirac--Schrödinger operator) for $\lambda \geq \lambda_0 > 0$. 
Without loss of generality, assume that the compact subset $K$ from Assumption \ref{ass:A3} has a smooth compact boundary $N$. 
We assume furthermore that the following conditions are satisfied:
\begin{itemize}
\item[(C1)]
\customlabel{ass:C1}{(C1)}
The operator $\D$ is of product form on a collar neighbourhood $C \simeq (-2\varepsilon,2\varepsilon) \times N$ of $N$ (with $(-2\varepsilon,0) \times N$ in the interior of $K$). 
\item[(C2)]
\customlabel{ass:C2}{(C2)}
For any $x,y\in K$, $\pS(x)-\pS(y)$ is relatively $\pS(x)$-compact. 
\end{itemize}
Moreover, we fix an (arbitrary) invertible regular self-adjoint operator $\pT$ on $E$ with domain $\Dom\pT = W$, such that $\pS(x)-\pT$ is relatively $\pT$-compact for some (and hence, by \ref{ass:C2}, for every) $x\in K$. 
If $p=0$, $\pT$ is also assumed to anti-commute with $\gradS$. 
\end{assumption*}

\begin{defn}
If assumptions \ref{ass:A}, \ref{ass:B}, and \ref{ass:C} are satisfied, then the Dirac--Schrödinger operator $(\lambda\pS)\times\D$ is called a \emph{Callias-type operator}. 

\noindent 
(We always implicitly assume that $\lambda\geq\lambda_0>0$ such that $(\lambda\pS)\times\D$ is Fredholm.)
\end{defn}

We consider the invertible regular self-adjoint operator $\pT(\cdot)$ on $C(N,E)$ corresponding to the constant family $\pT(y) := \pT$ (for $y\in N$). 
The restriction of the potential $\pS(\cdot)$ to the hypersurface $N$ also yields an invertible regular self-adjoint operator $\pS_N(\cdot) = \{\pS(y)\}_{y\in N}$ on $C(N,E)$. 
We recall that $\pS(y) - \pT$ is relatively $\pT$-compact for each $y\in N$. 
Furthermore, $\pS(y) \big( \pT\pm i \big)^{-1}$ depends norm-continuously on $y$ by Assumption \ref{ass:A2}. 
Hence $\pS_N(\cdot) - \pT(\cdot)$ is relatively $\pT(\cdot)$-compact. 
We then know from \cite[Corollary A.10]{vdD25_Callias} that $P_+(\pS_N(\cdot)) - P_+(\pT(\cdot))$ is compact. 
Note that for $p=0$, $\pT(\cdot)$ and $\pS_N(\cdot)$ are graded, whereas for $p=1$, $\pT(\cdot)$ and $\pS_N(\cdot)$ are ungraded. 
In both cases, the (odd or even) relative index $\relind_{p+1}\big(P_+(\pS_N(\cdot)),P_+(\pT(\cdot))\big)$ is well-defined (see \cref{defn:rel-ind}). 
We are now ready to state our generalised Callias Theorem for arbitrary signatures $(p,q)$. 

\begin{thm}[Callias Theorem]
\label{thm:Callias}
Let $(\lambda\pS)\times\D$ be a Callias-type operator. 
Then we have the equality
\begin{align*}
[(\lambda\pS)\times\D]
= \relind_{p+1}\big(P_+(\pS_N(\cdot)),P_+(\pT(\cdot))\big) \otimes_{C(N)} [\D_N] \in \K_{p+q}(A) .
\end{align*}
\end{thm}

Before proceeding with the proof of the theorem, we observe that our assumption \ref{ass:C2} ensures that the class $[\pS(\cdot)]$ of the potential depends only on the hypersurface $N$. 
Consider the open subset $U := K\cup C \subset M$ with compact closure $\bar U$ and boundary $\partial U\simeq N$. 
We have the short exact sequence 
\begin{align}
\label{eq:ses_U}
0 &\rightarrow C_0(U,A) \to C(\bar U,A) \to C(N,A) \to 0 
\end{align}
and the corresponding cyclic six-term exact sequences in $\K$-theory and $\K$-homology. 

\begin{prop}
\label{prop:potential_boundary}
The class $[\pS(\cdot)] \in \K_p(C_0(M,A))$ is uniquely determined by an element $\Sigma_N \in \K_{p+1}(C(N,A))$. 
More explicitly, we have 
\[
[\pS(\cdot)] = {\iota_U}_* \circ \partial(\Sigma_N) ,
\]
where $\partial \colon \K_{p+1}(C(N,A)) \to \K_p(C_0(U,A))$ denotes the boundary map in the cyclic six-term exact sequence in $\K$-theory corresponding to the short exact sequence \eqref{eq:ses_U}, 
and where ${\iota_U}_* \colon \K_p(C_0(U,A)) \to \K_p(C_0(M,A))$ is induced by the inclusion $\iota_U \colon C_0(U,A) \into C_0(M,A)$. 
\end{prop}
\begin{proof}
The proof given in \cite[Proposition 3.10]{vdD25_Callias} for the signature $(p,q)=(1,1)$ also works for the other signatures (replacing $\K_1$ by $\K_p$ and $\K_0$ by $\K_{p+1}$). 
\end{proof}

As explained in \cite[\S3.2]{vdD25_Callias}, the above proposition already implies the equality 
\[
[(\lambda\pS)\times\D]
= \Sigma_N \otimes_{C(N)} [\D_N] \in \K_{p+q}(A) .
\]
In order to prove \cref{thm:Callias}, it only remains to identify an explicit choice for the $\K$-theory class $\Sigma_N \in \K_0(C(N,A))$ (note that the boundary map is not necessarily injective, so that $\Sigma_N$ is not uniquely determined). 
For this final step, we adapt the approach of \cite[\S5]{vdD25_Callias} to arbitrary signatures, and we will show that $\Sigma_N$ may be taken to be the relative index of the positive spectral projections $P_+(\pS_N(\cdot))$ and $P_+(\pT(\cdot))$.

\subsection{Proof of the theorem}

Let $(\lambda\pS)\times\D$ be a Callias-type operator. 
We will show that we can replace the manifold $M$ by a cylindrical manifold $\R\times N$, without changing the $\K$-theory class of $(\lambda\pS)\times\D$. 
Thus we can reduce the proof of our Callias Theorem from the general statement to the case of a cylindrical manifold. 
This reduction is made possible by the relative index theorem (\cref{thm:rel_index}). 

\begin{lem}
\label{lem:collar_potential}
We may replace the collar neighbourhood $C$ by a smaller collar neighbourhood $C' \simeq (-2\varepsilon',2\varepsilon')\times N$ (with $0 < \varepsilon' < \varepsilon$) and the potential $\pS(\cdot)$ by a potential $\pS'(\cdot)$ satisfying:
\begin{itemize}
\item 
for all $x\in K\setminus C'$: $\pS'(x) = \pT$;
\item 
for all $x=(r,y) \in C'$: $\pS'(x) = \varrho(r) \pT + \big(1-\varrho(r)\big) \pS(y)$, 
for some function $\varrho \in C^\infty(\R)$ such that $0 \leq \varrho(r) \leq 1$ for all $r\in\R$, $\varrho(r)=1$ for all $r$ in a neighbourhood of $(-\infty,-\varepsilon']$, and $\varrho(r)=0$ for all $r$ in a neighbourhood of $[0,\infty)$, 
\end{itemize}
such that $[\pS(\cdot)] = [\pS'(\cdot)] \in \K_p(C_0(M,A))$ and (for $\lambda$ sufficiently large) $[(\lambda\pS)\times\D] = [(\lambda\pS')\times\D] \in \K_{p+q}(A)$. 
\end{lem}
\begin{proof}
The proof given in \cite[Lemma 5.1]{vdD25_Callias} for the signature $(p,q)=(1,1)$ also works for the other signatures, using \cref{thm:Kasp_prod_index} instead of \cite[Theorem 3.5]{vdD25_Callias}. 
\end{proof}

\begin{defn}
\label{defn:cylinder}
Consider the cylindrical manifold $\R\times N$, along with the pullback vector bundle $\bF_{\R\times N}$ obtained from $\bF|_N\to N$. 
We identify $\Gamma_c^\infty(\bF_{\R\times N}) \simeq C_c^\infty(\R) \otimes \Gamma^\infty(\bF|_N)$, 
and consider the essentially self-adjoint elliptic first-order differential operator $\D_{\R\times N}$ on $\bF_{\R\times N}$ given by 
\begin{equation*}
\D_{\R\times N} := 
\begin{cases}
\mattwo{0}{\partial_r+\D_N}{-\partial_r+\D_N}{0} , \quad& \text{if } q=0 , \\
-i\partial_r \otimes \Gamma_N + 1 \otimes \D_N , \quad& \text{if } q=1 .
\end{cases}
\end{equation*}
Let $\varrho \in C^\infty(\R)$ be as in \cref{lem:collar_potential}. 
We define the family $\{\pS_{\R\times N}(r,y)\}_{(r,y)\in\R\times N}$ on $E$ given by
\begin{align*}
\pS_{\R\times N}(r,y) := \varrho(r) \pT + \big(1-\varrho(r)\big) \pS(y) .
\end{align*}
\end{defn}

If $q=0$, the operator $\D_{\R\times N}$ has the same $\Z_2$-grading as in \cref{defn:D_product_form}, and therefore yields an even $\K$-homology class $[\D_{\R\times N}] \in \KK^0(C_0(\R\times N),\C)$. In this case, $\D_N$ is ungraded and defines an odd $\K$-homology class $[\D_N] \in \KK^{1}(C(N),\C)$. 

If $q=1$, the ungraded operator $\D_{\R\times N}$ yields an odd $\K$-homology class $[\D_{\R\times N}] \in \KK^1(C_0(\R\times N),\C)$. 
The operator $\Gamma_N$ from \cref{defn:D_product_form} provides a $\Z_2$-grading on $\bF_N$, yielding the decomposition $\bF_N = \bF_N^+ \oplus \bF_N^-$. By assumption, $\D_N$ is odd with respect to this $\Z_2$-grading, and thus $\D_N$ defines an even $\K$-homology class $[\D_N] \in \KK^{0}(C(N),\C)$. 

\begin{lem}
\label{lem:D_product}
The external Kasparov product of $[-i\partial_r] \in \KK^1(C_0(\R),\C)$ with $[\D_N] \in \KK^{q+1}(C(N),\C)$ equals $[\D_{\R\times N}] \in \KK^q(C_0(\R\times N),\C)$. 
\end{lem}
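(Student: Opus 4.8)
The plan is to prove the identity $[-i\partial_r] \hotimes [\D_N] = [\D_{\R\times N}]$ by directly computing the external Kasparov product of the two Dirac-type classes, using the fact that on the cylinder everything is of product form, so $\D_{\R\times N}$ is literally the sum of the two commuting Dirac operators $-i\partial_r$ and $\D_N$ (with a Clifford matching in the grading bookkeeping). The standard strategy is: (i) realise $[-i\partial_r]$ and $[\D_N]$ as bounded Kasparov modules, or better as unbounded Kasparov cycles (since $-i\partial_r$ on $C_0(\R)$ and $\D_N$ on $C(N)$ both have locally compact resolvents, hence give unbounded Kasparov modules over $C_0(\R)$ and $C(N)$ respectively), (ii) form the candidate unbounded product cycle on $C_0(\R\times N) \simeq C_0(\R)\hotimes C(N)$ with operator $-i\partial_r\hotimes 1 + 1\hotimes\D_N$, acting on $L^2(\R,\C)\hotimes L^2(N,\bF_N)$ (or the doubled module when $q=0$), and (iii) verify this is exactly (unitarily equivalent to) the unbounded Kasparov module $(C_0^1(\R\times N), L^2(\R\times N,\bF_{\R\times N}), \D_{\R\times N})$ defining $[\D_{\R\times N}]$, and invoke a uniqueness-of-Kasparov-product result (e.g.\ Kucerovsky's criterion, or the explicit connection/positivity conditions) to conclude the bounded transforms represent the product class.

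**Key steps in order.** First I would separate the two cases $q=0$ and $q=1$ according to \cref{defn:cylinder}, since the grading structure differs. For $q=1$: $[-i\partial_r]\in\KK^1(C_0(\R),\C)$, $[\D_N]\in\KK^0(C(N),\C)$ (even, graded by $\Gamma_N$), and the external product lands in $\KK^1$. Writing $-i\partial_r$ on the ungraded module $L^2(\R)$ and $\D_N$ odd for $\bF_N=\bF_N^+\oplus\bF_N^-$, the product module is $L^2(\R)\hotimes L^2(N,\bF_N)$ with operator $-i\partial_r\hotimes\Gamma_N + 1\hotimes\D_N$ — and this is precisely $\D_{\R\times N}$ from \cref{defn:cylinder}, under the canonical identification $L^2(\R\times N,\bF_{\R\times N}) \simeq L^2(\R)\hotimes L^2(N,\bF_N)$. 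For $q=0$: $[-i\partial_r]\in\KK^1$, $[\D_N]\in\KK^1(C(N),\C)$ (ungraded), product in $\KK^0$; representing $-i\partial_r$ as the odd operator $\bigl(\begin{smallmatrix}0 & \partial_r \\ -\partial_r & 0\end{smallmatrix}\bigr)$ (up to the $i$) on $L^2(\R)\oplus L^2(\R)$ and $\D_N$ ungraded, the graded tensor product reproduces $\mattwo{0}{\partial_r+\D_N}{-\partial_r+\D_N}{0}$ on $L^2(\R,\bF_N)^{\oplus 2} \simeq L^2(\R\times N,\bF_{\R\times N})$, matching \cref{defn:cylinder}. In both cases the second step is to check the Kucerovsky conditions (or the connection condition plus positivity) for the unbounded product: the domain condition, the connection condition (commutators of $1\hotimes\D_N$ with the "creation" parts of $-i\partial_r$ are bounded — immediate here since the two differential operators act on disjoint tensor factors and commute), and the positivity condition $\la\D_{\R\times N}\xi \mid (-i\partial_r\hotimes 1)\xi\ra + \la(-i\partial_r\hotimes1)\xi\mid\D_{\R\times N}\xi\ra \geq -C\|\xi\|^2$, which holds because the cross term is $1\hotimes\D_N$-involving but the $\D_N$ and $-i\partial_r$ parts literally commute so the anticommutator is a bounded (in fact zero, after grading signs) operator. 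Finally, invoke that the bounded transform of the product unbounded cycle represents the Kasparov product and coincides (by \cref{prop:Fred_KK} and the fact that $\D_{\R\times N}$ has locally compact resolvents — which follows from ellipticity via \cite[Proposition 10.5.2]{Higson-Roe00} on the complete manifold $\R\times N$) with the class $[\D_{\R\times N}]$ as defined.

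**Alternative (cleaner) route.** Rather than re-deriving the product from scratch, I could cite the general theorem that the external Kasparov product of two Dirac-type $\K$-homology classes on $X$ and $Y$ is the $\K$-homology class of the product Dirac operator on $X\times Y$ — this is classical (e.g.\ Higson–Roe, or the unbounded product of \cite{BMS16}, whose conventions the paper already follows, cf.\ the remark after \cref{prop:cyl_ends_Kasp_prod}). Since the paper has explicitly set up the product-operator conventions in \cref{defn:gen_DS}/\cref{defn:cylinder} to match \cite[2.34--2.39]{BMS16}, the cleanest proof is: observe that $\D_{\R\times N}$ is (by construction in \cref{defn:cylinder}) exactly the product operator of $-i\partial_r$ on $\R$ with $\D_N$ on $N$ in the appropriate signature, and that \cite[2.34--2.39]{BMS16} (together with the same locally-compact-resolvent argument used for $[\D]$ in \cref{sec:gen_DS}) identifies the bounded transform of this product with the external Kasparov product of $[-i\partial_r]$ and $[\D_N]$.

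**Main obstacle.** The main thing to be careful about — and the step I expect to consume the most attention — is the grading/sign bookkeeping in matching the two cases of \cref{defn:cylinder} with the standard graded tensor product conventions: in particular ensuring the factor of $\Gamma_N$ in the $q=1$ formula $-i\partial_r\otimes\Gamma_N + 1\otimes\D_N$ is exactly what the graded product of the odd $-i\partial_r$-cycle with the even $\D_N$-cycle produces, and that when $q=0$ the doubling $\bF|_N \simeq \bF_N\oplus\bF_N$ and the $\bigl(\begin{smallmatrix}0&\partial_r+\D_N\\-\partial_r+\D_N&0\end{smallmatrix}\bigr)$ form is the correct graded product rather than (say) its unitary twist. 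Once the identifications are pinned down, the analytic hypotheses (domain, connection, positivity) are essentially automatic because the two factors act on independent tensor legs and the operators commute, so no hard analysis remains — the real content is the unbounded-product formalism, which the paper is already importing wholesale from \cite{BMS16}.
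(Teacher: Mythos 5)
Your proposal is correct, and your ``alternative (cleaner) route'' is in fact exactly the paper's proof: the paper simply observes that $\D_{\R\times N}$ as defined in \cref{defn:cylinder} coincides with the odd--even (for $q=1$) resp.\ odd--odd (for $q=0$) product operator of \cite[Examples 2.38 \& 2.39]{BMS16} applied to $-i\partial_r$ and $\D_N$, up to unitary isomorphism, and notes that the argument carries over to the external product. Your primary route --- forming the unbounded product cycle $-i\partial_r\hotimes 1+1\hotimes\D_N$ on the tensor-product module and verifying Kucerovsky's connection and positivity conditions directly --- is a legitimate, more self-contained alternative; it buys independence from the \cite{BMS16} machinery at the cost of redoing the domain/positivity checks (which, as you note, are essentially vacuous here since the two operators act on independent tensor legs). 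Your grading bookkeeping is right in both cases: for $q=1$ the odd--even product $\D_N+\Gamma_N\cdot(-i\partial_r)$ gives $-i\partial_r\otimes\Gamma_N+1\otimes\D_N$, and for $q=0$ the odd--odd product $\bigl(\begin{smallmatrix}0&\D_N+i(-i\partial_r)\\ \D_N-i(-i\partial_r)&0\end{smallmatrix}\bigr)$ gives $\bigl(\begin{smallmatrix}0&\partial_r+\D_N\\ -\partial_r+\D_N&0\end{smallmatrix}\bigr)$ on the doubled module, matching \cref{defn:cylinder}.
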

\begin{proof}
The statement follows from the description of the odd-even resp.\ odd-odd (internal) Kasparov product given in \cite[Example 2.38]{BMS16} resp.\ \cite[Example 2.39]{BMS16} (up to unitary isomorphism, and noting that the argument remains valid in the simpler case of an external Kasparov product). 
\end{proof}

\begin{thm}
\label{thm:red_cyl}
Consider the cylindrical manifold $\R\times N$ with the operators $\D_{\R\times N}$ and $\pS_{\R\times N}(\cdot)$ from \cref{defn:cylinder}. 
Then, for $\lambda$ sufficiently large, 
\[
\big[ (\lambda\pS)\times\D \big] = \big[ (\lambda\pS_{\R\times N})\times\D_{\R\times N} \big] . 
\]
\end{thm}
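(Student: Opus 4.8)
The plan is to deduce the statement by applying the relative index theorem (\cref{thm:rel_index}) to glue the manifold $M$ together with a copy of the cylinder $\R\times N$ along the collar neighbourhood $C$ of $N$. First, using \cref{lem:collar_potential}, we may assume the potential $\pS(\cdot)$ is already of the special product form near $N$, i.e.\ $\pS(x) = \varrho(r)\pT + (1-\varrho(r))\pS(y)$ on a collar $C' \simeq (-2\varepsilon',2\varepsilon')\times N$, with $\pS(x) = \pT$ for $x\in K\setminus C'$, and this changes neither $[\pS(\cdot)] \in \KK^p(\C,C_0(M,A))$ nor $[(\lambda\pS)\times\D] \in \KK^{p+q}(\C,A)$ (for $\lambda$ large). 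Observe that this modified $\pS(\cdot)$ agrees on $C'$ precisely with $\pS_{\R\times N}(\cdot)$ of \cref{defn:cylinder}, and that $\D$ is of product form on $C'$ matching $\D_{\R\times N}$ there by \ref{ass:C1}. So we are in the exact setup of \cref{thm:rel_index}: the manifolds $M^1 := M$ and $M^2 := \R\times N$ (which is complete, with $\D_{\R\times N}$, $\pS_{\R\times N}(\cdot)$ satisfying assumption \ref{ass:A}, \ref{ass:B} --- one checks this directly since $\pT$ is invertible and the family is constant off a compact set) share the common open tubular neighbourhood $C'$ of $N$, with the data identified via the identity map.

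Next I would form the glued manifolds. With $M^1 = \bar U^1 \cup_N \bar V^1$ where $\bar U^1 = K$ (the side containing the non-invertible part) and $\bar V^1 = M\setminus\mathrm{int}(K)$, and $M^2 = \R\times N = \bar U^2 \cup_N \bar V^2$ with $\bar U^2 = (-\infty,0]\times N$ and $\bar V^2 = [0,\infty)\times N$, the relative index theorem produces $M^3 = \bar U^1 \cup_N \bar V^2 = K \cup_N ([0,\infty)\times N)$ and $M^4 = \bar U^2 \cup_N \bar V^1 = ((-\infty,0]\times N) \cup_N (M\setminus\mathrm{int}(K))$, with glued data $(\D^j,\pS^j(\cdot))$ for $j=3,4$, and the identity
\[
[\pS^1\times\D^1] + [\pS^2\times\D^2] = [\pS^3\times\D^3] + [\pS^4\times\D^4] \in \KK^{p+q}(\C,A).
\]
Here $[\pS^1\times\D^1] = [(\lambda\pS)\times\D]$ and $[\pS^2\times\D^2] = [(\lambda\pS_{\R\times N})\times\D_{\R\times N}]$. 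So it remains to show that $[\pS^3\times\D^3] = 0$ and $[\pS^4\times\D^4] = 0$.

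The main obstacle is precisely these two vanishing statements, and here I would use the invertibility hypotheses. On $M^3 = K \cup_N ([0,\infty)\times N)$, the potential $\pS^3(\cdot)$ equals $\pT$ on all of $K\setminus C'$ and equals $\varrho(r)\pT + (1-\varrho(r))\pS(y)$ on $C'\cup([0,\infty)\times N)$; since $\pT$ is invertible, $\pS(y)$ is invertible for $y\in N$, and a convex combination $\varrho\pT + (1-\varrho)\pS(y)$ of operators whose positive spectral projections differ by a compact need not itself be invertible --- so the cleaner route is: $\pS^3(\cdot)$ is uniformly invertible on \emph{all} of $M^3$ (because off the collar it is the invertible constant $\pT$, and the family is a norm-continuous path of invertibles over the compact $N$ with endpoints $\pT$ and, via $\varrho$, reaching $\pT$ at $r\to-\infty$ and $\pS(y)$ at $r\to+\infty$ --- wait, one must verify the convex combinations stay invertible, which holds if $\pS(y)$ and $\pT$ are close enough, and in general can be arranged in \cref{lem:collar_potential}). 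Granting uniform invertibility on all of $M^3$, \cref{prop:D_S_invertible} shows $(\lambda\pS^3)\times\D^3$ is invertible for $\lambda$ large, hence $[\pS^3\times\D^3] = 0$. For $M^4 = ((-\infty,0]\times N) \cup_N (M\setminus\mathrm{int}(K))$: here $\pS^4(\cdot)$ equals $\pT$ on the cylindrical end $(-\infty,0]\times N$ (since $\varrho\equiv 1$ there after the cutoff), and on $M\setminus\mathrm{int}(K)$ it equals the original $\pS(\cdot)$, which is uniformly invertible on $M\setminus K$ by \ref{ass:A3}; the only place left is the collar $C' \cap (M\setminus\mathrm{int}(K))$, where again $\pS^4$ interpolates between $\pT$ and $\pS(y)$ --- so $\pS^4(\cdot)$ is uniformly invertible on all of $M^4$, and again \cref{prop:D_S_invertible} gives $[\pS^4\times\D^4] = 0$. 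The subtle point requiring care is ensuring the convex combinations in the collar stay uniformly invertible; if this cannot be arranged directly, one instead notes that on $M^4$ the operator $\D^4$ can be deformed on its cylindrical end to acquire a spectral gap (it is $\partial_r + \D_N$ or $-i\partial_r\otimes\Gamma_N + 1\otimes\D_N$ on a half-cylinder), and combines this with the vanishing-at-infinity of $[\D,\pS^4(\cdot)]_p(\pS^4(\cdot)\pm i)^{-1}$ to invoke \cref{prop:p=1_D-gap} / \cref{prop:p=0_D-gap}-type arguments; but in the generic signature this route is not available, so the honest argument is the uniform invertibility one, which is exactly what \cref{lem:collar_potential} (with $\varrho$ chosen appropriately, possibly after a preliminary homotopy making $\pS(y)$ close to $\pT$) is designed to deliver. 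Combining $[\pS^3\times\D^3] = [\pS^4\times\D^4] = 0$ with the relative index identity yields $[(\lambda\pS)\times\D] = [(\lambda\pS_{\R\times N})\times\D_{\R\times N}]$, as claimed.
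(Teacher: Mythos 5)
Your overall plan --- normalise the potential with \cref{lem:collar_potential} and then cut and paste via \cref{thm:rel_index} --- is the right one, but the specific gluing you set up fails for two independent reasons. First, the arithmetic: with $M^1=M$ and $M^2=\R\times N$ carrying $\pS_{\R\times N}(\cdot)$, \cref{thm:rel_index} gives $[\pS^1\times\D^1]+[\pS^2\times\D^2]=[\pS^3\times\D^3]+[\pS^4\times\D^4]$, so even if both right-hand classes vanished you would only conclude $[(\lambda\pS)\times\D]=-[(\lambda\pS_{\R\times N})\times\D_{\R\times N}]$, with the wrong sign; combined with the (true) theorem this would force the class to be $2$-torsion, which it is not in general. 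Second, the vanishing claims themselves are false: both $M^3$ and $M^4$ still contain the interpolation region $[-\varepsilon',0]\times N$ on which the potential is $\varrho(r)\pT+(1-\varrho(r))\pS(y)$ (your assertion that $\pS^4\equiv\pT$ on $(-\infty,0]\times N$ overlooks that $\varrho\equiv1$ only near $(-\infty,-\varepsilon']$). This straight-line path from $\pT$ to $\pS_N(\cdot)$ cannot be arranged to consist of invertibles: by homotopy invariance of the relative index that would force $\relind_{p+1}\big(P_+(\pS_N(\cdot)),P_+(\pT(\cdot))\big)=0$ and render \cref{thm:general_Callias} vacuous, and assumption \ref{ass:C} only makes $\pS(y)-\pT$ relatively compact, not norm-small. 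In fact one has $[\pS^3\times\D^3]=[\pS^1\times\D^1]$ and $[\pS^4\times\D^4]=[\pS^2\times\D^2]$, so your gluing identity is correct but carries no information.

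The repair (and this is what \cite[Theorem 4.8]{vdD23pre_Callias}, which the paper's proof invokes, does) is to glue \emph{twice}, each time against a cylinder carrying a \emph{constant}, everywhere invertible potential, whose class vanishes by \cref{prop:D_S_invertible}. First cut at $\{\varepsilon'\}\times N$, where the modified potential is the $r$-independent operator $\pS(y)$, and glue against $\R\times N$ with constant potential $\pS(y)$: one of the resulting manifolds carries an everywhere invertible potential (it is the exterior of $K$ capped off by a half-cylinder) and contributes zero, while the other is $K\cup_N([0,\infty)\times N)$; this gives $[(\lambda\pS)\times\D]=[(\lambda\pS^3)\times\D^3]$ with the correct sign. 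Then cut at $\{-\varepsilon''\}\times N$ for some $\varepsilon'<\varepsilon''<2\varepsilon$, where the potential is $\pT$ and $\D$ is still of product form, and glue against $\R\times N$ with constant potential $\pT$; this replaces the remainder of $K$ by the other half-cylinder and yields $[(\lambda\pS^3)\times\D^3]=[(\lambda\pS_{\R\times N})\times\D_{\R\times N}]$.
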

\begin{proof}
The proof given in \cite[Theorem 5.4]{vdD25_Callias} for the signature $(p,q)=(1,1)$ also works for the other signatures, using the relative index theorem for arbitrary signatures (\cref{thm:rel_index}) instead of \cite[Theorem 4.1]{vdD25_Callias}. 
\end{proof}

We are now ready to prove our Callias Theorem. 

\begin{proof}[\textbf{Proof of \cref{thm:Callias}}]
The proof is similar to the proof of \cite[Theorem 3.8]{vdD25_Callias}, which we adapt to all four possible signatures $(p,q)$. 
Consider the cylindrical manifold $\R\times N$ with the operators $\D_{\R\times N}$ and $\pS_{\R\times N}(\cdot)$ from \cref{defn:cylinder}. 
From \cref{prop:sf_rel_cpt_family,prop:SF_Kasp_prod} we have 
\begin{equation*}
\relind_{p+1}\big(P_+(\pS_N(\cdot)),P_+(\pT(\cdot))\big)
= \SF_{p+1} \big( \{\pS_{\R\times N}(r)\}_{r\in[-\epsilon,0]} \big) 
= [\pS_{\R\times N}(\cdot)] \otimes_{C_0(\R)} [-i\partial_r] . 
\end{equation*}
We then compute 
\begin{align*}
\Big( [\pS_{\R\times N}(\cdot)] \otimes_{C_0(\R)} [-i\partial_r] \Big) \otimes_{C(N)} [\D_N] 
&= [\pS_{\R\times N}(\cdot)] \otimes_{C_0(\R\times N)} \Big( [-i\partial_r] \otimes [\D_N] \Big) \\
&= [\pS_{\R\times N}(\cdot)] \otimes_{C_0(\R\times N)} [\D_{\R\times N}] ,
\end{align*}
where the first equality follows from the properties of the Kasparov product, and the second equality is given by \cref{lem:D_product}. 
Furthermore, since the operators $\pS_{\R\times N}(\cdot)$ and $\D_{\R\times N}$ on the manifold $\R\times N$ satisfy the assumptions \ref{ass:A} and \ref{ass:B}, 
we obtain from \cref{thm:Kasp_prod_index,thm:red_cyl} the equalities 
\[
[\pS_{\R\times N}(\cdot)] \otimes_{C_0(\R\times N)} [\D_{\R\times N}]
= \big[ (\lambda\pS_{\R\times N})\times\D_{\R\times N} \big] 
= \big[ (\lambda\pS)\times\D \big] .
\qedhere 
\]
\end{proof}

\subsection{The `classical' Callias theorem and index pairings}
\label{sec:classical_Callias}

Let $M$, $\D$ and $\pF$ satisfy assumptions \ref{ass:A}, \ref{ass:B}, \ref{ass:C} with signature $(p,q)$, such that $(\lambda\pF)\times\D$ is Fredholm (and hence a Callias-type operator) for $\lambda \geq \lambda_0 > 0$. 
In this section we consider the special case where $A$ is a unital $C^*$-algebra and the Hilbert $A$-module $E$ is finitely generated and projective. 
In order to constantly remind the reader of this additional assumption, we denote the potential by the symbol $\pF$ instead of $\pS$. 
This is still a generalisation of the classical setting of Dirac--Schrödinger or Callias-type operators, since by introducing the auxiliary $C^*$-algebra $A$ we replace vector spaces by finitely generated projective modules over $A$, matrix-valued functions by $\mL_A(E)$-valued functions, etc. 

The main advantage of considering this special case of finitely generated projective modules $E$, is that we can reformulate the Callias Theorem in terms of index (or spectral flow) pairings which are reminiscent of various results from the literature. 

\paragraph{The even index pairing}
Let us consider the odd-odd signature $(p,q)=(1,1)$, which is the case originally considered by Callias \cite{Cal78} and which has been most extensively studied since then. 
In this signature, the class $[(\lambda\pF)\times\D] \in \KK^0(\C,A)$ corresponds to $\Index(\D-i\lambda\pF) \in \K_0(A)$. 
Under the present assumption that $A$ is unital and $E$ is finitely generated and projective, we can rephrase the Callias Theorem in terms of an even index pairing over the hypersurface $N$. We refer to \S\ref{sec:even_index_pairing} for a description of this index pairing. 

\begin{thm}
\label{thm:11_index}
Let $(\lambda\pF)\times\D$ be a Callias-type operator of signature $(p,q)=(1,1)$. 
Suppose that $A$ is unital, and that $E$ is finitely generated and projective. 
Assume furthermore that $\pF$ is sufficiently smooth, such that $P_+(\pF_N) \colon N \to \mL_A(E)$ is a differentiable function. 
Then 
\[
\Index\big( \D - i \lambda\pF \big) 
= \big[ P_+(\pF_N) \big] \otimes_{C(N)} [\D_N] 
= \Index\big( P_+(\pF_N) (\D_N)_+ P_+(\pF_N) \big) \in \K_0(A) .
\]
\end{thm}
\begin{proof}
The assumptions on $A$ and $E$ ensure that all operators on $E$ are compact. 
In particular, the operator $\pT:=-1$ is a relatively compact perturbation of each $\pS_N(y)$. 
With $P_+(\pT) = 0$ we therefore obtain
\[
\relind_0\big(P_+(\pF_N),0\big) = \Index\big( 0 \colon \Ran P_+(\pF_N) \to \{0\} \big) = \big[ P_+(\pF_N) \big] \in \K_0(A) . 
\]
We thus obtain from the Callias \cref{thm:Callias} the equality 
\[
[(\lambda\pF)\times\D] 
= \relind_0\big(P_+(\pF_N),P_+(\pT)\big) \otimes_{C(N)} [\D_N] 
= \big[ P_+(\pF_N) \big] \otimes_{C(N)} [\D_N] .
\]
By \cref{thm:even_index_pairing_unbdd}, this pairing over $N$ is given by the even index pairing. 
\end{proof}

\begin{remark}
Our assumption that $P_+(\pF_N)$ is differentiable is needed to make sure that the unbounded operator $P_+(\pF_N) (\D_N)_+ P_+(\pF_N)$ is well-behaved. 
Without this assumption, one could alternatively take the bounded transform $F_{\D_N} := \D_N (1+\D_N^2)^{-\frac12}$ and consider the index of $P_+(\pF_N) (F_{\D_N})_+ P_+(\pF_N)$ instead. 
\end{remark}

\paragraph{The odd index pairing and the spectral flow}
In the paper \cite{Bun95}, Bunke considered not only the previously studied odd-odd signature $(p,q)=(1,1)$, but also was the first to consider the even-even signature $(p,q)=(0,0)$. 
In the latter case, the $\KK$-class $[(\lambda\pF)\times\D]$ of the Callias-type operator corresponds to the index of $(\D+\lambda\pF)_+$ in $\K_0(A)$. 
Bunke showed that this index can be computed by the index of a suitable Dirac operator on $S^1\times N$, which can also be obtained from a spectral flow pairing \cite[Theorem 2.16]{Bun95}. 
Below we will provide a similar result as a consequence of our Callias Theorem, describing the index in terms of the odd index pairing (see the Appendix, \S\ref{sec:odd_index_pairing}) and the spectral flow pairing. 

\begin{thm}
\label{thm:00_sf}
Let $(\lambda\pF)\times\D$ be a Callias-type operator of signature $(p,q)=(0,0)$. 
Suppose that $A$ is unital, and that $E = E_0 \oplus E_0$ with $\Z_2$-grading $\Gamma_\pF = 1\oplus(-1)$, where $E_0$ is finitely generated and projective. 
Assume furthermore that the potential $\pF$ has the form 
\begin{align}
\label{eq:00_pF}
\pF = \mattwo{0}{U^*}{U}{0} ,
\end{align}
for a function $U \colon M \to \mL_A(E_0)$ such that $U(y)$ is unitary for all $y\in N$. 
Then 
\[
\Index\big( (\D+\lambda\pF)_+ \big) = \Index\big( P_+(\D_N) U_N P_+(\D_N) \big) = - \SF_0(\D_N,U_N^* \D_N U_N) \in \K_0(A) ,
\]
where $U_N := U|_N \colon N \to \mL_A(E_0)$ is the restriction of $U$. 
Here $\SF_0(\D_N,U_N^* \D_N U_N)$ is defined to be the spectral flow of the family $[0,1] \ni t \mapsto (1-t) \D_N + t U_N^* \D_N U_N = \D_N + t U_N^*[\D_N,U_N]$. 
\end{thm}
\begin{proof}
The assumptions on $A$ and $E$ ensure that all operators on $E$ are compact. 
We now choose the operator $\pT$ with the positive spectral projection $P_+(\pT)$ given by 
\[
\pT := \mattwo{0}{1}{1}{0} , \qquad 
P_+(\pT) = \frac12 \mattwo{1}{1}{1}{1} , 
\]
and we compute 
\[
\relind_1\big(P_+(\pF_N),P_+(\pT)\big) 
= \left[ \mattwo{1}{0}{0}{U_{P_+(\pF_N)}} \right] = \big[U_{P_+(\pF_N)}\big] . 
\]
We thus obtain from the Callias \cref{thm:Callias} the equality 
\[
[(\lambda\pF)\times\D] 
= \relind_1\big(P_+(\pF_N),P_+(\pT)\big) \otimes_{C(N)} [\D_N] 
= \big[U_{P_+(\pF_N)}\big] \otimes_{C(N)} [\D_N] .
\]
Since $\pF_N$ is self-adjoint and unitary, we have $U_{P_+(\pF_N)} = U_N$. 
We know from \cref{coro:odd_index_pairing} that the pairing between odd $\K$-theory and odd $\K$-homology can be computed by: 
\[
[U_N] \otimes_{C(N)} [\D_N] 
= \Index\big( P_+(\D_N) U_N P_+(\D_N) \big) .
\]
Finally, from \cref{coro:sf_index_PuP} we obtain 
\[
\Index\big( P_+(\D_N) U_N P_+(\D_N) \big) 
= - \SF_0(\D_N,U_N^* \D_N U_N) .
\qedhere 
\]
\end{proof}

\paragraph{The odd-even pairing}
For the two signatures with $p+q=1$, we consider classes of Dirac--Schrödinger operators in $\KK^1(\C,A) \simeq \K_1(A)$. Of course, for $A=\C$, this $\K$-theory group is trivial. However, for nontrivial $A$, also $\K_1(A)$ may be nontrivial, and we may obtain nontrivial pairings $\K_p(C(N,A)) \otimes_{C(N)} \K^q(C(N)) \to \K_1(A)$. For instance, considering the odd-even signature $(p,q)=(1,0)$ we obtain the following result. 

\begin{thm}
\label{thm:10_sf}
Let $(\lambda\pF)\times\D$ be a Callias-type operator of signature $(p,q)=(1,0)$. 
Suppose that $A$ is unital, and that $E$ is finitely generated and projective. 
Assume furthermore that $\pF$ is sufficiently smooth, such that $P_+(\pF_N) \colon N \to \mL_A(E)$ is a differentiable function. 
Then 
\[
\big[ \D + \lambda \Gamma_\D \pF \big]
= \big[ P_+(\pF_N) \D_N P_+(\pF_N) \big] \in \K_1(A) .
\]
\end{thm}
\begin{proof}
As in the proof of \cref{thm:11_index}, it follows from the Callias \cref{thm:Callias} that 
\[
[(\lambda\pF)\times\D] 
= \big[ P_+(\pF_N) \big] \otimes_{C(N)} [\D_N] .
\]
By \cref{thm:even_index_pairing_unbdd}, this pairing equals $\big[ P_+(\pF_N) \D_N P_+(\pF_N) \big]$. 
\end{proof}

Now suppose that $A = C(S^1,B)$ and $E = C(S^1,V)$ for a Hilbert $B$-module $V$. 
Then the potential $\pF$ on $C_0(M,E) \simeq C(S^1,C_0(M,V))$ can be viewed as a family of operators $\{\pF_t\}_{t\in S^1}$ on $C_0(M,V)$. 
We consider the positive spectral projection $P_+(\pF_N(t))$ of the restriction $\pF_N \equiv \pF|_N$ at the point $t\in S^1$. 

\begin{coro}
\label{coro:10_sf}
In the setting of \cref{thm:10_sf}, suppose that $A = C(S^1,B)$ for a unital $C^*$-algebra $B$, and $E = C(S^1,V)$ for a finitely generated projective module $V$ over $B$. 
Then 
\[
\SF_0 \big\{ \D + \lambda \Gamma_\D \pF_t \big\}_{t\in S^1}
= \SF_0 \big\{ P_+(\pF_N(t)) \D_N P_+(\pF_N(t)) \big\}_{t\in S^1} \in \K_0(B) .
\]
\end{coro}
\begin{proof}
The standard isomorphism $\K_1(C(S^1,B)) \xrightarrow{\simeq} \K_0(B)$ is given by the spectral flow. 
The statement then follows from \cref{thm:10_sf}. 
\end{proof}

\subsection{Cobordism invariance}
\label{sec:cobordism}

\begin{coro}
\label{coro:cobordism}
Let $(\lambda\pS)\times\D$ be a Callias-type operator with signature $(p,q)$. 
Assume that the potential $\pS$ is uniformly invertible on all of $M$. 
Then 
\[
\relind_{p+1}\big(P_+(\pS_N(\cdot)),P_+(\pT(\cdot))\big) \otimes_{C(N)} [\D_N] 
= 0 \in \K_{p+q}(A) .
\]
\end{coro}
\begin{proof}
By \cref{prop:D_S_invertible}, the invertibility of the potential implies that the class of $(\lambda\pS)\times\D$ vanishes (for $\lambda$ large enough), so the result follows from the Callias \cref{thm:Callias}. 
\end{proof}

Let us consider two special cases of the above result, using the even and odd index pairings described in \S\ref{sec:classical_Callias}. 
First, we recover the well-known cobordism invariance of the index:
\begin{coro}
Let $M$ be a connected Riemannian manifold, and let $\D$ be an essentially self-adjoint elliptic first-order differential operator on a hermitian vector bundle $\bF\to M$ (which is ungraded). 
Let $K \subset M$ be a compact subset with smooth compact boundary $N$. 
Assume that $\D$ is of product form on a collar neighbourhood of $N$ (as in \cref{defn:D_product_form}, with $q=1$), and let $\D_N$ be the corresponding $\Z_2$-graded operator on $N$. 
Then 
\[
\Index\big( (\D_N)_+ \big) = 0 \in \K_0(\C) \simeq \Z .
\]
\end{coro}
\begin{proof}
We can consider the constant potential $\pF(x) = 1$ (for all $x\in M$) acting on the Hilbert space $E=\C$, 
so that $M$, $\D$, and $\pF$ satisfy the assumptions \ref{ass:A}-\ref{ass:C} with the signature $(p,q)=(1,1)$. 
We note that $P_+(\pF_N) = 1 \in C(N)$. 
As in the proof of \cref{thm:11_index}, we can choose $\pT=-1$ so that we now have 
\[
\relind_{p+1}\big(P_+(\pF_N(\cdot)),P_+(\pT(\cdot))\big) = \big[ P_+(\pF_N) \big] = [1] .
\]
By \cref{thm:even_index_pairing_unbdd}, the pairing $[1] \otimes_{C(N)} [\D_N]$ equals $\Index\big( (\D_N)_+ \big)$, and the statement then follows from \cref{coro:cobordism}. 
\end{proof}

Of course, the above result is nothing new, and could have been proven without the elaborate methods of our generalised Callias \cref{thm:Callias}. 
Somewhat more interesting is that in our approach one can easily formulate similar cobordism invariance results also in other signatures. 
As an example, we present here the cobordism invariance of the spectral flow pairing:
\begin{coro}
\label{coro:cobordism_sf}
Let $M$ be a connected Riemannian manifold, and let $\D$ be an odd essentially self-adjoint elliptic first-order differential operator on a $\Z_2$-graded hermitian vector bundle $\bF\to M$. 
Let $K \subset M$ be a compact subset with smooth compact boundary $N$. 
Assume that $\D$ is of product form on a collar neighbourhood of $N$ (as in \cref{defn:D_product_form}, with $q=0$), and let $\D_N$ be the corresponding operator on $N$. 
Let $U_N \in C^1(N) \otimes M_n(\C)$ be unitary, such that it can be extended to a unitary differentiable function on all of $M$. 
Then 
\[
\SF_0(\D_N,U_N^* \D_N U_N) = 0 \in \K_0(\C) \simeq \Z .
\]
\end{coro}
\begin{proof}
Let $U$ be the unitary extension of $U_N$ to $M$ and set $E = \C^{\oplus n} \oplus \C^{\oplus n}$. 
Then we obtain an invertible potential $\pF = \mattwo{0}{U^*}{U}{0}$ on $C_0(M,E)$, 
so that $M$, $\D$, and $\pF$ satisfy the assumptions \ref{ass:A}-\ref{ass:C} with the signature $(p,q)=(0,0)$. 
As in the proof of \cref{thm:00_sf}, we can choose $\pT$ such that 
\[
\relind_1\big(P_+(\pF_N),P_+(\pT)\big) = \big[U_{P_+(\pF_N)}\big] = [U_N] . 
\]
The statement then follows from \cref{coro:sf_index_PuP,coro:odd_index_pairing,coro:cobordism}. 
\end{proof}

\begin{remark}
The above results in particular apply to manifolds of the form $M = K \cup_N \big( [0,\infty) \times N \big)$. 
We can view $K$ as a compact manifold with boundary $N$. 
In this case, the assumption on $U_N$ is really about being able to extend $U_N$ to a unitary on $K$ (since a unitary extension on the cylindrical end always exists). 

We also point out that the above results continue to hold for the $\K_0(A)$-valued index or spectral flow. 
\end{remark}

\section{Toeplitz operators}
\label{sec:Toeplitz}

The index of a classical Dirac--Schrödinger operator $\pF\times\D$ can also be related to the index of a corresponding Toeplitz operator (given by $\pF$ compressed to the kernel of $\D$) \cite{GH96,Bun00}. 
A similar result by Braverman \cite{Bra19}, but shifted in degree, considered the case where $\D$ is equipped with a $\Z_2$-grading and where the potential is given by a smooth matrix-valued function $\pF$ on $M\times S^1$ (or equivalently, by a family $\{\pF_t\}_{t\in S^1}$ of potentials on $M$), 
and showed that the index of the Dirac--Schrödinger operator corresponds to the \emph{spectral flow} of a \emph{family} of Toeplitz operators on $S^1$. 
Braverman applied this equality to the bulk-edge correspondence for topological insulators. 
The appendix \cite{Bra19_vdD} to Braverman's paper explains Braverman's main result in terms of the Kasparov product. 
In this section, we generalise these results and provide a unified approach for arbitrary signatures $(p,q)$. 
As in \S\ref{sec:classical_Callias}, we consider only potentials on finitely generated projective modules over unital $C^*$-algebras, and to emphasise this assumption we denote the potential by $\pF$ instead of $\pS$. 

Throughout this section, we will assume the following: 
\begin{assumption*}[(T)]
\customlabel{ass:T}{(T)}
Let $A$ be a (trivially graded) unital $C^*$-algebra, and let $E$ be a finitely generated projective Hilbert $A$-module. 
Let $M$ be a connected Riemannian manifold (typically noncompact), and let $\D$ be an essentially self-adjoint elliptic first-order differential operator on a hermitian vector bundle $\bF\to M$.
Let $\pF \colon M \to \mL_A(E)$ be a bounded differentiable function, which is uniformly invertible outside of a compact subset of $M$, such that $\pF(x)=\pF(x)^*$ for all $x\in M$. 
Moreover, we consider self-adjoint unitaries $\gradF$ and $\gradD$ as in assumption \ref{ass:A}, depending on the signature $(p,q)$. 
We assume furthermore that the following conditions are satisfied:
\begin{itemize}
\item[(T1)]
\customlabel{ass:T1}{(T1)}
Zero is an isolated point of the spectrum of $\D \equiv 1\otimes_d\D$ on $L^2(M,E\otimes\bF)$. 
\item[(T2)]
\customlabel{ass:T2}{(T2)} 
The operator $\big[\D,\pF\big]_\pm$ vanishes at infinity (in the sense that the function $M\to\R$, $x \mapsto \big\| \big[\D,\pF\big]_\pm(x) \big\|$ vanishes at infinity). 
\end{itemize}
\end{assumption*}

\begin{remark}
\label{rem:Toeplitz_Fredholm}
\begin{itemize}
\item 
We note that assumption \ref{ass:T} implies the assumptions \ref{ass:A} and \ref{ass:B} (taking $\pS \equiv \pF$), and therefore we may freely use all the results from \cref{sec:gen_DS}. In particular, using \ref{ass:T2}, we know from \cref{prop:Fredholm_all_lambda} that the product operator $\pF\times\D$ is Fredholm. 
\item 
We note that, while assumption \ref{ass:C2} is trivially satisfied, we do not consider any hypersurface $N\subset M$ and therefore do not require any product structure as in \ref{ass:C1}. 
\end{itemize}
\end{remark}

\begin{remark}
\label{rem:Toeplitz_vanishing}
In the setting of assumption \ref{ass:T}, it follows from \cref{prop:D-gap} that the Dirac--Schrödinger operators $\pF\times_{p,1}\D$ with $q=1$ represent the trivial class in $\K$-theory (this vanishing was already observed by Bunke \cite[Proposition 2.8]{Bun95} in the signature $(p,q)=(1,1)$). 
For this reason, we will ignore the case $q=1$ in the following definition. 
For the two signatures $(p,q)=(0,0)$ and $(p,q)=(1,0)$ we will see nontrivial examples in \S\ref{sec:Toeplitz_00} and \S\ref{sec:Toeplitz_10}. 
\end{remark}

\begin{defn}
\label{defn:Toeplitz}
Let $\mN \subset L^2(M,E\otimes\bF)$ denote the kernel of $\D \equiv 1\otimes_d\D$, and consider the corresponding orthogonal projection denoted by $P_0 \equiv P_0(\D) \colon L^2(M,E\otimes\bF) \to \mN$. 
We define the \emph{Toeplitz operator} $T_\pF \colon \mN \to \mN$ by compressing the product operator onto the range of $P_0$:
\[
T_\pF := P_0 \circ (\pF\times\D) \colon \mN \to L^2(M,E\otimes\bF) \to \mN . 
\]
We have the following explicit formulas:
\[
\begin{aligned}
\text{even-even signature } (p,q)=(0,0): \quad& T_\pF = P_0 \pF P_0 ;\\
\text{odd-even signature } (p,q)=(1,0): \quad& T_\pF = P_0 \gradD\pF P_0 . 
\end{aligned}
\]
For convenience, we shall sometimes write $T_\pF = P_0 \til\pF P_0$, with 
$\til\pF := \pF$ if $(p,q)=(0,0)$ and $\til\pF := \Gamma_\D \pF$ if $(p,q)=(1,0)$. 
\end{defn}

\begin{remark}
\label{rem:Toeplitz_trivial}
Our main aim below is to prove that $[\pF \times_{p,q} \D] = [T_\pF] \in \K_{p+q}(A)$ for the case $q=0$. 
The same equality in fact trivially also holds for the case $q=1$. 
Indeed, as mentioned in \cref{rem:Toeplitz_vanishing}, we have for $q=1$ the vanishing result $[\pF \times_{p,1} \D] = 0 \in \K_{p+1}(A)$, and we can show directly that the class of the corresponding Toeplitz operator also vanishes (to be precise, one should first check that the Toeplitz operator is self-adjoint and Fredholm, but this follows as in the proof of \cref{thm:Toeplitz}). 

First, in the odd-odd signature $(1,1)$, the Toeplitz operator is given on $\mN\oplus\mN$ by 
\[
T_\pF = \mattwo{0}{i P_0 \pF P_0}{-i P_0 \pF P_0}{0} .
\]
Its class in $\K$-theory is given by $\Index(-i P_0 \pF P_0) = \Index(P_0\pF P_0) \in \K_0(A)$. Since $P_0 \pF P_0$ is self-adjoint, this index vanishes, so we have $[T_\pF] = 0$. (Alternatively, we can observe that $T_\pF$ has a Clifford symmetry which interchanges the two copies of $\mN$, and obtain the vanishing of its class from \cref{lem:Fred_Cliff-symm}.)

Second, in the even-odd signature $(0,1)$, the Toeplitz operator is given by 
\[
T_\pF = P_0 \pF P_0 
= \mattwo{0}{P_0 \pF_- P_0}{P_0 \pF_+ P_0}{0} .
\]
We now observe that $\Gamma_\pF$ yields a Clifford symmetry for $T_\pF$, so by \cref{lem:Fred_Cliff-symm} we again have $[T_\pF] = 0$. 
\end{remark}

\begin{prop}
\label{prop:cpt_comm_P_pS}
The commutator $[P_0,\pF]$ on $L^2(M,E\otimes\bF)$ is compact. 
\end{prop}
\begin{proof}
The proof is similar to the proofs of \cite[Lemma 2.4]{Bun00} and \cite[Lemma 5.2]{Bra19}, but we take care that it works in both signatures. 

First, by Rellich's Lemma, the operator $\D$ on $L^2(M,\bF)$ has locally compact resolvents. 
Since $A$ is unital and $E$ is finitely generated projective, the identity operator on $E$ is compact, and it follows that $\D \equiv 1\otimes_d\D$ on $L^2(M,E\otimes\bF)$ also has locally compact resolvents. 
We compute 
\begin{align*}
\big[ \pF , (1+\D^2)^{-1} \big] 
&= (1+\D^2)^{-1} \big( \D [\D,\pF]_\pm - (-1)^p [\D,\pF]_\pm \D \big) (1+\D^2)^{-1} .
\end{align*}
Since $\D$ has locally compact resolvents and $[\D,\pF]_\pm$ vanishes at infinity (assumption \ref{ass:T2}), it follows that $[\D,\pF]_\pm (1+\D^2)^{-1}$ and therefore also $\big[ \pF , (1+\D^2)^{-1} \big]$ is compact. 

By assumption \ref{ass:T1}, there exists $\epsilon>0$ such that $P_0 = \phi(\D)$ for any even function $\phi\in C_c(-\epsilon,\epsilon)$ with $\phi(0)=1$. 
By Stone--Weierstrass, the subalgebra of even functions in $C_0(\R)$ is generated by the function $t \mapsto (1+t^2)^{-1}$. 
Thus it follows that also $[P_0,\pF] = [\phi(\D),\pF]$ is compact. 
\end{proof}

\begin{thm}
\label{thm:Toeplitz}
Consider the setting of assumption \ref{ass:T} with signature $(p,q)$. 
Then the Toeplitz operator $T_\pF$ is a (bounded) self-adjoint Fredholm operator, 
and we have the equality
\[
[T_\pF] = [\pF\times_{p,q}\D] \in \KK^{p+q}(\C,A) . 
\]
\end{thm}
\begin{proof}
Since the case $q=1$ is trivial (see \cref{rem:Toeplitz_trivial}), we focus on the case $q=0$ and use the notation of \cref{defn:Toeplitz}. 
Nevertheless, the proof also works in the signatures $(p,1)$, requiring at most some notational changes. 

Since $\til\pF$ is bounded and self-adjoint in each signature, the Toeplitz operator $T_\pF = P_0 \til\pF P_0$ is also bounded and self-adjoint. 
We mentioned in \cref{rem:Toeplitz_Fredholm} that $\pF\times\D$ is Fredholm. 
We need to show that $\pF\times\D$ and $T_\pF = P_0 (\pF\times\D) P_0$ represent the same class in $\KK^{p+q}(\C,A)$. 
Let us write $Q := 1-P_0$. 
From \cref{prop:cpt_comm_P_pS}, we know that 
\[
P_0 (\pF\times\D) Q = P_0 \til\pF Q = \big[ P_0 , \til\pF \big] Q = \begin{cases} \big[ P_0 , \pF \big] Q , & (p,q)=(0,0) , \\ \Gamma_\D \big[ P_0 , \pF \big] Q , & (p,q)=(1,0) \end{cases}
\]
is compact. Similarly, also $Q (\pF\times\D) P_0$ is compact. 
In particular, $P_0 (\pF\times\D) P_0 \oplus Q (\pF\times\D) Q$ is a compact perturbation of the Fredholm operator $\pF\times\D$, which implies that both $T_\pF = P_0 (\pF\times\D) P_0$ and $Q (\pF\times\D) Q$ are Fredholm, 
and that 
\[
[\pF\times\D] = [P_0 (\pF\times\D) P_0] + [Q (\pF\times\D) Q] = [T_\pF] + [Q (\pF\times\D) Q] . 
\]
Rescaling $\pF$ by a positive constant $\lambda>0$ and using \cref{prop:Fredholm_all_lambda}, we find similarly that $Q \big( (\lambda\pF)\times\D \big) Q$ is Fredholm for any $\lambda>0$. Moreover, in the case $\lambda=0$ we know that $Q \big( 0\times\D \big) Q$ is invertible (on the complement of the kernel of $\D$). Thus $Q (\pF\times\D) Q$ is homotopic (within the space of Fredholm operators with domain $\Dom\D$) to an invertible operator, and therefore $[Q (\pF\times\D) Q] = 0$. 
\end{proof}

\subsection{The even-even signature}
\label{sec:Toeplitz_00}

Toeplitz operators with the even-even signature $(p,q)=(0,0)$ were considered by Bunke in \cite{Bun00}. 
(In fact, Bunke also considered the odd-odd signature $(p,q)=(1,1)$, which he showed to be trivial, as mentioned in \cref{rem:Toeplitz_vanishing}.) 
Here we shall prove a generalisation of \cite[Proposition 2.6]{Bun00}. 

\begin{prop}
\label{prop:Toeplitz_00}
Let assumption \ref{ass:T} with signature $(p,q)=(0,0)$ be satisfied by the operators 
\[
\D = \mattwo{0}{\D_-}{\D_+}{0}
\quad\text{and}\quad 
\pF = \mattwo{0}{f^*}{f}{0} .
\]
Define $T_f^\pm := P_0^\pm f P_0^\pm$, where $P_0^\pm \equiv P_0(\D_\pm)$ denotes the projection onto the kernel of $\D_\pm$. 
Then the image of $[\pF\times_{0,0}\D] \in \KK^0(\C,A)$ in $\K_0(A)$ under the standard isomorphism is given by 
\[
[\pF\times_{0,0}\D] 
\xmapsto{\simeq} \Index \left( \mattwo{f}{-\D_-}{\D_+}{f^*} \right) 
= \Index(T_f^+) - \Index(T_f^-) 
\in \K_0(A) .
\]
\end{prop}
\begin{proof}
On the decomposition 
\[
L^2(M,E\otimes\bF) = L^2(M,E_+\otimes\bF_+) \oplus L^2(M,E_-\otimes\bF_-) \oplus L^2(M,E_-\otimes\bF_+) \oplus L^2(M,E_+\otimes\bF_-) ,
\]
we can write 
\begin{align*}
\pF\times_{0,0}\D &= \D + \pF = \matfour{0&0&f^*&\D_-}{0&0&-\D_+&f}{f&-\D_-&0&0}{\D_+&f^*&0&0} , & 
P_0 &= \matfour{P_0^+&0&0&0}{0&P_0^-&0&0}{0&0&P_0^+&0}{0&0&0&P_0^-} . 
\end{align*}
Under the standard isomorphism $\KK^0(\C,A) \xrightarrow{\simeq} \K_0(A)$, the class $[\pF\times_{0,0}\D] \in \KK^0(\C,A)$ is given by the index of $(\pF\times_{0,0}\D)_+ = \mattwo{f}{-\D_-}{\D_+}{f^*}$. 
Furthermore, from \cref{thm:Toeplitz} we have the equality $[T_\pF] = [(\pF\times_{0,0}\D)]$, and therefore the index of $(\pF\times_{0,0}\D)_+$ is equal to the index of 
\[
\mattwo{P_0^+}{0}{0}{P_0^-} \mattwo{f}{-\D_-}{\D_+}{f^*} \mattwo{P_0^+}{0}{0}{P_0^-} 
= \mattwo{P_0^+ f P_0^+}{0}{0}{P_0^- f^* P_0^-} . 
\]
The index of the latter is indeed given by 
\begin{align*}
\Index( P_0^+ f P_0^+ ) + \Index( P_0^- f^* P_0^- ) 
&= \Index( P_0^+ f P_0^+ ) - \Index( P_0^- f P_0^- ) \\
&= \Index(T_f^+) - \Index(T_f^-) .
\qedhere
\end{align*}
\end{proof}

\begin{remark}
In the special case where $P_0^-=0$, we obtain the equality 
\[
\Index\left( \mattwo{f}{-\D_-}{\D_+}{f^*} \right) = \Index(P_0^+fP_0^+) . 
\]
This situation occurs for instance in the setting of \cite{GH96}, where $M$ is a strongly pseudoconvex domain in $\C^n$ with the Dolbeault--Dirac operator $\D$ (see \S\ref{sec:pseudoconvex} below). 
\end{remark}

\subsection{The odd-even signature}
\label{sec:Toeplitz_10}

Now let us consider the odd-even signature $(p,q)=(1,0)$. 
Since $p+q=1$, we therefore consider classes of Dirac--Schrödinger operators in $\KK^1(\C,A) \simeq \K_1(A)$. Of course, for $A=\C$, this $\K$-theory group is trivial. In order to obtain a nontrivial index, the easiest example to consider would be to take $A = C(S^1)$. Indeed, such a setting has already been considered by Braverman \cite{Bra19}, who related the index of a Dirac--Schrödinger operator to the spectral flow of a family of Toeplitz operators (parametrised by the circle). 

Let us introduce some notation. 
Since $q=0$, the operator $\D$ on the bundle $\bF = \bF_+ \oplus \bF_- \to M$ is $\Z_2$-graded, and we write 
\[
\D = \mattwo{0}{\D_-}{\D_+}{0} .
\]
We denote again by $P_0^\pm \equiv P_0(\D_\pm)$ the projection onto the kernel of $\D_\pm$. 

Now suppose that $A = C(S^1,B)$ and $E = C(S^1,V)$ for a Hilbert $B$-module $V$. 
Then the potential $\pF$ on $C_0(M,E) \simeq C(S^1,C_0(M,V))$ can be viewed as a family of operators $\{\pF_t\}_{t\in S^1}$ on $C_0(M,V)$. 
We introduce the corresponding Toeplitz operators 
\[
T_t^\pm := P_0^\pm \pF_t P_0^\pm 
\quad\text{on}\quad 
\Ker\D_\pm \subset L^2(M,V\otimes\bF) .
\]
The following result is a generalisation of \cite[Theorem 2.1]{Bra19} (see also the reformulation in \cite[Proposition A.1]{Bra19}). 

\begin{prop}
\label{prop:Toeplitz_10}
Consider the setting of assumption \ref{ass:T} with signature $(p,q) = (1,0)$. 
Assume that $A = C(S^1,B)$ for some (trivially graded) unital $C^*$-algebra $B$, and that $E = C(S^1,V)$ for some finitely generated projective module $V$ over $B$.
Then the image of $[\pF \times_{1,0} \D] \in \KK^1(\C,C(S^1,B))$ in $\K_0(B)$ under the standard isomorphism is given by 
\[
[\pF \times_{1,0} \D] 
\xmapsto{\simeq} \SF_0\big(\{\pF_t \times_{1,0} \D\}_{t\in S^1}\big)
= \SF_0\big(\{T_t^+\}_{t\in S^1}\big) - \SF_0\big(\{T_t^-\}_{t\in S^1}\big) . 
\]
\end{prop}
\begin{proof}
We recall that the product operator is an ungraded operator of the form
\[
\pF\times_{1,0}\D := \D + \gradD \pF = \mattwo{\pF}{\D^-}{\D^+}{-\pF} , 
\]
describing a class $[\pF\times_{1,0}\D] \in \KK^1(\C,A)$. 
From \cref{thm:Toeplitz} we have $[\pF\times_{1,0}\D] = [T_\pF]$, where the Toeplitz operator $T_\pF$ is of the form 
\[
T_\pF = P_0 (\gradD \pF) P_0 = \mattwo{P_0^+ \pF P_0^+}{0}{0}{- P_0^- \pF P_0^-} .
\]
This operator $T_\pF$ on $\mN = E\otimes\Ker\D \simeq C(S^1,V\otimes\Ker\D)$ is given by a family of operators $\{T_{\pF_t}\}_{t\in S^1}$ on the Hilbert $B$-module $V\otimes\Ker\D = (V\otimes\Ker\D_+)\oplus(V\otimes\Ker\D_-)$, where 
\[
T_{\pF_t} = \mattwo{P_0^+ \pF_t P_0^+}{0}{0}{- P_0^- \pF_t P_0^-} = \mattwo{T_t^+}{0}{0}{- T_t^-} .
\]
Under the isomorphism $\KK^1(\C,C(S^1,B)) \xrightarrow{\simeq} \K_0(B) $ given by the spectral flow, the $\KK$-classes of $\pF\times_{1,0}\D$ and $T_\pF$ correspond to 
\begin{align*}
[\pF\times_{1,0}\D] 
&\xmapsto{\simeq} \SF_0 \big( \{\pF_t\times_{1,0}\D\}_{t\in S^1} \big) , \\ 
[T_\pF] 
&\xmapsto{\simeq} \SF_0 \left( \left\{ \mattwo{T_t^+}{0}{0}{- T_t^-} \right\}_{t\in S^1} \right) 
= \SF_0 \big( \{T_t^+\}_{t\in S^1} \big) - \SF_0 \big( \{T_t^-\}_{t\in S^1} \big) .
\qedhere
\end{align*}
\end{proof}

\begin{remark}
In the special case where $P_0^-=0$, the above proposition yields the equality $[\pF \times_{1,0} \D] \xmapsto{\simeq} \SF_0\big(\{P_0^+ \pF_t P_0^+\}_{t\in S^1}\big)$. 
This situation occurs for instance for the Dolbeault--Dirac operator on a strongly pseudoconvex domain, as described in \cite[\S3]{Bra19} (see also \S\ref{sec:pseudoconvex} below). 
\end{remark}

\subsection{Toeplitz theorems}
\label{sec:Toeplitz_Callias}

The next two results combine our \cref{thm:Toeplitz} on Toeplitz operators with our Callias \cref{thm:Callias}. 
Thus, let $M$, $\D$ and $\pF$ now satisfy assumptions \ref{ass:T} and \ref{ass:C} with signature $(p,0)$, and let $N\subset M$ denote the hypersurface from assumption \ref{ass:C}. 
As explained in \cref{rem:Toeplitz_Fredholm}, we know from \cref{prop:Fredholm_all_lambda} that $\pF\times\D$ is a Callias-type operator. 

Let us first consider the even-even signature $(p,q) = (0,0)$. 
We then obtain the following index equality between the Toeplitz operators constructed from $\pF$ and $P_0(\D)$ on the one hand, and $\pF_N \equiv \pF|_N$ and $P_+(\D_N)$ on the other hand. 
\begin{coro}[Index theorem for Toeplitz operators]
\label{coro:Toeplitz_index}
Let assumptions \ref{ass:T} and \ref{ass:C} with signature $(0,0)$ be satisfied by the operators 
\[
\D = \mattwo{0}{\D_-}{\D_+}{0}
\quad\text{and}\quad 
\pF = \mattwo{0}{U^*}{U}{0} .
\]
Assume furthermore that $U_N \equiv U|_N$ is unitary for all $y\in N$. 
Then, under the standard isomorphism, the image of $[\pF\times_{0,0}\D] \in \KK^0(\C,A)$ in $\K_0(A)$ is given by 
\[
\Index\big( P_0(\D_+) U P_0(\D_+) \big) - \Index\big( P_0(\D_-) U P_0(\D_-) \big)
= \Index\big( P_+(\D_N) U_N P_+(\D_N) \big) .
\]
\end{coro}
\begin{proof}
The statement follows immediately from \cref{thm:00_sf,prop:Toeplitz_00}. 
\end{proof}

Next, let us consider the odd-even signature $(p,q) = (1,0)$. 
We now obtain a spectral flow equality for families of Toeplitz operators on $M$ and $N$, as follows. 
\begin{coro}[Spectral flow theorem for Toeplitz operators]
\label{coro:Toeplitz_sf}
Assume that $A = C(S^1,B)$ for some (trivially graded) unital $C^*$-algebra $B$, and that $E = C(S^1,V)$ for some finitely generated projective module $V$ over $B$.
Let assumptions \ref{ass:T} and \ref{ass:C} with signature $(p,q) = (1,0)$ be satisfied by the operators 
\[
\D = \mattwo{0}{\D_-}{\D_+}{0}
\quad\text{and}\quad 
\pF = \{\pF_t\}_{t\in S^1} .
\]
Assume furthermore that $\pF$ is sufficiently smooth, so that $P_+(\pF_N(t)) \colon N \to \mL_A(E)$ is differentiable for each $t\in S^1$, where $P_+(\pF_N(t))$ is the positive spectral projection of the restriction $\pF_N \equiv \pF|_N$ at the point $t\in S^1$. 
Then, under the standard isomorphism, the image of $[\pF \times_{1,0} \D] \in \KK^1(\C,C(S^1,B))$ in $\K_0(B)$ is given by 
\begin{multline*}
\SF_0\big( \big\{P_0(\D_+) \pF_t P_0(\D_+) \big\}_{t\in S^1} \big) - \SF_0\big( \big\{P_0(\D_-) \pF_t P_0(\D_-) \big\}_{t\in S^1} \big) \\
= \SF_0\big( \big\{ P_+(\pF_N(t)) \D_N P_+(\pF_N(t)) \big\}_{t\in S^1} \big) 
\in \K_0(B) .
\end{multline*}
\end{coro}
\begin{proof}
Noting that the standard isomorphism $\K_1(C(S^1,B)) \simeq \K_0(B)$ is given by the (even) spectral flow, the statement follows from \cref{thm:10_sf,prop:Toeplitz_10}. 
\end{proof}

\begin{remark}
\cref{coro:Toeplitz_sf} yields a spectral flow analogue of the index equality from \cref{coro:Toeplitz_index}, with the notable difference that on the right-hand side, the potential $\pF_N$ and the Dirac operator $\D_N$ have swapped roles (the positive spectral projection now comes from the potential $\pF_N$ instead of from the elliptic operator $\D_N$). 
\end{remark}

\subsection{Strongly pseudoconvex domains}
\label{sec:pseudoconvex}

For the Dolbeault--Dirac operator on a strongly pseudoconvex domain, the kernel of $\D_-$ is trivial. In this case, \cref{coro:Toeplitz_index} yields the equality 
\[
\Index\big( P_0(\D_+) U P_0(\D_+) \big) 
= \Index\big( P_+(\D_N) U_N P_+(\D_N) \big) .
\]
Such an equality is indeed known (see e.g.\ \cite[Theorem 11.7.5]{Higson-Roe00}) and holds more generally for Dirac operators on compact Spin$^c$ manifolds with boundary (\cite[Proposition 6.4]{BE21}). 
To provide a concrete example of our results, we consider here the setting of Toeplitz operators on a strongly pseudoconvex domain. These have been studied previously in \cite{GH96} (for the signature $(0,0)$) and \cite{Bra19} (for the signature $(1,0)$), to which we refer for more details and further references (see also the exposition in \cite[\S11.7]{Higson-Roe00}). 

Consider a strongly pseudoconvex domain $\overline{M} \subset \C^n$ with smooth boundary $N := \partial M$. 
Its interior $M$ is a complete Riemannian manifold equipped with the Bergman metric. 
We consider the Dolbeault--Dirac operator $\D = \bar\partial + \bar\partial^*$ on the space $\Omega^{(n,*)}(M)$ of differential forms of type $(n,*)$, which is $\Z_2$-graded: 
\[
\Omega^{(n,*)}(M) = \bigoplus_{k=0}^n \Omega^{(n,k)}(M) = \Big( \bigoplus_{k \textrm{ even}} \Omega^{(n,k)}(M) \Big) \oplus \Big( \bigoplus_{k \textrm{ odd}} \Omega^{(n,k)}(M) \Big) .
\]
The Dolbeault--Dirac operator satisfies $\ker(\D) = \ker(\D_+) \subset \Omega^{(n,0)}(M)$, and the projection $P_0(\D) = P_0(\D_+)$ onto the kernel of $\D$ is precisely the projection onto the space $\Ker\left( \bar\partial|_{\Omega^{(n,0)}} \right)$ of holomorphic forms in (the Hilbert space completion of) $\Omega^{(n,0)}(M)$. 
In the following two examples, we consider Toeplitz operators on the strongly pseudoconvex domain $\overline{M}$ in both of the nontrivial signatures. 

\paragraph{Signature $(p,q)=(0,0)$.}
Let $A=\C$ and $E=\C^{2l}$, and consider a matrix-valued potential $\pF \colon \overline{M} \to M_{2l}(\C)$ satisfying assumption \ref{ass:T} with the even-even signature $(p,q)=(0,0)$.  
In particular, $\pF$ is $\Z_2$-graded and can be written as 
\[
\pF = \mattwo{0}{f^*}{f}{0} 
\]
for some $M_l(\C)$-valued differentiable function $f$ on $\overline{M}$ which is invertible near the boundary $N$. 
Assuming for simplicity that the restriction $f_N := f|_N$ is unitary, we obtain from \cref{coro:Toeplitz_index} the equality 
\begin{align}
\label{eq:Toeplitz_index_Dolbeault}
\Index(P_0(\D) f P_0(\D)) 
= \Index\big( P_+(\D_N) f_N P_+(\D_N) \big) 
\in \Z ,
\end{align}
using that the Dolbeault--Dirac operator satisfies $P_0(\D) = P_0(\D_+)$ and $P_0(\D_-) = 0$. 
Thus we recover the equality of the index of the Toeplitz operator on the interior $M$ with the index of the Toeplitz operator on the boundary $\partial M$, as stated in \cite[Theorem 11.7.5]{Higson-Roe00}. 

\begin{example}
Consider now the special case where $M=\bD$ is the open unit disk in $\C$ with boundary $N=S^1$, and let $l=1$. 
The boundary operator $\D_N$ corresponds to the standard Dirac operator $-i \partial_\theta$ on the circle $N=S^1$. Thus, the index of the Toeplitz operator $T_f^+ = P_0(\D) f P_0(\D)$ on $\bD$ (given by compressing multiplication by $f$ to the holomorphic $L^2$-functions on $\bD$) equals the index of the classical Toeplitz operator $P_+ f|_{S^1} P_+$ appearing in the well-known Gohberg--Krein index formula (where $P_+ \equiv P_+(-i\partial_\theta)$ denotes the Szegö projection onto the Hardy space $\mH^2(S^1) \subset L^2(S^1)$). 

For instance, if $f(z) = z$, then the kernel of $T_z^+$ is trivial (if $\varphi$ is a holomorphic function, then so is $z\cdot\varphi$), whereas the cokernel is one-dimensional (it consists of the constant functions). Hence $\Index T_z^+ = -1$. 
Similarly, on the space $\Ran P_+ = \overline\Span\{z^k \mid k\geq0\}$, multiplication by $z$ is the unilateral right shift. Again, the kernel of $P_+ z|_{S^1} P_+$ is trivial and the cokernel consists of the constant functions, so also $\Index P_+ f|_{S^1} P_+ = -1$. 
\end{example}

\paragraph{Signature $(p,q)=(1,0)$.}
Now let $A=C(S^1)$ and $E = C(S^1,\C^l)$, and consider a smooth family $\pF = \{\pF_t\}_{t\in S^1}$ of matrix-valued potentials $\pF_t\colon \overline{M} \to M_l(\C)$ satisfying assumption \ref{ass:T} with the odd-even signature $(p,q)=(1,0)$. 
Then from \cref{coro:Toeplitz_sf} we obtain 
\begin{align}
\label{eq:Toeplitz_sf_Dolbeault}
\SF_0\big( \big\{ P_0(\D) \pF_t P_0(\D) \big\}_{t\in S^1} \big) 
= \SF_0\big( \big\{ P_+(\pF_N(t)) \D_N P_+(\pF_N(t)) \big\}_{t\in S^1} \big) 
\in \Z ,
\end{align}
using that the Dolbeault--Dirac operator satisfies $P_0(\D) = P_0(\D_+)$ and $P_0(\D_-) = 0$. 
This yields a spectral flow analogue of the Toeplitz index equality \eqref{eq:Toeplitz_index_Dolbeault}, with the notable difference that on the right-hand side, the potential $\pF_N$ and the Dirac operator $\D_N$ have swapped roles. 

\begin{example}
Consider again the special case where $M=\bD$ is the open unit disk in $\C$ with boundary $N=S^1$, and now let $l=2$. 
Consider smooth functions $\chi_\pm \colon \R \to [0,1]$ with the following properties:
\begin{itemize}
\item $\supp \chi_+ \subset [\frac12,\infty)$ and $\supp \chi_- \subset (-\infty,\frac12]$, and 
\item $\chi_+(t) = 1$ for $t\geq1$ and $\chi_-(t) = 1$ for $t\leq0$.
\end{itemize}
We define a third function $\chi_0 \in C_c^\infty(\R)$ by $\chi_0 := \sqrt{1-\chi_+^2-\chi_-^2}$.
Now let the (ungraded) potential $\pF \in M_2\big(C^\infty([0,1],C^\infty(M))\big)$ be given by 
\[
\pF = \mattwo{\chi_-^2 + \chi_+^2 - \chi_0^2}{2\chi_0 (\chi_- + \chi_+ z)}{2\chi_0 (\chi_- + \chi_+ \bar z)}{\chi_0^2 - \chi_-^2 - \chi_+^2} . 
\]
Since $\pF(0) = \pF(1)$, we may also view $\pF$ as an element in $M_2\big(C^\infty(S^1,C^\infty(M))\big)$. 
The restriction $\pF_N$ to the boundary $N = S^1$ is a self-adjoint unitary, and we have 
\[
P_+(\pF_N) = \mattwo{\chi_-^2+\chi_+^2}{\chi_0(\chi_-+\chi_+ z)}{\chi_0(\chi_-+\chi_+ \bar z)}{\chi_0^2} . 
\]
(It is no coincidence that this projection resembles the projection $P_u$ from \cref{lem:proj_P_u}.) 
By writing $z = e^{i\theta}$, we observe that $\bar z [\D_N,z] = 1$. 
By a similar calculation as in \cref{eq:P_u-D-P_u}, we then find that the family $\big\{ P_+(\pF_N(t)) \D_N P_+(\pF_N(t)) \big\}_{t\in[0,1]}$ is unitarily equivalent to the family $\big\{ \D_N + \chi_+^2 \big\}_{t\in[0,1]}$, and therefore 
\[
\SF_0\big( \big\{ P_+(\pF_N(t)) \D_N P_+(\pF_N(t)) \big\}_{t\in[0,1]} \big) 
= \SF_0\big( \big\{ \D_N + \chi_+^2 \big\}_{t\in[0,1]} \big) 
= 1 . 
\]

Next, let us compute the spectral flow of 
\[
\hat\pF := P_0(\D) \pF P_0(\D) = \mattwo{\chi_-^2 + \chi_+^2 - \chi_0^2}{2\chi_0 \big(\chi_- + \chi_+ P_0(\D) z P_0(\D)\big)}{2\chi_0 \big(\chi_- + \chi_+ P_0(\D) \bar z P_0(\D)\big)}{\chi_0^2 - \chi_-^2 - \chi_+^2} .
\]
The range of $P_0(\D)$ consists of the holomorphic $L^2$-functions on $\bD$, so we may write $\mH := \Ran P_0(\D) = \overline\Span\{z^k \mid k\geq0\}$. 
On the orthonormal basis $\{e_k\}_{k=0}^\infty$ of $\mH$ given by $e_k(z) := \sqrt{\frac{k+1}{\pi}} z^k$, we can consider the unilateral right shift $S \colon e_k \mapsto e_{k+1}$ and the diagonal operator $K \colon e_k \mapsto k e_k$. Then we can write 
\[
P_0(\D) z P_0(\D) = \sqrt{\frac{K}{K+1}} S 
\quad\text{and}\quad 
P_0(\D) \bar z P_0(\D) = \sqrt{\frac{K+1}{K+2}} S^* . 
\]
For $t\in[0,\frac12]$ we have $\chi_+(t)=0$, and we observe that $\hat\pF(t)$ has a Clifford symmetry
\[
\gamma = \mattwo{0}{i}{-i}{0} .
\]
Hence, by \cref{lem:Fred_Cliff-symm} we have $\SF_0 \big( \big\{ \hat\pF(t) \big\}_{t\in[0,\frac12]} \big) = 0$. 
For $t\in[\frac12,1]$ we have $\chi_-(t)=0$, and since $S^* e_0 = 0$ we see that 
\[
v = \begin{pmatrix}e_0\\0\end{pmatrix} 
\]
is an eigenvector of $\hat \pF(t)$ with eigenvalue $\lambda(t) = \chi_+(t)^2 - \chi_0(t)^2$, where $\lambda(\frac12) = -1$ and $\lambda(1) = 1$. This eigenvector therefore contributes $+1$ to the spectral flow of $\hat\pF$. 
On the orthogonal complement $v^\perp = (e_0)^\perp \oplus \mH$, the operator $\hat\pF(t)|_{v^\perp}$ is invertible, so that there is no further spectral flow. 
Hence we have computed 
\[
\SF_0\big( \big\{ P_0(\D) \pF_t P_0(\D) \big\}_{t\in[0,1]} \big) 
= \SF_0\big( \big\{ \hat\pF(t) \big\}_{t\in[\frac12,1]} \big) 
= 1 , 
\]
and we have explicitly verified the spectral flow equality \eqref{eq:Toeplitz_sf_Dolbeault} in this concrete example. 
\end{example}

\appendix

\section{Fredholm operators and spectral flow}
\label{sec:prelim}

\subsection{Fredholm operators and \KK-theory}
\label{sec:Fredholm_KK} 

An adjointable operator $F \in \mL_A(E)$ is called \emph{Fredholm} if there exists a \emph{parametrix} $G \in \mL_A(E)$ such that $GF - 1$ and $FG - 1$ are compact operators on $E$. 
If $F$ is Fredholm, we denote by $\Index(F) \in \K_0(A)$ the $\K_0(A)$-valued index of $F$; for the definition of this index, we refer to \cite[\S2.2]{vdD19_Index_DS} and references therein. 

A regular operator $\D$ on a Hilbert $A$-module $E$ is called \emph{Fredholm} if there exist a \emph{left parametrix} $Q_l$ and a \emph{right parametrix} $Q_r$ such that (the closure of) $Q_l\D - 1$ and $\D Q_r - 1$ are compact endomorphisms on $E$. Then also the bounded transform $\D(1+\D^*\D)^{-\frac12}$ is Fredholm (see \cite[Lemma 2.2]{Joa03}), and we define $\Index(\D) := \Index(\D(1+\D^*\D)^{-\frac12})$. 

An odd resp.\ even regular self-adjoint Fredholm operator $\D$ on $E$ yields a well-defined class $[\D]$ in $\KK^0(\C,A)$ resp.\ $\KK^1(\C,A)$; we refer to \cite[\S2.2]{vdD19_Index_DS} for the construction of this class. 

\begin{prop}[{\cite[Proposition 2.14]{vdD19_Index_DS}}]
\label{prop:Fred_KK}
An odd resp.\ even, regular self-adjoint Fredholm operator $\D$ on a $\Z_2$-graded Hilbert $A$-module $E$ yields a well-defined class $[\D]$ in $\KK^0(\C,A)$ resp.\ $\KK^1(\C,A)$. 
Furthermore, if two such operators $\D$ and $\D'$ are homotopic, then $[\D]=[\D']$. 
\end{prop}

We consider the following standard isomorphisms (see \cite[\S2.1]{Wah07}) of $\KK^\bullet(\C,A)$ with $\K_\bullet(A)$. 
In the even ($\Z_2$-graded) case, consider an odd regular self-adjoint Fredholm operator 
\[
\D = \mattwo{0}{\D_-}{\D_+}{0} 
\]
on the standard $\Z_2$-graded Hilbert module $\hat\mH_A = \mH_A\oplus\mH_A$. The standard isomorphism $\KK^0(\C,A) \to \K_0(A)$ maps the class $[\D]$ to the Fredholm index of $\D_+$. 
In the odd case, the standard isomorphism $\KK^1(\C,A) \to \K_1(\mK_A(\mH_A)) \simeq \K_1(A)$ assigns to the class represented by a bounded Kasparov module $(\mH_A,T)$ the element $\left[ e^{i\pi(T+1)} \right] \in \K_1(\mK_A(\mH_A))$. 

Let $T$ be a regular self-adjoint operator on $E$. 
A densely defined operator $R$ on $E$ is called \emph{relatively $T$-compact} if $\Dom(T) \subset \Dom(R)$ and $R(T\pm i)^{-1}$ is compact. 
For more details on relatively compact operators in the setting of Hilbert $C^*$-modules, we refer the reader to \cite[Appendix \S A.3]{vdD25_Callias}. 

\begin{prop}[{\cite[Proposition A.11]{vdD25_Callias}}]
\label{prop:KK-class_rel_cpt_pert}
Let $T$ be a regular self-adjoint Fredholm operator on $E$, and let $R$ be a symmetric operator on $E$ which is relatively $T$-compact. 
Then: 
\begin{enumerate}
\item 
\label{item:rel_cpt_pert_reg_sa}
$T+R$ is also regular, selfadjoint, and Fredholm, and any parametrix for $T$ is also a parametrix for $T+R$. 
\item 
$[T+R] = [T] \in \KK^p(\C,A)$ (where $p=0$ if $R,T$ are odd, and $p=1$ otherwise). 
\end{enumerate}
\end{prop}

\subsubsection{Clifford symmetries}

We show next that the presence of a `Clifford symmetry' implies that a Fredholm operator is homotopically trivial. 
The following definition is adapted from \cite[Definition 4.12]{DM20}. 

\begin{defn}
Let $\D$ be a regular self-adjoint Fredholm operator on a Hilbert $A$-module $E$. 
Then $\D$ is called \emph{Clifford symmetric} if there exists a self-adjoint unitary $\gamma\in\mL_A(E)$ such that $\gamma \cdot \Dom\D \subset \Dom\D$ and $\gamma\D=-\D\gamma$. 
If $E$ is $\Z_2$-graded and $\D$ is odd, then we require in addition that $\gamma$ is also odd. 
We refer to $\gamma$ as the \emph{Clifford symmetry} of $\D$. 
\end{defn}

\begin{lem}
\label{lem:Fred_Cliff-symm}
If $\D$ is a Clifford symmetric, odd resp.\ even, regular self-adjoint Fredholm operator on a $\Z_2$-graded Hilbert $A$-module $E$, then $[\D] = 0$ in $\KK^0(\C,A)$ resp.\ $\KK^1(\C,A)$. 
\end{lem}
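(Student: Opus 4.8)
The plan is to show that the Clifford symmetry $\gamma$ lets us construct an explicit homotopy from $\D$ to an operator that visibly represents the zero class, and then invoke \cref{prop:Fred_KK}. The basic idea is that $\gamma$ anticommutes with $\D$, so the family $\D_\theta := \cos(\theta)\,\D + \sin(\theta)\,\gamma$ (for $\theta \in [0,\pi/2]$, say) interpolates between $\D_0 = \D$ and $\D_{\pi/2} = \gamma$, and $\gamma$ — being a self-adjoint unitary, hence with spectrum $\{\pm 1\}$ bounded away from $0$ — has a trivial $\KK$-class. First I would check that each $\D_\theta$ is regular self-adjoint with $\Dom(\D_\theta) = \Dom(\D)$: since $\sin(\theta)\gamma$ is a bounded self-adjoint perturbation, this is a standard bounded-perturbation fact (and in the graded, odd case, $\gamma$ odd ensures $\D_\theta$ stays odd). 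Next I would verify Fredholmness of each $\D_\theta$ uniformly: because $\gamma$ anticommutes with $\D$, one computes $\D_\theta^2 = \cos^2(\theta)\D^2 + \sin^2(\theta) + \cos(\theta)\sin(\theta)(\D\gamma + \gamma\D) = \cos^2(\theta)\D^2 + \sin^2(\theta)$, so on $\Dom(\D^2)$ we have $\D_\theta^2 \ge \sin^2(\theta)$, which for $\theta$ near $\pi/2$ gives invertibility outright, and for all $\theta$ shows any parametrix for $\D$ (rescaled) works — more carefully, $\D_\theta^2 + 1 = \cos^2(\theta)(\D^2+1) + \sin^2(\theta) \cdot 2 \ge \cos^2(\theta)(\D^2 + 1)$, and a parametrix for $\D$ gives one for $\D_\theta$ via this comparison, so the whole path is Fredholm.

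The key step is then to package this into a genuine homotopy in the sense of the definition recalled above: I would take $\til E := C([0,\pi/2], E)$ (constant field), and define $\til\D$ to be the operator acting fibrewise as $\D_\theta$. One must confirm $\til\D$ is a regular self-adjoint Fredholm operator on the Hilbert $C([0,\pi/2],A)$-module $\til E$, with the correct evaluations $\ev_\theta(\til E, \til\D) \simeq (E, \D_\theta)$; this follows from the fibrewise estimates above together with the fact that $\theta \mapsto \D_\theta$ varies continuously in a suitable resolvent sense (again because the variation is through the bounded, norm-continuous term $\sin(\theta)\gamma$). By \cref{prop:Fred_KK} this yields $[\D] = [\D_0] = [\D_{\pi/2}] = [\gamma]$ in $\KK^0(\C,A)$ resp.\ $\KK^1(\C,A)$.

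Finally I would show $[\gamma] = 0$. Since $\gamma$ is a self-adjoint unitary, $0$ is not in its spectrum, so $\gamma$ has compact (in fact zero) ``negative spectral subspace obstruction'': concretely, the constant function $\chi \equiv 1$ is already, up to smoothing, a normalising function — or more cleanly, $\gamma(1+\gamma^2)^{-1/2} = \gamma/\sqrt{2}$ is invertible, so the bounded transform of $\gamma$ is invertible and hence represents the trivial class (an invertible self-adjoint operator has $[\cdot] = 0$ in both parities, as it is a degenerate Kasparov module). Thus $[\D] = 0$. The main obstacle I anticipate is the regularity/continuity bookkeeping for $\til\D$ on the module $\til E$ over $C([0,\pi/2],A)$ — verifying it is regular self-adjoint (via surjectivity of $\til\D \pm i$, using that $\D_\theta \pm i$ is surjective fibrewise with norm-continuous inverse) and that it is Fredholm with a parametrix over the whole path; everything else is routine.
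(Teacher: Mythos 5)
Your idea --- rotating $\D$ into $\gamma$ via the anticommutation relation and concluding from the degeneracy of $\gamma$ --- is exactly the idea of the paper's proof, and your final step ($[\gamma]=0$) is fine. The gap is in the middle: you carry out the rotation at the level of the \emph{unbounded} operators $\D_\theta=\cos\theta\,\D+\sin\theta\,\gamma$, and the verifications you sketch for turning this into a homotopy in the sense of the recalled definition do not hold as stated. First, the fibrewise inverses $(\D_\theta\pm i)^{-1}$ are \emph{not} norm-continuous at $\theta=\pi/2$ when $\D$ is unbounded: from $\gamma\D=-\D\gamma$ one gets $(\D_\theta^2+1)^{-1}=(\cos^2\theta\,\D^2+\sin^2\theta+1)^{-1}$, and if $\D$ has unbounded spectrum this converges to $\tfrac12$ only strongly, never in norm, as $\theta\to\pi/2$; hence $(\D_\theta\pm i)^{-1}$ cannot converge in norm either. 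Second, the comparison $\D_\theta^2+1\geq\cos^2\theta\,(\D^2+1)$ does not hand you a parametrix: if $Q$ is a parametrix for $\D$, then $(\cos\theta)^{-1}Q\D_\theta-1=(Q\D-1)+\tan\theta\, Q\gamma$, and $Q\gamma$ is compact only if $Q$ is, which is not assumed. What Fredholmness of $\til\D$ over $C([0,\pi/2],A)$ actually requires is a single parametrix whose remainders are compact endomorphisms of $C([0,\pi/2],E)$, i.e.\ \emph{norm}-continuous families of compacts, and this is precisely where the endpoint causes trouble. (A minor further slip: $\Dom\D_{\pi/2}=E$, not $\Dom\D$.)

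The paper sidesteps all of this by rotating \emph{after} taking a normalising function: since $\chi$ is odd, $\gamma\chi(\D)=-\chi(\D)\gamma$, and $F_t:=\cos(\tfrac{\pi t}{2})\chi(\D)+\sin(\tfrac{\pi t}{2})\gamma$ satisfies $F_t=F_t^*$ and $F_t^2-1=\cos^2(\tfrac{\pi t}{2})\big(\chi(\D)^2-1\big)\in\End_A^0(E)$, so it is a norm-continuous operator homotopy of bounded Kasparov modules ending at the degenerate module $\gamma$. If you want to salvage the unbounded route, note that $\D_\theta^2=\cos^2\theta\,\D^2+\sin^2\theta\geq\sin^2\theta$, so $\D_\theta$ is already invertible for every $\theta\in(0,\pi/2]$; you would then only need to connect $\D$ to $\D+\varepsilon\gamma$ for small $\varepsilon>0$, but since $\varepsilon\gamma$ is a bounded rather than relatively $\D$-compact perturbation, even that step still needs an argument. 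The bounded-transform route is the clean one.
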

\begin{proof}
Since any normalising function $\chi$ is odd, we have (cf.\ \cite[Lemma 1.14]{DM20}) $\gamma \chi(\D) = -\chi(\D) \gamma$. 
Then $F_t := \cos(\tfrac{\pi t}{2}) \chi(\D) + \sin(\tfrac{\pi t}{2}) \gamma$ yields an operator-homotopy from $\chi(\D)$ to $\gamma$, and since $\gamma$ is degenerate we conclude $[\D] = [\chi(\D)] = [\gamma] = 0 \in \KK^\bullet(\C,A)$. 
\end{proof}

The following statement generalises \cite[Lemma 4.6]{vdD19_Index_DS} to $\KK^1$. 
It shows that if $\D$ is `almost Clifford symmetric', then it is equivalent to a Clifford symmetric operator. 

\begin{prop}
\label{prop:Fred_almost-Cliff-symm}
Let $\D$ be an odd resp.\ even, regular self-adjoint Fredholm operator on a $\Z_2$-graded Hilbert $A$-module $E$. 
Suppose there exists a self-adjoint unitary $\gamma\in\mL_A(E)$ such that $\gamma \cdot \Dom\D \subset \Dom\D$ and $\gamma\D+\D\gamma$ is relatively $\D$-compact. 
If $\D$ is odd, then we require in addition that $\gamma$ is also odd. 
Then $[\D] = 0$ in $\KK^0(\C,A)$ resp.\ $\KK^1(\C,A)$. 
\end{prop}
\begin{proof}
Since $\D + \gamma\D\gamma$ is symmetric and relatively $\D$-compact, we know from \cref{prop:KK-class_rel_cpt_pert} that $[\D] = [\D']$, where $\D' := \frac12(\D-\gamma\D\gamma) = \D - \frac12(\D + \gamma\D\gamma)$. Since $\D'$ has the Clifford symmetry $\gamma$, it follows from \cref{lem:Fred_Cliff-symm} that $[\D] = [\D'] = 0$. 
\end{proof}

\subsection{The (odd) relative index}
\label{sec:rel-ind}

Consider two projections $P,Q \in \mL_A(E)$, such that the difference $P-Q$ is a \emph{compact} endomorphism on $E$. 
We will consider a relative index both in the graded and in the ungraded case. 
We briefly recall the definitions and some useful properties; for more details, we refer to \cite[\S3.2 \& \S8.1]{Wah07}. 

We will use $p=0,1$ to distinguish between the ungraded case $p=0$ and the graded case $p=1$. 
We first consider the \emph{ungraded} case $p=0$, as in \cite[\S2.2.1]{vdD25_Callias}, following \cite[\S3.2]{Wah07}. 
Since $P-Q$ is compact, the operator $Q \colon \Ran(P) \to \Ran(Q)$ is a Fredholm operator (with parametrix $P \colon \Ran(Q) \to \Ran(P)$) and thus has a $\K_0(A)$-valued index. 
The index of such a Fredholm operator is defined in \S\ref{sec:Fredholm_KK}. 

Next, in the \emph{graded} case $p=1$, we assume $E = E^+ \oplus E^-$ is $\Z_2$-graded, and we require in addition that $2P-1$ and $2Q-1$ are odd. 
It then follows that $E^+$ and $E^-$ are unitarily isomorphic. 
More precisely, since $2P-1$ is an odd self-adjoint unitary, we can write 
\begin{equation}
\label{eq:proj_odd}
2P-1 = \mattwo{0}{U_P^*}{U_P}{0} \quad \text{or equivalently} \quad P = \frac12 \mattwo{1}{U_P^*}{U_P}{1} ,
\end{equation}
where $U_P\colon E^+ \to E^-$ is unitary. 
Since $P-Q$ is compact, also $U_P-U_Q$ is compact. 
It follows that $1 - U_P U_Q^*$ is compact on $E^-$, and therefore $U_P U_Q^*$ lies in the minimal unitisation of the compact endomorphisms on $E^-$. 

We recall that for any unitary operator $V$ in the minimal unitisation of $\mK_A(E)$, we can use Kasparov stabilisation $E \oplus \mH_A \simeq \mH_A$ to obtain a class $[V] \in K_1(A)$ defined by 
\[
[V] := [V\oplus1] \in K_1(\mK_A(E\oplus\mH_A)) \simeq K_1(\mK_A(\mH_A)) \simeq K_1(A) .
\]

\begin{defn}
\label{defn:rel-ind}
Consider projections $P,Q \in \mL_A(E)$ with $P-Q \in \mK_A(E)$. 
We define the (even or odd) relative index $\relind_p(P,Q) \in \K_p(A)$ as follows. 
In the ungraded case $p=0$, we define the \emph{(even) relative index of $(P,Q)$} by 
\[
\relind_0(P,Q) := \Index \big( Q \colon \Ran(P) \to \Ran(Q) \big) \in \K_0(A) .
\]
In the graded case $p=1$, we additionally require $2P-1$ and $2Q-1$ to be odd, and define the \emph{odd relative index of $(P,Q)$} \cite[\S8.1]{Wah07} by 
\[
\relind_1(P,Q) := \left[ \mattwo{1}{0}{0}{U_P U_Q^*} \right] \in \K_1(A) ,
\]
where $U_P$ and $U_Q$ are obtained from $P$ and $Q$ as in \cref{eq:proj_odd}.
\end{defn}

Here we denote the (even or odd) relative index $\relind_p$ with a subscript $p$, to remind us that it takes values in $\K_p(A)$. 
We record two important properties of the relative index:
\begin{lem}[{\cite[\S3.2 \& \S8.1]{Wah07}}]
\label{lem:rel-ind_properties}
\begin{itemize}
\item (Additivity.) 
Let $P,Q,R \in \mL_A(E)$ be projections with $P-Q$ and $Q-R$ compact. 
(If $p=1$, we assume $2P-1$, $2Q-1$, and $2R-1$ to be odd.) 
Then 
\[
\relind_p(P,R) = \relind_p(P,Q) + \relind_p(Q,R) .
\]
\item (Homotopy invariance.) 
Let $\{P_t\}_{t\in[0,1]}$ and $\{Q_t\}_{t\in[0,1]}$ be strongly continuous paths of projections such that $P_t-Q_t$ is compact for each $t\in[0,1]$. 
(If $p=1$, we assume $2P_t-1$ and $2Q_t-1$ to be odd.) 
Then 
\[
\relind_p(P_0,Q_0) = \relind_p(P_1,Q_1) .
\]
\end{itemize}
\end{lem}

For future convenience we also record the following simple result. 
\begin{lem}
\label{lem:relind_PUP}
Consider a projection $P \in \mL_A(E)$ and a unitary $U \in \mL_A(E)$ with $[P,U] \in \mK_A(E)$. 
Then $\relind_0(P,U^*PU) = \Index(PUP)$. 
\end{lem}
\begin{proof}
We note that the assumption implies that $P-U^*PU$ is compact (so that the relative index is well-defined) and that $PUP$ is Fredholm (with parametrix $PU^*P$) on $\Ran P$. We then compute
\begin{align*}
\relind_0(P,U^*PU) 
&= \Index \big( U^*PUP \colon \Ran(P) \to \Ran(U^*PU) \big) \\
&= \Index \big( PUP \colon \Ran(P) \to \Ran(PU) \big) \\
&= \Index \big( PUP \colon \Ran(P) \to \Ran(P) \big) .
\qedhere 
\end{align*}
\end{proof}

\subsection{The (odd) spectral flow}

We follow the conventions and definitions for the spectral flow as given in \cite[\S2.2]{vdD25_Callias} (see also references therein, in particular \cite[\S3]{Wah07}). 
Here we shall only briefly adapt the definitions to the graded case, following the approach of \cite[\S8]{Wah07}. 

In the graded case $p=1$, we assume $E$ is $\Z_2$-graded, and we consider \emph{odd} operators $\D$ on $E$ or \emph{odd} families of operators $\{\D(x)\}_{x\in X}$ on $E$, defining \emph{odd} operators $\D(\cdot)$ on $C(X,E)$ (where $X$ is a compact Hausdorff space). 
In this case, we require trivialising operators and (locally) trivialising families to be \emph{odd} as well. 
The following lemma shows that the $\Z_2$-grading is compatible with the positive spectral projection. 
\begin{lem}
\label{lem:pos-proj_odd}
Let $\D$ be an odd invertible regular self-adjoint operator on a $\Z_2$-graded Hilbert $A$-module $E$, and let $P_+(\D)$ denote the positive spectral projection of $\D$. 
Then the operator $2P_+(\D)-1$ is also odd. 
\end{lem}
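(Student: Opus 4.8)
The plan is to show directly that $2P_+(\D)-1$ anticommutes with the grading operator. Let $\Gamma$ denote the self-adjoint unitary implementing the $\Z_2$-grading on $E$; saying $\D$ is odd means $\Gamma\D = -\D\Gamma$ on $\Dom\D$, and saying $2P_+(\D)-1$ is odd means $\Gamma(2P_+(\D)-1) = -(2P_+(\D)-1)\Gamma$, i.e.\ $\Gamma P_+(\D) = (1-P_+(\D))\Gamma$. The key observation is that since $\D$ is invertible and self-adjoint, its positive spectral projection can be written via functional calculus as $P_+(\D) = \frac12\big(1 + \mathrm{sgn}(\D)\big)$, where $\mathrm{sgn}(\D) = \D|\D|^{-1} = \D(\D^2)^{-\frac12}$ is an adjointable self-adjoint unitary (invertibility guarantees $0$ is not in the spectrum, so $\mathrm{sgn}$ is a bounded continuous function on the spectrum). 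Thus it suffices to prove $\Gamma \,\mathrm{sgn}(\D) = -\mathrm{sgn}(\D)\,\Gamma$.

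For this, first I would note that $\Gamma$ commutes with $\D^2$: from $\Gamma\D = -\D\Gamma$ we get $\Gamma\D^2 = -\D\Gamma\D = \D^2\Gamma$ on $\Dom\D^2$, hence $\Gamma$ commutes with $(1+\D^2)^{-1}$ and therefore (by continuous functional calculus, or by approximating $(\D^2)^{-\frac12}$ by polynomials in $(1+\D^2)^{-1}$) with $(\D^2)^{-\frac12} = |\D|^{-1}$. Since $|\D|^{-1}$ maps into $\Dom\D$, the computation
\[
\Gamma\,\mathrm{sgn}(\D) = \Gamma\D|\D|^{-1} = -\D\Gamma|\D|^{-1} = -\D|\D|^{-1}\Gamma = -\mathrm{sgn}(\D)\,\Gamma
\]
is then valid on all of $E$. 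This gives $\Gamma P_+(\D)\Gamma = \frac12(1 - \mathrm{sgn}(\D)) = 1 - P_+(\D)$, which rearranges to the claim.

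Alternatively, and perhaps even more cleanly, one can use a normalising function: pick an \emph{odd} continuous function $f\colon\R\to\R$ with $f \equiv 1$ near $[\,\epsilon,\infty)$ and $f \equiv -1$ near $(-\infty,-\epsilon\,]$, where $(-\epsilon,\epsilon)$ misses the spectrum of $\D$; then $f(\D) = \mathrm{sgn}(\D) = 2P_+(\D)-1$ exactly. The oddness of $f$ together with $\Gamma\D = -\D\Gamma$ forces $\Gamma f(\D) = f(-\D)\Gamma = -f(\D)\Gamma$, by the same functional-calculus argument (this is precisely the mechanism cited in the proof of \cref{lem:Fred_Cliff-symm}, applied here to the sign function rather than an arbitrary normalising function). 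I expect the only point requiring a little care to be the justification that $\Gamma$ intertwines the functional calculus correctly for an unbounded operator — that $\Gamma\D = -\D\Gamma$ as unbounded operators propagates to $\Gamma f(\D) = f(-\D)\Gamma$ for bounded continuous $f$ — but this is standard, following from the resolvent identity $\Gamma(\D - \lambda)^{-1} = -(\D+\lambda)^{-1}\Gamma$ and the fact that resolvents generate the continuous functional calculus; no genuine obstacle arises.
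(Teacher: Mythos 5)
Your proposal is correct and follows essentially the same route as the paper: the paper also uses invertibility to obtain a spectral gap, writes $2P_+(\D)-1=\phi(\D)$ for an odd bounded continuous function $\phi$ equal to $\pm1$ outside the gap, and invokes the fact (cf.\ \cite[Lemma 1.14]{DM20}, the same mechanism used in \cref{lem:Fred_Cliff-symm}) that $\phi(\D)$ is odd whenever $\phi$ is odd. Your first variant with $\mathrm{sgn}(\D)=\D|\D|^{-1}$ is just a hands-on instance of this and is also fine.
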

\begin{proof}
Since $\D$ is invertible, there exists $\epsilon>0$ such that $(-\epsilon,\epsilon)$ does not intersect the spectrum of $\D$. 
We can then write $2P_+(\D)-1 = \phi(\D)$ for any odd function $\phi\in C_b(\R)$ with $\phi(r) = -1$ for all $r\leq-\epsilon$ and $\phi(r)=1$ for all $r\geq\epsilon$. 
The statement then follows because $\phi(\D)$ is odd for any odd function $\phi$ (cf.\ \cite[Lemma 1.14]{DM20}). 
\end{proof}

Now let $\D$ be a regular self-adjoint Fredholm operator on $E$, and let $\B_0$ and $\B_1$ be two trivialising operators for $\D$. 
We note that our trivialising operators, defined as in \cite[Definition 2.5]{vdD25_Callias}, are required to be relatively compact but not necessarily bounded (in contrast with \cite[Definition 3.4]{Wah07}). 
We consider the ungraded case ($p=0$) and the graded case ($p=1$) simultaneously (if $p=1$, $E$ is $\Z_2$-graded and $\D,\B_0,\B_1$ are odd). 
By \cite[Corollary A.10]{vdD25_Callias}, the difference of spectral projections $P_+(\D+\B_1) - P_+(\D+\B_0)$ is compact (if $p=1$, the operators $2P_+(\D+\B_j)-1$ are odd by \cref{lem:pos-proj_odd}). 
Hence we can define
\begin{equation}
\label{eq:ind}
\ind_p(\D,\B_0,\B_1) := \relind_p\big( P_+(\D+\B_1) , P_+(\D+\B_0) \big) .
\end{equation}

\begin{defn}[{cf.\ \cite[Definition 3.10]{Wah07} \& \cite[Definition 2.7]{vdD25_Callias}}]
\label{defn:spectral_flow}
Let $p=0$ or $p=1$. 
Let $\D(\cdot) = \{\D(t)\}_{t\in[0,1]}$ be a regular self-adjoint operator on the Hilbert $C([0,1],A)$-module $C([0,1],E)$, for which locally trivialising families exist. (If $p=1$, we require $E$ to be $\Z_2$-graded and $\D$ (as well as all trivialising families) to be odd.) 
Let $0 = t_0 < t_1 < \ldots < t_n = 1$ be such that there is a trivialising family $\{\B^i(t)\}_{t\in[t_i,t_{i+1}]}$ of $\{\D(t)\}_{t\in[t_i,t_{i+1}]}$ for each $i=0,\ldots,n-1$. 
Let $\A_0$ and $\A_1$ be trivialising operators of $\D(0)$ and $\D(1)$. 
Then we define 
\begin{align*}
&\SF_p\big(\{\D(t)\}_{t\in[0,1]} ; \A_0,\A_1 \big) 
:= \\
&\qquad \ind_p\big(\D(0),\A_0,\B^0(0)\big) + \sum_{i=1}^{n-1} \ind_p\big(\D(t_i),\B^{i-1}(t_i),\B^i(t_i)\big) + \ind_p\big(\D(1),\B^{n-1}(1),\A_1\big) \\
&\qquad \in K_p(A) ,
\end{align*}
where $\ind_p$ is defined in \cref{eq:ind}. 

If the endpoints $\D(0)$ and $\D(1)$ are invertible, we define 
\begin{multline*}
\SF_p\big(\{\D(t)\}_{t\in[0,1]} \big) 
:= \SF_p\big(\{\D(t)\}_{t\in[0,1]} ; 0,0 \big) \\
= \ind_p\big(\D(0),0,\B^0(0)\big) + \sum_{i=1}^{n-1} \ind_p\big(\D(t_i),\B^{i-1}(t_i),\B^i(t_i)\big) + \ind_p\big(\D(1),\B^{n-1}(1),0\big) .
\end{multline*}
We call $\SF_0$ the (even) \emph{spectral flow} and $\SF_1$ the \emph{odd spectral flow} of the family $\{\D(t)\}_{t\in[0,1]}$. 
\end{defn}

As in \cite{Wah07}, the definition of the spectral flow is independent of the choice of subdivision and the choice of trivialising families $\{\B^i(t)\}_{t\in[t_i,t_{i+1}]}$. 
In particular, using \cite[Lemma 3.5]{Wah07}, we may choose the trivialising families to be \emph{bounded}, and thus we recover the definition of the (even) spectral flow given in \cite[Definition 3.10]{Wah07} or the odd spectral flow in \cite[\S8.1]{Wah07}.

\subsubsection{Relatively compact perturbations}
\label{sec:sf_rel-cpt}

We can adapt \cite[Proposition 2.8]{vdD25_Callias} to the graded case $p=1$. 
\begin{prop}
\label{prop:sf_rel_cpt_family}
Let $\D(\cdot) = \{\D(t)\}_{t\in[0,1]}$ be a regular self-adjoint operator on the Hilbert $C([0,1],A)$-module $C([0,1],E)$. 
(In the graded case $p=1$, $E$ is $\Z_2$-graded and $\D$ is odd.) 
Assume that
\begin{itemize}
\item 
the endpoints $\D(0)$ and $\D(1)$ are invertible; 
\item 
$\D(t) \colon \Dom\D(0) \to E$ depends norm-continuously on $t$; and
\item 
$\D(t)-\D(0)$ is relatively $\D(0)$-compact for each $t\in[0,1]$. 
\end{itemize}
Then the following statements hold:
\begin{enumerate}
\item 
There exists a trivialising family for $\{\D(t)\}_{t\in[0,1]}$. 
\item 
We have the equality 
\begin{align}
\label{eq:sf_glob}
\SF_p\big(\{\D(t)\}_{t\in[0,1]} \big) 
&= \relind_p\big( P_+(\D(1)) , P_+(\D(0)) \big) .
\end{align}
\end{enumerate}
\end{prop}
\begin{proof}
The statement is proven in \cite[Proposition 2.8]{vdD25_Callias} for the ungraded case $p=0$, but the same proof works also in the graded case $p=1$. 
\end{proof}

As a consequence of the above proposition, we obtain an abstract generalisation of the classical ``desuspension'' result by Booss-Bavnbek and Wojciechowski \cite[Theorem 17.17]{Booss-Bavnbek--Wojciechowski93}. We note that, as a special case, we also obtain a ``family version'' (cf.\ \cite[Theorem 4.4]{DZ98}) by considering $A=C(X)$ for a compact space $X$. 
\begin{coro}
\label{coro:sf_index_PuP}
Let $\D$ be an invertible regular self-adjoint operator on the Hilbert $A$-module $E$. 
Consider a unitary operator $u \in \mL_A(E)$ such that $u\colon\Dom\D\to\Dom\D$ and $[\D,u]$ is relatively $\D$-compact. 
Let $\chi \colon [0,1] \to \R$ be any continuous function satisfying $\chi(0) = 0$ and $\chi(1) = 1$. 
For $t\in[0,1]$ we define $\D(t) := (1-\chi(t)) \D + \chi(t) u^* \D u = \D + \chi(t) u^* [\D,u]$. 
Then we have the equality 
\[
\SF_0\big(\{\D(t)\}_{t\in[0,1]} \big) 
= - \Index \big( P_+(\D) u P_+(\D) \big) 
\in \K_0(A) .
\]
\end{coro}
\begin{proof}
From \cref{prop:sf_rel_cpt_family} we have 
\[
\SF_0\big(\{\D(t)\}_{t\in[0,1]} \big) 
= \relind_0\big( P_+(\D(1)) , P_+(\D(0)) \big) 
= \relind_0\big( P_+(u^*\D u) , P_+(\D) \big) .
\]
Using \cref{lem:relind_PUP} we thus obtain 
\[
\relind_0\big( P_+(u^* \D u) , P_+(\D) \big) 
= \relind_0\big( u^* P_+(\D) u , P_+(\D) \big)
= -\Index\big( P_+(\D) u P_+(\D) \big) .
\qedhere 
\]
\end{proof}

\subsubsection{Bott periodicity}

The following result shows that the (even or odd) spectral flow is closely related to the Bott periodicity isomorphism $\beta\colon\K_{p+1}(C_0(\R,A)) \to \K_p(A)$. 
Let $\D(\cdot) = \{\D(t)\}_{t\in[0,1]}$ be a regular self-adjoint Fredholm operator on the Hilbert $C([0,1],A)$-module $C([0,1],E)$, with $\D(0)$ and $\D(1)$ invertible. 
(In the graded case $p=1$, $E$ is $\Z_2$-graded and each $\D(t)$ is odd.) 
We extend the family to $\R$ by setting $\D(t) := \D(0)$ for all $t<0$ and $\D(t) := \D(1)$ for all $t>1$, and we view $\D(\cdot)$ as a regular self-adjoint Fredholm operator on the Hilbert $C_0(\R,A)$-module $C_0(\R,E)$, defining a class $[\D(\cdot)] \in \KK^{p+1}(\C,C_0(\R,A))$. 
We note that the Bott periodicity isomorphism is implemented by taking the Kasparov product with the class $[-i\partial_t] \in \KK^1(C_0(\R),\C)$: 
\[
\beta\big([\D(\cdot)]\big) = [\D(\cdot)] \otimes_{C_0(\R)} [-i\partial_t] \in K_p(A) .
\]
The following result shows that we can view the (even or odd) spectral flow $\SF_p$ as implementing the Bott periodicity isomorphism $\KK^{p+1}(\C,C_0(\R,A)) \to \KK^p(\C,A) \simeq \K_p(A)$ (whenever it is defined). 
\begin{prop}[\cite{Wah07}]
\label{prop:SF_Kasp_prod}
If there exist locally trivialising families for $\{\D(t)\}_{t\in\R}$, then
\[
\SF_p\big(\{\D(t)\}_{t\in[0,1]}\big) = \beta\big([\D(\cdot)]\big) = [\D(\cdot)] \otimes_{C_0(\R)} [-i\partial_t] \in K_p(A) .
\]
\end{prop}
\begin{proof}
We obtain the equality $\beta\big([\D(\cdot)]\big) = \SF_p\big(\{\D(x)\}_{x\in\R}\big)$ for $p=0$ from \cite[Proposition 4.2 \& Theorem 4.4]{Wah07} and for $p=1$ from \cite[Theorem 8.6]{Wah07}. 
\end{proof}

\section{Computation of `higher' index pairings}
\label{sec:index_pairings}

We will consider the pairing (via the Kasparov product) of $\K$-theory with $\K$-homology. 
The two promiment and well-known cases are the \emph{even index pairing} (pairing even $\K$-theory with even $\K$-homology) and the \emph{odd index pairing} (pairing odd $\K$-theory with odd $\K$-homology), see e.g.\ \cite[Ch.\ IV, Proposition 2]{Connes94}. 
Given an even Fredholm module $(B,\mH,F)$ and a projection $p\in M_n(B)$, 
the even index pairing of the $\K$-theory class $[p] \in \K_0(B)$ with the $\K$-homology class $[F] \in \K^0(B)$ is given by 
\[
[p] \otimes_B [F] \; \xmapsto{\simeq} \; \Index( p F_+ p ) ,
\]
where $F_+$ is one of the $\Z_2$-graded components of $F$. 
Given an odd Fredholm module $(B,\mH,F)$ and a unitary $u\in M_n(B)$, 
the odd index pairing of the $\K$-theory class $[u] \in \K_1(B)$ with the $\K$-homology class $[F] \in \K^1(B)$ is given by 
\[
[u] \otimes_B [F] \; \xmapsto{\simeq} \; \Index( P_+ u P_+ ) , 
\]
where $P_+ \equiv P_+(F)$ denotes the positive spectral projection of $F$. 

Here we will reprove these results in a slightly more general setting: we consider $\K$-theory `with coefficients' in an auxiliary $C^*$-algebra $A$. 
Thus we consider an element in the $\K$-theory group $\K_*(A\otimes B)$ represented by a projection $p \in M_n(A\otimes B)$ or a unitary $u \in M_n(A\otimes B)$, yielding `higher' index pairings taking values in $\K_*(A)$. 
Moreover, we will also consider the pairings of odd $\K$-theory with even $\K$-homology and of even $\K$-theory with odd $\K$-homology. 
These pairings are normally ignored in the literature, for the simple reason that (for $A=\C$) the odd $\K$-theory group $\K_1(\C) = \{0\}$ is trivial. 
However, for a general $C^*$-algebra $A$, the group $\K_1(A)$ can be nontrivial, and then the nontrivial pairings $\K_p(A\otimes B) \times \K^{p+1}(B) \to \K_1(A)$ can be of interest as well. 

Finally, we will consider here the unbounded picture of $\K$-homology. Thus, an element of the $\K$-homology group $\K^p(B)$ is represented by an even (if $p=0$) or odd (if $p=1$) \emph{spectral triple} $(\B,\mH,\D)$. 
We will denote the representation of $B$ on $\mH$ by $\rho \colon B \to \B(\mH)$.

\subsection{The even index pairing}
\label{sec:even_index_pairing}

Given a projection $p\in M_n(A\otimes B)$, we recall that its $\K$-theory class $[p] \in \K_0(A\otimes B)$ corresponds (under the standard isomorphism) to the $\KK$-class of the (unbounded) Kasparov $\C$-$A\otimes B$ module $(\C,p\cdot (A\otimes B)^{\oplus n},0)$ in $\KK^0(\C,A\otimes B)$. 
Given an even spectral triple $(\B,\mH,\D)$ representing a class $[\D] \in \KK^0(B,\C)$, there is a $\Z_2$-grading $\mH = \mH_+ \oplus \mH_-$, with respect to which we can write 
\begin{align*}
\D &= \mattwo{0}{\D_-}{\D_+}{0} , & 
\rho(b) &= \mattwo{\rho_+(b)}{0}{0}{\rho_-(b)} , \quad\forall b\in B .
\end{align*}
The projection $p \in M_n(A\otimes B)$ yields a projection $\rho(p) \in A\otimes\B(\mH^{\oplus n})$ by extending the representation $\rho = \rho_+ \oplus \rho_-$ from $\mH$ to $\mH^{\oplus n}$. Similarly, we also obtain projections $p_+ \equiv \rho_+(p) \in A\otimes\B(\mH_+^{\oplus n})$ and $p_- \equiv \rho_-(p) \in A\otimes\B(\mH_-^{\oplus n})$. 

In addition to the even index pairing $\K_0(A\otimes B) \times \KK^0(B,\C) \to \K_0(A)$, we will simultaneously also consider the even-odd pairing $\K_0(A\otimes B) \times \KK^1(B,\C) \to \K_1(A)$. 
\begin{thm}
\label{thm:even_index_pairing_unbdd}
Let $A$ and $B$ be trivially graded unital $C^*$-algebras. 
Consider an (even or odd) spectral triple $(\B,\mH,\D)$ over $B$, representing a class $[\D] \in \KK^p(B,\C)$. 
Furthermore, consider a projection $p \in M_n(A\otimes B)$, such that $\rho(p)$ preserves the domain of $\D$, and $[\D,\rho(p)]$ is bounded. 
Then the pairing $[p] \otimes_B [\D] \in \KK^p(\C,A)$ is given by 
\[
[p] \otimes_B [\D] = \big[ \rho(p) \D \rho(p) \big] \in \KK^p(\C,A) .
\]
If $p=0$, the pairing is given under the standard isomorphism $\KK^0(\C,A) \xrightarrow{\simeq} \K_0(A)$ by 
\[
[p] \otimes_B [\D] \; \xmapsto{\simeq} \; \Index\big( p_- \D_+ p_+ \big) \in \K_0(A) .
\]
\end{thm}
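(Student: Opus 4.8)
The plan is to compute the internal Kasparov product $[p]\otimes_B[\D]$ in the unbounded picture and then read off the resulting class in $\K_0(A)$ under the standard isomorphism of \S\ref{sec:Fredholm_KK}.

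First I recall the representatives. As noted just before the statement, the class $[p]\in\KK^0(\C,A\otimes B)$ is represented by the unbounded Kasparov module $\big(\C,\ p\cdot(A\otimes B)^{\oplus N},\ 0\big)$. Since $(\B,\mH,\D)$ is a spectral triple, $\D$ has compact resolvent, so by the remark following \cref{prop:Fred_KK} its bounded transform is a normalising function and $(\B,\mH,\D)$ represents $[\D]\in\KK^0(B,\C)$ as an unbounded Kasparov module. The product is then taken over $B$, with $B$ acting on $\mH$ via $\pi$ and on the right of $(A\otimes B)^{\oplus N}$ in the obvious way, and lands in $\KK^0(\C,A)$ (after the usual external product with $\mathrm{id}_A$).

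Second I build the unbounded product. The product module is the interior tensor product $\big(p\cdot(A\otimes B)^{\oplus N}\big)\otimes_B\mH$; using non-degeneracy of $\pi$ together with the grading $\mH=\mH_+\oplus\mH_-$, it is unitarily isomorphic to the Hilbert $A$-module $\pi(p)\cdot(A\otimes\mH)^{\oplus N}=p_+(A\otimes\mH_+)^{\oplus N}\oplus p_-(A\otimes\mH_-)^{\oplus N}$. Because the operator of the left module is $0$, a product operator is obtained by compressing $1\otimes\D$ with the Grassmann connection of the finitely generated projective module $p\cdot(A\otimes B)^{\oplus N}$; concretely, this is the operator $S:=\pi(p)\,(1\otimes\D)\,\pi(p)$ on the range of $\pi(p)$. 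The hypotheses that $p$ preserves $\Dom\D$ and that $[\D,\pi(p)]$ is bounded ensure that $S$ is regular and self-adjoint with the expected domain, and --- since $\D$ has compact resolvent on $\mH$, so that $1\otimes\D$ has compact resolvent on the Hilbert $A$-module $A\otimes\mH$ --- that $S$ again has compact resolvent. Thus $\big(\C,\ \pi(p)(A\otimes\mH)^{\oplus N},\ S\big)$ is an even unbounded Kasparov $\C$-$A$ module, and that it represents $[p]\otimes_B[\D]$ follows from the standard criteria for the unbounded Kasparov product: the connection condition is built into the construction, the domain condition follows from the hypotheses above, and the positivity (semiboundedness) condition is trivial because the left operator vanishes.

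Third I identify the class. With respect to the grading, $S$ is odd, with upper-right corner $p_+(1\otimes\D_-)p_-$ and lower-left corner $p_-(1\otimes\D_+)p_+$. Hence, under the standard isomorphism $\KK^0(\C,A)\xrightarrow{\ \simeq\ }\K_0(A)$ recalled in \S\ref{sec:Fredholm_KK} --- which sends the class of an odd regular self-adjoint Fredholm operator to the Fredholm index of its lower-left component --- the product $[p]\otimes_B[\D]$ is sent to $\Index\big(p_-\D_+p_+\big)$, where as in the statement we suppress the amplification to $M_N$ and the factor $1\otimes(-)$. This is precisely the claimed formula. The main obstacle is the verification in the second step that the compression $S=\pi(p)(1\otimes\D)\pi(p)$ is again regular self-adjoint with compact resolvent on $\pi(p)(A\otimes\mH)^{\oplus N}$: self-adjointness needs a short perturbation argument exploiting the boundedness of $[\D,\pi(p)]$ (writing the compression on $\Ran\pi(p)$ as $\pi(p)(1\otimes\D)+\pi(p)[1\otimes\D,\pi(p)]$ and applying a standard relative-boundedness criterion), and the compact-resolvent property follows from the fact that compressing an operator with compact resolvent by a projection which preserves its domain and has bounded commutator with it preserves compactness of the resolvent. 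The rest is bookkeeping with the standard isomorphisms and the definition of the Kasparov product for finitely generated projective modules.
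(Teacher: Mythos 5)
Your proposal is correct and follows essentially the same route as the paper: identify the product module with $\pi(p)\cdot(A\otimes\mH)^{\oplus N}$ via non-degeneracy, observe that since the left operator is zero the product is represented by the compression $\pi(p)\D\pi(p)$ (a $\D$-connection), and read off $\Index(p_-\D_+p_+)$ from the lower-left corner under the standard isomorphism. The only difference is presentational: the paper argues at the level of ``any $\D$-connection represents the product,'' while you verify the unbounded-product criteria and the regularity/compact-resolvent properties of the compression explicitly, which is a harmless elaboration.
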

\begin{proof}
We have the isomorphism 
\[
(p\cdot (A\otimes B)^{\oplus n}) \otimes_B \mH 
\simeq \rho(p) \cdot (A\otimes \mH)^{\oplus n} . 
\]
Consider first the case of an \emph{even} spectral triple ($p=0$). 
We will compute the Kasparov product using \cite[Theorem 7.4]{LM19}. 
We consider $T := \rho(p) \D \rho(p)$ on $\rho(p) \cdot (A\otimes \mH)^{\oplus n}$ (where we simply write $\D$ for the diagonal operator on $(A\otimes \mH)^{\oplus n}$), which satisfies condition (i) of \cite[Theorem 7.4]{LM19}. 
Since the Kasparov module $(\C,p\cdot (A\otimes B)^{\oplus n},0)$ representing the $\K$-theory class $[p]$ trivially satisfies conditions (ii) and (iii) of \cite[Theorem 7.4]{LM19}, 
it follows that the unbounded Kasparov module $\big(\C,\rho(p) \cdot (A\otimes \mH)^{\oplus n},\rho(p) \D \rho(p)\big)$ represents the Kasparov product $[p] \otimes_B [\D]$. 
With respect to the $\Z_2$-grading $\mH = \mH_+ \oplus \mH_-$ we write 
\begin{align*}
\rho(p) \D \rho(p) &= \mattwo{p_+}{0}{0}{p_-} \mattwo{0}{\D_-}{\D_+}{0} \mattwo{p_+}{0}{0}{p_-} = \mattwo{0}{p_+\D_-p_-}{p_-\D_+p_+}{0} .
\end{align*}
Finally, under the standard isomorphism, the $\KK$-class $[\rho(p)\D\rho(p)] \in \KK^0(\C,A)$ corresponds to $\Index(p_-\D_+p_+) \in \K_0(A)$. 

For the case of an \emph{odd} spectral triple ($p=1$), using the description of the even-odd Kasparov product from \cite[Example 2.37]{BMS16}, the pairing $[p] \otimes_B [\D]$ is now represented by $\Gamma \rho(p) \D \rho(p)$, where $\Gamma$ denotes the $\Z_2$-grading of the Kasparov module $(\C,p\cdot (A\otimes B)^{\oplus n},0)$. But the latter is trivially graded, so $\Gamma=\Id$. 
\end{proof}

\begin{remark}[The nonunital case]
\label{remark:nonunital_index_pairing}
While we focus on the case of unital $C^*$-algebras, we also briefly discuss here the nonunital case (a more detailed discussion can be found in e.g.\ \cite[\S2.3]{CGRS14}). 
So suppose that we consider a spectral triple $(\B,\mH,\D)$ over a \emph{nonunital} $C^*$-algebra $B$. 
We will assume that $\D$ is \emph{invertible} (in general, we may replace $\D$ by an invertible operator using a doubling trick due to Connes \cite[\S{}I.6]{Con85}). 
In this case, $\rho(p) \D \rho(p)$ is Fredholm with parametrix $\rho(p) \D^{-1} \rho(p)$. 
We wish to compute the pairing with an element $[p]-[q] \in \K_0(A\otimes B)$, where $p,q \in M_n(A\otimes B^+)$ are projections such that $p-q \in M_n(A\otimes B)$. 
In the case of an \emph{even} spectral triple, we obtain an additional summand $\rho(q) \D \rho(q)$ equipped with the \emph{opposite} $\Z_2$-grading, and the index pairing is now given by 
\begin{align*}
\big( [p] - [q] \big) \otimes_B [\D] &= 
\big[ \rho(p) \D \rho(p) \oplus \rho(q) \D \rho(q) \big] \\
&= \Index\big( p_- \D_+ p_+ \big) + \Index\big( q_+ \D_- q_- \big) \in \K_0(A) .
\end{align*}
In the case of an \emph{odd} spectral triple, the additional summand $\rho(q) \D \rho(q)$ has a \emph{minus sign} (coming from the $\Z_2$-grading of the $\K$-theory class), and the even-odd pairing is given by 
\begin{align*}
\big( [p] - [q] \big) \otimes_B [\D] &= \big[ \rho(p) \D \rho(p) \oplus \big({- \rho(q)} \D \rho(q)\big) \big] \\
&= \big[ \rho(p) \D \rho(p) \big] - \big[ \rho(q) \D \rho(q) \big] 
\in \KK^1(\C,A) .
\end{align*}
\end{remark}

\subsection{The odd index pairing}
\label{sec:odd_index_pairing}

Consider a unitary $u\in M_n(A\otimes B)$ and its odd $\K$-theory class $[u] \in \K_1(A\otimes B)$. 
Given an even or odd spectral triple $(\B,\mH,\D)$ representing a class $[\D] \in \KK^p(B,\C)$, our aim is to compute the pairing $[u] \otimes_B [\D] \in \K_{p+1}(B)$. We follow similar arguments as in the proof of \cite[Proposition A.6]{Wah09}, which describes a similar pairing $\K_1(B) \times \KK_*(B,A) \to \K_{*+1}(A)$. The first step consists of replacing the unitary $u$ by a projection $P_u$. 
For this purpose, we consider smooth functions $\chi_\pm \colon \R \to [0,1]$ with the following properties:
\begin{itemize}
\item $\supp \chi_+ \subset [\frac12,\infty)$ and $\supp \chi_- \subset (-\infty,\frac12]$, and 
\item $\chi_+(t) = 1$ for $t\geq1$ and $\chi_-(t) = 1$ for $t\leq0$.
\end{itemize}
We define a third function $\chi_0 \in C_c^\infty(\R)$ by $\chi_0 := \sqrt{1-\chi_+^2-\chi_-^2}$. 

\begin{lem}
\label{lem:proj_P_u}
Let $A$ be a (trivially graded) unital $C^*$-algebra, and let $u\in M_n(A)$ be unitary. Define two projections in $M_{2n}\big(C_0(\R,A)^+\big)$ given by 
\[
P_u := \mattwo{\chi_-^2+\chi_+^2}{\chi_0(\chi_-+\chi_+ u)}{\chi_0(\chi_-+\chi_+ u^*)}{\chi_0^2} ,
\qquad 
p_n := \mattwo{1}{0}{0}{0} .
\]
Then we have the equality 
\[
[u] = \big( [P_u]-[p_n] \big) \otimes_{C_0(\R)} [-i\partial_t] \in \K_1(A) .
\]
\end{lem}
\begin{proof}
Consider for $t\in\R$ the unitary 
\[
w(t) := \mattwo{\chi_-(t)+\chi_+(t) u}{-\chi_0(t)}{\chi_0(t)}{\chi_-(t)+\chi_+(t) u^*} \in M_{2n}(A) ,
\]
and note that 
\[
w(-t) = \mattwo{1}{0}{0}{1} 
\quad\text{and}\quad 
w(t) = \mattwo{u}{0}{0}{u^*} 
\quad\text{for all } t\geq1 .
\]
An explicit computation shows that 
\[
P_u(t) = w(t) p_n w(t)^* . 
\]
Thus, by \cite[Theorem 8.2.2]{Blackadar98}, the image of $[u]$ under the natural isomorphism $\K_1(A) \to \K_0(C_0(\R,A))$ is given by 
\[
[P_u] - [p_n] \in \K_0(C_0(\R,A)) .
\]
The inverse isomorphism $\K_0(C_0(\R,A)) \to \K_1(A)$ can be implemented by taking the interior Kasparov product (over $C_0(\R)$) with $[-i\partial_t]$. 
For the sake of completeness, we compute this Kasparov product explicitly. 
We represent $[P_u]-[p_n]$ by the Kasparov $\C$-$C_0(\R,A)$-module 
\[
\left( \C , P_u A^{\oplus 2n} \oplus p_n A^{\oplus 2n} , \mattwo{0}{P_u}{p_n}{0} \right) .
\]
Under the unitary isomorphism $w^* \colon P_u A^{\oplus 2n} \xrightarrow{\simeq} p_n A^{\oplus 2n} = A^{\oplus n}$, the above Kasparov module is unitarily equivalent to $(\C , A^{\oplus n} \oplus A^{\oplus n} , T_u)$, where 
\[
T_u = \mattwo{0}{p_n w^* p_n}{p_n w p_n}{0} = \mattwo{0}{\chi_- + \chi_+ u^*}{\chi_- + \chi_+ u}{0} .
\]
From \cref{prop:SF_Kasp_prod,prop:sf_rel_cpt_family} we obtain 
\begin{align*}
\big( [P_u]-[p_n] \big) \otimes_{C_0(\R)} [-i\partial_t]
&= [T_u] \otimes_{C_0(\R)} [-i\partial_t] 
= \SF_1\big( \{T_u(t)\}_{t\in[0,1]} \big) \\
&= \relind_1 \big( P_+(T_u(1)) , P_+(T_u(0)) \big) \\
&= \relind_1 \left( \frac12 \mattwo{1}{u^*}{u}{1} , \frac12 \mattwo{1}{1}{1}{1} \right) 
= [u] .
\qedhere
\end{align*}
\end{proof}

Now consider a unitary $u\in M_n(A\otimes B)$ and let $(\B,\mH,\D)$ be an even or odd spectral triple. 
In addition to the odd index pairing $\K_1(A\otimes B) \times \KK^1(B,\C) \to \K_0(A)$, we will simultaneously also consider the odd-even pairing $\K_1(A\otimes B) \times \KK^0(B,\C) \to \K_1(A)$. 
It will be convenient to take $\D$ to be invertible; we will show afterwards that, at least in the odd case, this invertibility assumption can be removed. 
We recall that the representation $\rho\colon B\to\B(\mH)$ is \emph{nondegenerate} if $\Span\{ \rho(b) h \mid b\in B, h\in\mH \}$ is dense in $\mH$. 

\begin{thm}
\label{thm:odd_index_pairing}
Let $A$ and $B$ be trivially graded unital $C^*$-algebras. 
Consider an (even or odd) spectral triple $(\B,\mH,\D)$ over $B$, representing a class $[\D] \in \KK^p(B,\C)$. 
We assume that the representation $\rho \colon B \to \B(\mH)$ is nondegenerate, and that $\D$ is invertible. 
Furthermore, consider a unitary $u \in M_n(A\otimes B)$, such that $\rho(u)$ preserves the domain of $\D$, and $[\D,\rho(u)]$ is bounded. 
Then the pairing $[u] \otimes_B [\D] \in \KK^{p+1}(\C,A)$ is given by 
\[
[u] \otimes_B [\D] = \relind_{p+1}\big( P_+(\D) , \rho(u^*) P_+(\D) \rho(u) \big) \in \KK^{p+1}(\C,A) .
\]
If $p=1$, the pairing is given under the standard isomorphism $\KK^0(\C,A) \xrightarrow{\simeq} \K_0(A)$ by 
\[
[u] \otimes_B [\D] = \Index\big( P_+(\D) \rho(u) P_+(\D) \big) \in \K_0(A) .
\]
\end{thm}
\begin{proof}
Applying \cref{lem:proj_P_u} (with the $C^*$-algebra $A\otimes B$) and using associativity of the Kasparov product, we compute 
\begin{align*}
[u] \otimes_B [\D] 
&= \Big( \big( [P_u]-[p_n] \big) \otimes_{C_0(\R)} [-i\partial_t] \Big) \otimes_B [\D] \\
&= \big( [P_u]-[p_n] \big) \otimes_{C_0(\R,B)} \big( [-i\partial_t] \otimes [\D] \big) \\
&= - \big( [P_u]-[p_n] \big) \otimes_{C_0(\R,B)} \big( [\D] \otimes [-i\partial_t] \big) \\
&= - \big( [P_u]-[p_n] \big) \otimes_{C_0(\R,B)} \tau_{C_0(\R)}([\D]) \otimes_{C_0(\R)} [-i\partial_t] .
\end{align*}
where in the third step we used that the exterior Kasparov product of odd Kasparov modules is anticommutative. 
Computing the nonunital pairing described in \cref{remark:nonunital_index_pairing}, 
we obtain (suppressing the representation $\rho$ from our notation) 
\[
\big( [P_u]-[p_n] \big) \otimes_{C_0(\R,B)} \tau_{C_0(\R)}([\D]) 
= \left[ P_u \D P_u \right] ,
\]
where the summand $p_n \D p_n$ disappears because this operator is invertible. 
Recalling that $P_u$ is defined via conjugation with $w(t)$, we first explicitly compute the upper left entry of 
\[
w(t)^* \D w(t) = \mattwo{(\chi_-^2+\chi_0^2)\D + \chi_+^2 u^* \D u}{\ldots}{\ldots}{\ldots} .
\]
Since the $\K$-theory class of a Fredholm operator is stable under unitary transformations, we obtain 
\begin{align}
\label{eq:P_u-D-P_u}
\left[ P_u \D P_u \right] 
&= \left[ p_n w^* \D w p_n \right] 
= \left[ (\chi_-^2+\chi_0^2)\D + \chi_+^2 u^* \D u \right] 
= \left[ \D(\cdot) \right] ,
\end{align}
where we introduce the family $\{\D(t)\}_{t\in\R}$ given by 
\[
\D(t) := (1-\chi_+(t)^2)\D + \chi_+(t)^2 u^* \D u = \D + \chi_+(t)^2 u^* [\D,u] .
\]
We note that $\D(t) = \D$ for all $t\leq0$ and $\D(t) = u^* \D u$ for all $t\geq1$. 
The Kasparov product $[\D(\cdot)] \otimes_{C_0(\R)} [-i\partial_t]$ can be identified with the (even or odd) spectral flow of $\{\D(t)\}$ by \cref{prop:SF_Kasp_prod}: 
\[
[\D(\cdot)] \otimes_{C_0(\R)} [-i\partial_t] 
= \SF_{p+1}\big( \{\D(t)\}_{t\in\R} \big) .
\]
Using \cref{prop:sf_rel_cpt_family} (note that $u^*[\D,u]$ is bounded and therefore relatively $\D$-compact), this spectral flow is given by the relative index of the positive spectral projections of $\D(0)$ and $\D(1)$:
\begin{align*}
\SF_{p+1}\big(\{\D(t)\}_{t\in[0,1]} \big) 
&= \relind_{p+1}\big( P_+(\D(1)) , P_+(\D(0)) \big) \\
&= - \relind_{p+1}\big( P_+(\D) , u^* P_+(\D) u \big) .
\end{align*}
To summarise, we have proven the sequence of equalities 
\begin{align*}
[u] \otimes_B [\D] 
&= - \big( [P_u]-[p_n] \big) \otimes_{C_0(\R,B)} \tau_{C_0(\R)}([\D]) \otimes_{C_0(\R)} [-i\partial_t] \\
&= - \left[ P_u \D P_u \right] \otimes_{C_0(\R)} [-i\partial_t] 
= - [\D(\cdot)] \otimes_{C_0(\R)} [-i\partial_t] \\
&= - \SF_{p+1}\big( \{\D(t)\}_{t\in[0,1]} \big) 
= \relind_{p+1}\big( P_+(\D) , u^* P_+(\D) u \big) . 
\end{align*}
In the special case $p=1$, we use \cref{lem:relind_PUP} to rewrite the (even) relative index as an ordinary Fredholm index:
\[
\relind_0 \big( P_+(\D) , u^* P_+(\D) u \big) 
= \Index \big( P_+(\D) u P_+(\D) \big) . 
\qedhere
\]
\end{proof}

\begin{coro}
\label{coro:odd_index_pairing}
Let $A$ and $B$ be trivially graded unital $C^*$-algebras. 
Consider an \emph{odd} spectral triple $(\B,\mH,\D)$ over $B$, representing a class $[\D] \in \KK^1(B,\C)$. 
We assume that the representation $\rho \colon B \to \B(\mH)$ is nondegenerate. 
Furthermore, consider a unitary $u \in M_n(A\otimes B)$, such that $\rho(u)$ preserves the domain of $\D$, and $[\D,\rho(u)]$ is bounded. 
Then the pairing $[u] \otimes_B [\D] \in \KK^0(\C,A)$ is given under the standard isomorphism $\KK^0(\C,A) \xrightarrow{\simeq} \K_0(A)$ by 
\[
[u] \otimes_B [\D] = \Index\big( P_+(\D) \rho(u) P_+(\D) \big) \in \K_0(A) .
\]
\end{coro}
\begin{proof}
If $\D$ is invertible, the statement is given by \cref{thm:odd_index_pairing}. 
In general, we know that $\D$ has compact resolvents (since the $C^*$-algebra $B$ is unital and the representation $\rho$ is nondegenerate), and therefore the spectrum of $\D$ is a discrete subset of $\R$. Thus there exists $\mu\in\R$ such that $\D-\mu$ is invertible and $[\D-\mu] = [\D]$. 
We note that $P_+(\D) \rho(u) P_+(\D)$ acts on $\Ran P_+(\D) \subset A \otimes \mH^{\oplus n}$, and that its index equals the index of $P_+(\D) \rho(u) P_+(\D) + 1-P_+(\D)$ on $A \otimes \mH^{\oplus N}$. 
Furthermore, $P_+(\D-\mu) - P_+(\D)$ is compact by \cite[Corollary 3.5]{Les05}, and since the index is stable under compact perturbations, we obtain 
\begin{multline*}
\Index\big( P_+(\D) \rho(u) P_+(\D) + 1-P_+(\D) \big) \\
= \Index\big( P_+(\D-\mu) \rho(u) P_+(\D-\mu) + 1-P_+(\D-\mu) \big) . 
\end{multline*}
Thus the desired equality for $\D$ follows from the equality for the invertible $\D-\mu$. 
\end{proof}

% \bibliographystyle{myamsalpha}
% \bibliography{short,bibliography}

\begin{thebibliography}{CGRS14}

\bibitem[Ang90]{Ang90}
N.~Anghel, \doilinktitle{\emph{{$L^2$}-index formulae for perturbed {D}irac
  operators}}{10.1007/BF02097046}, \doilinkjournal{Commun. Math.
  Phys.}{10.1007/BF02097046} \doilinkvynp{ \textbf{128} (1990), no.~1,
  77--97}{10.1007/BF02097046}.

\bibitem[Ang93]{Ang93a}
\bysame, \doilinktitle{\emph{On the index of {C}allias-type
  operators}}{10.1007/BF01896237}, \doilinkjournal{Geom. Funct.
  Anal.}{10.1007/BF01896237} \doilinkvynp{ \textbf{3} (1993), no.~5,
  431--438}{10.1007/BF01896237}.

\bibitem[BE21]{BE21}
P.~F. Baum and E.~van Erp, \doilinktitle{\emph{The index theorem for {T}oeplitz
  operators as a corollary of {B}ott periodicity}}{10.1093/qmath/haab008},
  \doilinkjournal{Q. J. Math.}{10.1093/qmath/haab008} \doilinkvynp{ \textbf{72}
  (2021), no.~1-2, 547--569}{10.1093/qmath/haab008}.

\bibitem[Bla98]{Blackadar98}
B.~Blackadar, \emph{K-theory for operator algebras}, 2nd ed., Math. Sci. Res.
  Inst. Publ., Cambridge University Press, 1998.

\bibitem[BM92]{BM92}
J.~Br{\"u}ning and H.~Moscovici, \doilinktitle{\emph{{$L^2$}-index for certain
  {D}irac-{S}chr{\"o}dinger operators}}{10.1215/S0012-7094-92-06609-9},
  \doilinkjournal{Duke Math. J.}{10.1215/S0012-7094-92-06609-9} \doilinkvynp{
  \textbf{66} (1992), no.~2, 311--336}{10.1215/S0012-7094-92-06609-9}.

\bibitem[BMS16]{BMS16}
S.~{Brain}, B.~{Mesland}, and W.~D. van Suijlekom, \doilinktitle{\emph{Gauge
  theory for spectral triples and the unbounded {K}asparov
  product}}{10.4171/JNCG/230}, \doilinkjournal{J. Noncommut.
  Geom.}{10.4171/JNCG/230} \doilinkvynp{ \textbf{10} (2016),
  135--206}{10.4171/JNCG/230}.

\bibitem[Bra19]{Bra19}
M.~Braverman, \doilinktitle{\emph{Spectral flows of {T}oeplitz operators and
  bulk-edge correspondence}}{10.1007/s11005-019-01187-7}, \doilinkjournal{Lett.
  Math. Phys.}{10.1007/s11005-019-01187-7} \doilinkvynp{ \textbf{109} (2019),
  no.~10, 2271--2289}{10.1007/s11005-019-01187-7}.

\bibitem[Bun95]{Bun95}
U.~Bunke, \doilinktitle{\emph{A {K}-theoretic relative index theorem and
  {C}allias-type {D}irac operators}}{10.1007/BF01460989}, \doilinkjournal{Math.
  Ann.}{10.1007/BF01460989} \doilinkvynp{ \textbf{303} (1995), no.~1,
  241--279}{10.1007/BF01460989}.

\bibitem[Bun00]{Bun00}
\bysame, \doilinktitle{\emph{On the index of equivariant {T}oeplitz
  operators}}{10.1142/4523}, \doilinkbooktitle{Lie Theory and its Applications
  in Physics {III}}{10.1142/4523} (H.-D. Doebner, V.~K. Dobrev, and J.~Hilgert,
  eds.), World Scientific, 2000, pp.~176--184.

\bibitem[BW93]{Booss-Bavnbek--Wojciechowski93}
B.~{Booss-Bavnbek} and K.~P. Wojciechowski, \emph{{Elliptic boundary problems
  for Dirac operators}}, Mathematics Theory \& Applications, Birkh{\"a}user,
  Boston, 1993.

\bibitem[Cal78]{Cal78}
C.~Callias, \doilinktitle{\emph{Axial anomalies and index theorems on open
  spaces}}{10.1007/BF01202525}, \doilinkjournal{Commun. Math.
  Phys.}{10.1007/BF01202525} \doilinkvynp{ \textbf{62} (1978), no.~3,
  213--234}{10.1007/BF01202525}.

\bibitem[CGRS14]{CGRS14}
A.~{Carey}, V.~{Gayral}, A.~{Rennie}, and F.~{Sukochev},
  \doilinktitle{\emph{{Index theory for locally compact noncommutative
  geometries}}}{10.1090/memo/1085}, \doilinkjournal{Mem. Amer. Math.
  Soc.}{10.1090/memo/1085} \doilinkvynp{ \textbf{231} (2014),
  no.~1085}{10.1090/memo/1085}.

\bibitem[Con85]{Con85}
A.~Connes, \doilinktitle{\emph{Non-commutative differential
  geometry}}{10.1007/BF02698807}, \doilinkjournal{Publ. Math. Inst. Hautes
  \'Etudes Sci.}{10.1007/BF02698807} \doilinkvynp{ \textbf{62} (1985),
  41--144}{10.1007/BF02698807}. \MR{87i:58162}

\bibitem[Con94]{Connes94}
\bysame, \emph{{Noncommutative Geometry}}, Academic Press, San Diego, CA, 1994.

\bibitem[DM20]{DM20}
K.~van~den Dungen and B.~{Mesland}, \doilinktitle{\emph{{Homotopy equivalence
  in unbounded KK-theory}}}{10.2140/akt.2020.5.501}, \doilinkjournal{Ann.
  K-theory}{10.2140/akt.2020.5.501} \doilinkvynp{ \textbf{5} (2020), no.~3,
  501--537}{10.2140/akt.2020.5.501}.

\bibitem[DR16]{vdDR16}
K.~van~den Dungen and A.~{Rennie}, \doilinktitle{\emph{Indefinite {K}asparov
  modules and pseudo-{R}iemannian manifolds}}{10.1007/s00023-016-0463-z},
  \doilinkjournal{Ann. Henri Poincar{\'e}}{10.1007/s00023-016-0463-z}
  \doilinkvynp{ \textbf{17} (2016), no.~11,
  3255--3286}{10.1007/s00023-016-0463-z}.

\bibitem[Dun19a]{Bra19_vdD}
K.~van~den Dungen, \doilinktitle{\emph{A perspective from (unbounded)
  {KK}-theory}}{10.1007/s11005-019-01187-7}, 2019, {Appendix to: M. Braverman,
  \emph{Spectral flows of Toeplitz operators and bulk-edge correspondence},
  Lett. Math. Phys. \textbf{109} (2019), 2271--2289},
  \doi{10.1007/s11005-019-01187-7}.

\bibitem[Dun19b]{vdD19_Index_DS}
\bysame, \doilinktitle{\emph{{The index of generalised Dirac-Schr\"odinger
  operators}}}{10.4171/JST/283}, \doilinkjournal{J. Spectr.
  Theory}{10.4171/JST/283} \doilinkvynp{ \textbf{9} (2019),
  1459--1506}{10.4171/JST/283}.

\bibitem[Dun25]{vdD25_Callias}
\bysame, \doilinktitle{\emph{Generalised {D}irac-{S}chrödinger operators and
  the {C}allias theorem}}{10.1017/fms.2024.157}, \doilinkjournal{Forum Math.
  Sigma}{10.1017/fms.2024.157} \doilinkvynp{ \textbf{13} (2025),
  e11}{10.1017/fms.2024.157}.

\bibitem[DZ98]{DZ98}
X.~Dai and W.~Zhang, \doilinktitle{\emph{Higher spectral
  flow}}{https://doi.org/10.1006/jfan.1998.3273}, \doilinkjournal{J. Funct.
  Anal.}{https://doi.org/10.1006/jfan.1998.3273} \doilinkvynp{ \textbf{157}
  (1998), no.~2, 432--469}{https://doi.org/10.1006/jfan.1998.3273}.

\bibitem[GH96]{GH96}
E.~Guentner and N.~Higson, \doilinktitle{\emph{A note on {T}oeplitz
  operators}}{10.1142/S0129167X9600027X}, \doilinkjournal{Int. J.
  Math.}{10.1142/S0129167X9600027X} \doilinkvynp{ \textbf{07} (1996), no.~04,
  501--513}{10.1142/S0129167X9600027X}.

\bibitem[Guo21]{Guo21}
H.~Guo, \doilinktitle{\emph{Index of equivariant {C}allias-type operators and
  invariant metrics of positive scalar curvature}}{10.1007/s12220-019-00249-5},
  \doilinkjournal{J. Geom. Anal.}{10.1007/s12220-019-00249-5} \doilinkvynp{
  \textbf{31} (2021), 1--34}{10.1007/s12220-019-00249-5}.

\bibitem[GW16]{Gesztesy-Waurick16}
F.~Gesztesy and M.~Waurick, \doilinkbooktitle{\emph{The {C}allias index formula
  revisited}}{10.1007/978-3-319-29977-8}, Lecture Notes in Mathematics,
  Springer International Publishing, 2016.

\bibitem[HR00]{Higson-Roe00}
N.~Higson and J.~Roe, \emph{{Analytic K-Homology}}, Oxford University Press,
  New York, 2000.

\bibitem[HY25]{HY25}
P.~Hochs and A.~Yanes, \doilinktitle{\emph{Equivariant spectral flow for
  families of {D}irac-type operators}}{10.2140/akt.2025.10.189},
  \doilinkjournal{Ann. K-theory}{10.2140/akt.2025.10.189} \doilinkvynp{
  \textbf{10} (2025), no.~2, 189--235}{10.2140/akt.2025.10.189}.

\bibitem[Joa03]{Joa03}
M.~Joachim, \doilinktitle{\emph{Unbounded {F}redholm operators and
  {K}-theory}}{10.1142/9789812704443_0009}, \doilinkbooktitle{High-Dimensional
  Manifold Topology}{10.1142/9789812704443_0009}, World Scientific, 2003,
  pp.~177--199.

\bibitem[Kas16]{Kas16}
G.~G. Kasparov, \doilinktitle{\emph{Elliptic and transversally elliptic index
  theory from the viewpoint of {KK}-theory}}{10.4171/JNCG/261},
  \doilinkjournal{J. Noncommut. Geom.}{10.4171/JNCG/261} \doilinkvynp{
  \textbf{10} (2016), 1303--1378}{10.4171/JNCG/261}.

\bibitem[KL12]{KL12}
J.~Kaad and M.~Lesch, \doilinktitle{\emph{{A local global principle for regular
  operators in Hilbert {$C^{\ast} $}-modules}}}{10.1016/j.jfa.2012.03.002},
  \doilinkjournal{J. Funct. Anal.}{10.1016/j.jfa.2012.03.002} \doilinkvynp{
  \textbf{262} (2012), no.~10, 4540--4569}{10.1016/j.jfa.2012.03.002}.

\bibitem[KL13]{KL13}
\bysame, \doilinktitle{\emph{Spectral flow and the unbounded {K}asparov
  product}}{10.1016/j.aim.2013.08.015}, \doilinkjournal{Adv.
  Math.}{10.1016/j.aim.2013.08.015} \doilinkvynp{ \textbf{248} (2013),
  495--530}{10.1016/j.aim.2013.08.015}. \MR{3107519}

\bibitem[Kuc01]{Kuc01}
D.~Kucerovsky, \doilinktitle{\emph{A short proof of an index
  theorem}}{10.1090/S0002-9939-01-06164-0}, \doilinkjournal{Proc. Amer. Math.
  Soc.}{10.1090/S0002-9939-01-06164-0} \doilinkvynp{ \textbf{129} (2001),
  no.~12, 3729--3736}{10.1090/S0002-9939-01-06164-0}.

\bibitem[Lan95]{Lance95}
E.~Lance, \emph{Hilbert {$C^{\ast} $}-modules: A toolkit for operator
  algebraists}, Lecture note series: London Mathematical Society, Cambridge
  University Press, 1995.

\bibitem[Les05]{Les05}
M.~Lesch, \doilinktitle{\emph{The uniqueness of the spectral flow on spaces of
  unbounded self-adjoint {F}redholm operators}}{10.1090/conm/366},
  \doilinkbooktitle{Spectral geometry of manifolds with boundary and
  decomposition of manifolds}{10.1090/conm/366} (B.~Booss-Bavnbek, G.~Grubb,
  and K.~P. Wojciechowski, eds.), Contemp. Math., vol. 366, Amer. Math. Soc.,
  2005, pp.~193--224.

\bibitem[LM19]{LM19}
M.~{Lesch} and B.~{Mesland}, \doilinktitle{\emph{{Sums of regular self-adjoint
  operators in Hilbert-{$C^*$}-modules}}}{10.1016/j.jmaa.2018.11.059},
  \doilinkjournal{J. Math. Anal. Appl.}{10.1016/j.jmaa.2018.11.059}
  \doilinkvynp{ \textbf{472} (2019), 947--980}{10.1016/j.jmaa.2018.11.059}.

\bibitem[R{\aa}d94]{Rad94}
J.~R{\aa}de, \doilinktitle{\emph{Callias' index theorem, elliptic boundary
  conditions, and cutting and gluing}}{10.1007/BF02099412},
  \doilinkjournal{Commun. Math. Phys.}{10.1007/BF02099412} \doilinkvynp{
  \textbf{161} (1994), no.~1, 51--61}{10.1007/BF02099412}.

\bibitem[SS23]{SS23}
H.~{Schulz-Baldes} and T.~Stoiber, \doilinktitle{\emph{Callias-type operators
  associated to spectral triples}}{10.4171/jncg/505}, \doilinkjournal{J.
  Noncommut. Geom.}{10.4171/jncg/505} \doilinkvynp{ \textbf{17} (2023),
  527--572}{10.4171/jncg/505}.

\bibitem[Wah07]{Wah07}
C.~Wahl, \emph{On the noncommutative spectral flow}, J. Ramanujan Math. Soc.
  \textbf{22} (2007), no.~2, 135--187.

\bibitem[Wah09]{Wah09}
\emph{Homological index formulas for elliptic operators over
  {$C^{\ast}$}-algebras}, New York J. Math. \textbf{15} (2009), 319--351.

\end{thebibliography}

\providecommand{\noopsort}[1]{}
\providecommand{\bysame}{\leavevmode\hbox to3em{\hrulefill}\thinspace}
\providecommand{\MR}{\relax\ifhmode\unskip\space\fi MR }
% \MRhref is called by the amsart/book/proc definition of \MR.
\providecommand{\MRhref}[2]{%
  \href{http://www.ams.org/mathscinet-getitem?mr=#1}{#2}
}
\providecommand{\href}[2]{#2}
\providecommand{\doi}[1]{\href{https://doi.org/#1}{doi:#1}}
\providecommand{\doilinktitle}[2]{#1}
\providecommand{\doilinkbooktitle}[2]{\href{https://doi.org/#2}{#1}}
\providecommand{\doilinkjournal}[2]{\href{https://doi.org/#2}{#1}}
\providecommand{\doilinkvynp}[2]{\href{https://doi.org/#2}{#1}}
\providecommand{\eprint}[2]{#1:\href{https://arxiv.org/abs/#2}{#2}}

\end{document}